\theoremstyle{plain}
\newtheorem{theorem}{Theorem}[section]
\newtheorem{lemma}[theorem]{Lemma}
\newtheorem{corollary}[theorem]{Corollary}
\newtheorem{proposition}[theorem]{Proposition}
\theoremstyle{definition}
\newtheorem{definition}[theorem]{Definition}
\newtheorem{assumption}[theorem]{Assumption}
\theoremstyle{remark}
\newtheorem{remark}[theorem]{Remark}
\newcommand{\supp}{\operatorname{supp}}
\numberwithin{equation}{section}
\newcommand{\bR}{\mathbb{R}}
\newcommand\mA{\mathcal{A}}
\newcommand\mB{\mathcal{B}}
\newcommand\mM{\mathcal{M}}
\providecommand{\norm}[1]{\lVert#1\rVert}
\def\dashint{\operatorname%
{\,\,\text{\bf-}\kern-.98em\DOTSI\intop\ilimits@\!\!}}
\renewcommand{\vec}[1]{\boldsymbol{#1}}
\newcommand{\p}{\partial}
\newcommand{\epsi}{\varepsilon}
\newcommand{\vu}{\vec{u}}
\newcommand{\vv}{\vec{v}}
\newcommand{\vw}{\vec{w}}
\newcommand{\vf}{\vec{f}}
\newcommand{\vg}{\vec{g}}
\newcommand{\vvarphi}{\vec{\varphi}}
\begin{document}

\title[The conormal and Robin boundary value problems]{The conormal and Robin boundary value problems in nonsmooth domains satisfying a measure condition}

\author[Hongjie Dong]{Hongjie Dong}	
	
\address{Division of Applied Mathematics, Brown University, 182 George Street, Providence, RI 02912, USA}
\email{hongjie\_dong@brown.edu}

\author[Zongyuan Li]{Zongyuan Li}
	
\address{Division of Applied Mathematics, Brown University, 182 George Street, Providence, RI 02912, USA}
\email{zongyuan\_li@brown.edu}

\begin{abstract}
We consider elliptic equations and systems in divergence form with the conormal or the Robin boundary conditions, with small BMO (bounded mean oscillation) or variably partially small BMO coefficients. We propose a new class of domains which are locally close to a half space (or convex domains) with respect to the Lebesgue measure in the system (or scalar, respectively) case, and obtain the $W^1_p$ estimate for the conormal problem with the homogeneous boundary condition. Such condition is weaker than the Reifenberg flatness condition, for which the closeness is measured in terms of the Hausdorff distance, and the semi-convexity condition. For the conormal problem with inhomogeneous boundary conditions, we also assume that the domain is Lipschitz. By using these results, we obtain the $W^1_p$ and weighted $W^1_p$ estimates for the Robin problem in these domains.
\end{abstract}

\subjclass[2020]{35J25, 35J57, 35B45, 42B25, 35R05}

\keywords{the conormal problem, the Robin problem, BMO coefficients, nonsmooth domains, Muckenhoupt weights}

\maketitle

\section{Introduction}
In this paper, we consider second-order elliptic equations and systems in divergence form
\begin{equation}\label{eqn-200622-1036}
	D_i(a_{ij}^{\alpha\beta}D_j u^{\beta})=D_i f_i^\alpha+g^\alpha\quad\text{in}\,\,\Omega,
\end{equation}
where $\Omega\subset \bR^d$ is a bounded or unbounded domain. Here the unknown is a vector $(u^\alpha)_{\alpha=1}^m$ and a typical point is $x=(x_1,\ldots,x_d)\in \bR^d$. We assume the strong ellipticity condition: for some $\kappa\in(0,1]$ and for any $x\in \bR^d$,
\begin{equation}\label{eqn-200701-0518}
	a_{ij}^{\alpha\beta}(x)\xi^\alpha_i\xi^\beta_j\geq \kappa |\xi|^2\quad\forall\ \xi=(\xi_i^\alpha)\in \bR^{d\times m}\quad\text{and}\,\, |a_{ij}^{\alpha\beta}(x)|\leq \kappa^{-1}.
\end{equation}
Let $n=(n_1,\ldots,n_d)$ be the outer normal direction on $\partial\Omega$.
We are interested in two types of boundary conditions: the conormal boundary condition
\begin{equation}\label{eqn-200609-0514}
	a_{ij}^{\alpha\beta}D_j u^\beta n_i = f_i^\alpha n_i + \varphi^\alpha\quad \text{on}\,\, \partial \Omega,
\end{equation}
and the Robin boundary condition
\begin{equation}\label{eqn-200527-1002}
a_{ij}^{\alpha\beta}D_j u^\beta n_i+ \gamma^{\alpha\beta} u^\beta=f_i^\alpha n_i\quad \text{on}\,\, \partial \Omega.
\end{equation}
For the Robin problem, we impose the following non-degeneracy condition on $\gamma^{\alpha\beta}=\gamma^{\alpha\beta}(x)$: for some $\gamma_0\in (0,1)$ and $E\subset \p\Omega$ with $|E|>0$,
\begin{equation}\label{eqn-200717-0438}
	\gamma^{\alpha\beta}\eta^\alpha\eta^\beta \geq \gamma_0|\eta|^2
\,\, \text{on}\,\,E\subset \p\Omega \quad\forall \ \eta\in \bR^d,
\quad |\gamma^{\alpha\beta}|\leq \gamma_0^{-1}.
\end{equation}
For the conormal problem, we also assume the necessary compatibility condition
$$
\int_{\partial\Omega}\varphi^\alpha=  \int_\Omega g^\alpha,
$$
which is not needed for the Robin problem.

The $L_p$ estimates for elliptic equations with the conormal or the Robin boundary condition have been studied extensively in the literature. See, for instance, \cite{BW05, MR3013054,YCYY20,YYY20} and the references therein. The novelty of our results below is that the domain $\Omega$ can be very irregular.

The conormal problem for second and higher order elliptic systems in Reifenberg flat domains was studied in \cite{MR3013054}. Roughly speaking, in all small scale the boundary of a Reifenberg flat domain is trapped in a thin disc. The leading coefficients are assumed to be of variably partially small BMO, i.e., they are merely measurable in one direction and have small mean oscillation in the orthogonal directions in each small ball, with the direction allowed to depend on the ball. We note that such class of coefficients was first introduced in \cite{MR2540989}. We also refer the reader to \cite{BW05} for an earlier result about scalar equations with small BMO coefficients, and \cite{G12} for elliptic equations and systems with symmetric small BMO coefficients in Lipschitz domain when $p$ is in restricted ranges, as well as \cite{YCYY20} for a weighted estimate for scalar elliptic equations with symmetric small BMO coefficients in bounded semi-convex domains.

This paper is motivated by recent work \cite{YYY20}, where among other results the authors obtained the weighted $W^1_p$ estimates for the Robin problem of scalar elliptic equations with symmetric and small BMO coefficients in bounded $C^1$ or semi-convex domains in $\bR^d,d\ge 3$. Recall that a domain is semi-convex if and only if it is Lipschitz and satisfies the uniform exterior ball condition. The proof in \cite{YYY20} relies on the Moser iteration as well as a reverse H\"older's inequality for $Du$. Especially, the authors used a reverse H\"older's inequality for the non-tangential maximal function of $Du$ on $\partial\Omega$ for a harmonic function $u$ satisfying the Robin boundary condition established in \cite[Theorems 1.2 and 3.2]{YYY18}. The proofs of the latter two theorems are based on the estimates of the fundamental solution of the Laplace operator and a solvability result obtained in \cite[Theorem 4.3]{MR2038145}. Another important ingredient of the proof is \cite[Theorems 3.1]{YYY20}, which is in the spirit of a level set argument originally used in \cite{MR1486629}. See also, for example, in \cite{BW05,S07,MR3013054}.

The objective of this paper is twofold. First, we generalize the results in \cite{BW05, MR3013054, YCYY20} for elliptic equations and systems with the conormal boundary conditions to more general domains. For elliptic systems with the homogeneous conormal boundary condition, we assume that in small scale $\partial\Omega$ is locally close to hyperplanes with respect to the Lebesgue measure in $\bR^d$. As to the coefficients, similar to \cite{MR3013054}, we impose the variably partially small BMO condition. See Assumption \ref{ass-200604-1252} for details.
Our condition on $\Omega$ can be even weaker for scalar equations with symmetric coefficients: in small scale it is locally close to convex domains with respect to the Lebesgue measure. However, in this case, we additionally impose the small BMO condition on the leading coefficients. See the $W^1_p$ estimates and solvability in Theorems \ref{thm-200609-0520} and \ref{thm-200617-1154}. Such conditions are weaker than the Reifenberg flat condition used in \cite{BW05, MR3013054}, where the flatness of $\partial\Omega$ is measured in terms of the Hausdorff distance. For the conormal problem with inhomogeneous boundary conditions, we assume that the domain is also Lipschitz so that the trace and extension operators are well defined. See Theorems \ref{thm-200609-0525} and \ref{thm-200618-1200}. This is also weaker than the $C^1$ and the semi-convexity conditions used, for example, in \cite{Y16,YCYY20}. In the proof, to deal with the nonsmooth boundary $\partial\Omega$ we adapted a reflection argument used in \cite{MR3013054} with a new observation that to estimate certain error terms arising from the reflection, it suffices to have a small measure condition instead of the flatness condition. For scalar equations, we also exploit a local Lipschitz estimate in \cite{MR3168044} for harmonic functions satisfying the Neumann boundary condition in convex domains.

Second, we extend the result in \cite{YYY20} for the Robin problem to elliptic equations and systems with small BMO or variably partially small BMO coefficients in domains satisfying the same conditions as in the conormal case. See Theorems \ref{thm-200527-1035} and \ref{thm-200622-1138}. Similar to \cite{YYY20}, a key ingredient of our proof is a reverse H\"older's inequality for the Robin problem with homogeneous right-hand side. See Lemma \ref{lemma1}. However, our proof of such reverse H\"older's inequality is very different from that in \cite{YYY20}. In particular, we do not use the Moser iteration which is not valid for elliptic systems, or any non-tangential maximal function estimates.
Instead, we appeal to our results for the conormal problem (Theorems \ref{thm-200609-0525} and \ref{thm-200618-1200}) and use a delicate decomposition and localization argument. Given the corresponding results for the Dirichlet problem (see, for instance, \cite{DK11b}),
it seems to us that for the conormal or Robin problem the domain under consideration can be less regular.

For simplicity, in this paper we choose not to consider lower-order terms. Our results can be readily extended to elliptic systems of the form
$$\left\{
\begin{aligned}
D_i(a_{ij}^{\alpha\beta}D_j u^{\beta}+b_i^{\alpha\beta}u^\beta)+\tilde b_i^{\alpha\beta}D_iu^\beta+c^{\alpha\beta}u^\beta-\lambda u^\alpha&=D_i f_i^\alpha+g^\alpha\quad\text{in}\,\,\Omega,\\
(a_{ij}^{\alpha\beta}D_j u^\beta+b_i^{\alpha\beta}u^\beta)n_i &= f_i^\alpha n_i + \varphi^\alpha\quad\text{on}\,\,\p\Omega,
\end{aligned}\right.
$$
when $b_i^{\alpha\beta}$, $\tilde b_i^{\alpha\beta}$, and $c^{\alpha\beta}$ are bounded and $\lambda>0$ is sufficiently large constant. In the case of scalar equations, we can take $\lambda=0$ under proper sign conditions. %$D_i b_i+c\le 0$ in $\Omega$ in the weak sense.
See \cite[Theorems 2.4 and 2.6]{MR3013054} for details. Similar extensions can be made to the results about the Robin problem.

The remaining part of paper is organized as follows. In the next section, we state two groups of the main results: first for elliptic systems and then for scalar equations. In Section \ref{sec3}, we prove the estimates and solvability for the conormal problem of elliptic systems. Section \ref{sec4} is devoted to the proof of Theorem \ref{thm-200527-1035}, which is regarding the estimates and solvability for the Robin problem of elliptic systems. In Section \ref{sec5}, we deal with scalar elliptic equations with symmetric coefficients in more general domains.

\noindent{\bf Notation.}
Suppose $\Omega\subset\bR^d$ is a domain and $p\in (1,\infty)$. For a point $x=(x_1,\ldots,x_d) = (x',x_d)$, we define
$$
\Omega_r(x)=\Omega\cap B_r(x),\quad B_r'(x') = \{y':|y'-x'|<r\}.
$$
We use the following notation for the average in terms of the Lebesgue measure: for a set $B$ with finite Lebesgue measure $|B|$, let
\begin{equation*}
	(f)_{B} = \fint_\Omega f = \frac{1}{|B|}\int_B f.
\end{equation*}

Define the space $L^1_p$ to be the collection of all functions $u\in L_{1,\text{loc}}(\Omega)$ with $Du\in L_p(\Omega)$, equipped with the semi-norm $\norm{Du}_{L_p(\Omega)}$. We also consider its quotient space $\dot{L}^1_p=L^1_p/c$, for which we identify two functions that only differ by a constant. Then $\dot{L}^1_p$ equipped with $\norm{Du}_{L_p(\Omega)}$ is a Banach space. For details, see \cite[Section~1.1]{MR2777530}.
It is worth mentioning that, when the Poincar\'e inequality
$$
\norm{u-(u)_\Omega}_{L_p(\Omega)} \leq N \norm{Du}_{L_p(\Omega)}
$$
holds, the space $\dot{L}^1_p$ can be identified with the usual Sobolev space
$$W^1_p(\Omega)\cap \{(u)_\Omega =0\}.$$

The weighted Sobolev spaces will also be discussed in this paper. A weight $\omega$ is a non-negative, locally integrable function. For $p\in(1,\infty)$, $\omega$ is said to be of Muckenhoupt $A_p$ class if
\begin{equation*}
	[\omega]_{A_p} := \sup_{B\subset \bR^d} \left(\fint_B \omega\right)\left(\fint_B \omega^{-1/(p-1)}\right)^{p-1}<\infty,
\end{equation*}
where the sup is taken over all balls. For any measurable set $E$ and a measurable function $u$ defined on $\Omega$, we denote
\begin{equation*}
	\omega(E):= \int_E \omega(x)\,dx,\quad \norm{u}_{L_{p,\omega}} := \left(\int_\Omega u(x)^p\omega(x)\,dx\right)^{1/p}.
\end{equation*}
The weighted Sobolev space $W^1_{p,\omega}$ is defined similarly. The weighted spaces with the weight $\omega=1$ are the same with the usual $L_p$ and $W^1_p$ spaces.

For a number $p\in(1,\infty)$, we denote
\begin{equation*}
	p^*:=\begin{cases}
		\frac{pd}{p+d}\quad &p\in (\frac{d}{d-1},\infty),\\
		1+\epsi\quad &p = \frac{d}{d-1},\\
		1 \quad &p \in (1, \frac{d}{d-1}),
	\end{cases}
\end{equation*}
where $\epsi$ can be any fixed positive number.

For elliptic systems, we sometimes use the following notation for vector-valued functions
\begin{equation*}
	\vu:=(u^\alpha)_{\alpha=1}^m,\quad\vf_i:=(f_i^\alpha)_{\alpha=1}^m,
\quad \vg:=(g^\alpha)_{\alpha=1}^m,
\quad\text{and}\quad \vec{\varphi}:=(\varphi^\alpha)_{\alpha=1}^m.
\end{equation*}

%We consider the following problem
%\begin{equation}\label{eqn-200609-0514}
%\begin{cases}
%D_i(a_{ij}^{\alpha\beta}D_j u^{\beta})=D_i f_i^\alpha+g^\alpha\quad&\text{in}\quad \Omega,\\
%a_{ij}^{\alpha\beta}D_j u^\beta n_i = f_i^\alpha n_i + \varphi^\alpha\quad &\text{on}\quad \partial \Omega,
%\end{cases}
%\end{equation}

\section{Main results}
\subsection{Results for elliptic systems}
For elliptic systems, we consider domains that are $\theta$-close to hyperplanes (Assumption \ref{ass-200604-1252} (b) below) and variably partially BMO coefficients--those coefficients that are measurable in the ``almost normal'' direction while BMO in the other variables.

In Assumption \ref{ass-200604-1252} as well as Assumptions \ref{ass-200617-1149} and \ref{ass-200519-1052} below, $R_0$ and $M$ are fixed constants, and $\theta$ is a small parameter to be determined later.
\begin{assumption}[$\theta$]\label{ass-200604-1252}
	There exists a constant $R_0\in(0,1]$ such that the following hold.
	\begin{enumerate}[wide, labelwidth=0pt, labelindent=0pt]
		\item For any $x\in \Omega$ and any $r\in(0,R_0]$ such that $B_r(x)\subset\Omega$, there is an orthogonal coordinate system $y=(y',y_d)$ depending on $x$ and $r$ such that in the new coordinate system $x=0$ and
		\begin{equation}\label{eqn-200604-1228}
		\fint_{B_r}\left|a_{ij}^{\alpha\beta}(y',y_d)-\fint_{B_r'}a_{ij}^{\alpha\beta}(z',y_d)\,dz'\right|\,dy \leq \theta.
		\end{equation}
		\item For any $x\in\p\Omega$ and $r\in(0,R_0]$, besides \eqref{eqn-200604-1228} we also require that in the new coordinate system,
		\begin{align*}
			&\Omega^+_r:=\{y:\ y'\in B_r',\ 0<y_d<Mr\}\subset \Omega\cap (B'_{r}\times (-Mr,Mr)),\notag\\
			&\Omega\cap ((-\infty,-Mr)\times B'_{r})=\emptyset,\notag\\
			&|\Omega^-_r|\le \theta |B_r|,\quad\text{where}\,\, \Omega^-_r:=\{y'\in B_r',\ -Mr<y_d<0,\ y\in \Omega\},
		\end{align*}
where $M\geq 1$ is a constant which can be chosen to be independent of $x$.
	\end{enumerate}
\end{assumption}
Assumption \ref{ass-200604-1252} (b) is a generalization of the usual small Reifenberg flat condition. Here instead of the closeness in terms of the Hausdorff distance, the closeness is measured in terms of the Lebesgue measure. Note that the Reifenberg flat condition (see, for instance,  \cite{DK11b,MR3013054}) implies Assumption \ref{ass-200604-1252} (b).

\begin{definition}
We say that a function $\vu\in \dot L^1_p(\Omega),p\in (1,\infty),$ is a weak solution to the problem \eqref{eqn-200622-1036}-\eqref{eqn-200609-0514} if for any $\vec{\phi}\in C^\infty(\overline{\Omega})$,
\begin{equation}\label{eqn-200703-0330}
	-\int_\Omega a_{ij}^{\alpha\beta}D_ju^\beta D_i\phi^\alpha = -\int_\Omega f_i^\alpha D_i\phi^\alpha + \int_\Omega g^\alpha\phi^\alpha - \int_{\p\Omega}\varphi^\alpha\phi^\alpha.
\end{equation}
\end{definition}

Our first result is an a priori estimate for the problem \eqref{eqn-200622-1036}-\eqref{eqn-200609-0514} with zero non-divergence source term $\vg=0$ and homogeneous boundary condition $\vec{\varphi}=0$.
\begin{theorem}\label{thm-200609-0520}
Suppose that $\Omega$ is a domain in $\bR^d$ (not necessarily bounded), $p\in (1,\infty)$, and $\vu\in \dot{L}^1_p(\Omega;\bR^m)$ is a weak solution to \eqref{eqn-200622-1036}-\eqref{eqn-200609-0514} with $\vf_i\in L_p(\Omega)$ and $\vg= \vvarphi= 0$. Then we can find a sufficiently small $\theta_0$ depending on $(d,m,\kappa,M,p)$, such that if Assumption \ref{ass-200604-1252} ($\theta_0$) is satisfied, then we have
	\begin{equation}\label{eqn-200619-1150}
	\norm{D\vu}_{L_{p}(\Omega)}\leq N\left(R_0^{d(1/p-1)}\norm{D\vu}_{L_1(\Omega)} + \norm{\vf_i}_{L_{p}(\Omega)}\right),
	\end{equation}
where $N=N(d,m,\kappa,M,p)$.
\end{theorem}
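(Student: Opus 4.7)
The overall strategy follows the Dong--Krylov mean oscillation/level-set scheme used for the Reifenberg-flat case in \cite{MR3013054}: one proves a pointwise control of the sharp maximal function of $D\vu$ by $\theta^{\sigma}$ times the maximal function of $|D\vu|$ plus the maximal function of $|\vf|$, and then invokes the Fefferman--Stein inequality together with the Hardy--Littlewood maximal function theorem to recover \eqref{eqn-200619-1150}. The main ingredient is a mean oscillation bound at every scale $r\le R_0$ on balls $B_r(x_0)$, split into two cases according to whether $B_r(x_0)$ is an interior ball or a boundary ball.

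\textbf{Interior case.} For $B_{4r}(x_0)\subset\Omega$, fix the coordinate system provided by Assumption \ref{ass-200604-1252}(a) and write $\vu=\vw+\vv$ on $B_{2r}$, where $\vw$ solves the homogeneous divergence-form system on $B_{2r}$ with leading coefficients $\bar a_{ij}^{\alpha\beta}(y_d):=\fint_{B'_{2r}}a_{ij}^{\alpha\beta}(z',y_d)\,dz'$, equal boundary data as $\vu$, and right-hand side arising from $\vf-\bar a\cdot D\vu$. By the partial BMO hypothesis the error $\vv$ satisfies an interior $L_2$ Caccioppoli-type estimate of size $\theta$ times the local energy of $\vu$ plus $\|\vf\|_{L_2}$, while $D\vw$ enjoys an interior $L_\infty$ estimate (coefficients depending on a single variable allow one to differentiate in $y'$) that gives mean oscillation control of $D\vu$ on $B_r$.

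\textbf{Boundary case.} For $x_0\in\overline\Omega$ near $\partial\Omega$ choose $y\in\partial\Omega$ close to $x_0$ and apply Assumption \ref{ass-200604-1252}(b) at $y$ to flatten the picture to the coordinate system with $y=0$, so that on scale $r$ the domain is sandwiched between a half-ball and a full ball, with the "bad" region $\Omega_r^-\cup(\Omega_r^+)^c\cap\Omega_r$ having measure $\le 2\theta|B_r|$. Define $\vw$ as the solution of the conormal problem for the partially averaged coefficients $\bar a_{ij}^{\alpha\beta}(y_d)$ on the flat half-ball $B_r^+$, extended by reflection across $\{y_d=0\}$ to a full ball equation for which $D\vw$ is again uniformly bounded. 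The error $\vv=\vu-\vw$ satisfies a system with source terms supported partly on the BMO defect (giving $\theta\|D\vu\|_{L_2}$ as before) and partly on a discrepancy term from the mismatch between $\Omega_r$ and $B_r^+$, concentrated on the bad set. Here is the new observation flagged in the introduction: this discrepancy term is estimated in $L_q$ for some $q<p$ by H\"older's inequality against $|\text{bad set}|^{1/q-1/p}\lesssim\theta^{1/q-1/p}$ times $\|D\vu\|_{L_p}$, which is exactly where the Lebesgue-measure closeness (rather than Hausdorff) is used.

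\textbf{Self-improvement and closing the argument.} Since the preceding step needs $D\vu$ in an $L_q$ with $q$ slightly larger than the ambient exponent, the main technical obstacle is to promote the a priori $L_1$-control of $D\vu$ to a Meyers/Gehring-type reverse H\"older inequality on every $B_r(x_0)\cap\Omega$; this is standard from Caccioppoli and the Sobolev--Poincar\'e inequality on good subballs (away from the bad set, the domain is genuinely a half-ball, so the usual trace/extension on the flat boundary is available), and the smallness of $|\Omega_r^-|$ ensures the reverse H\"older constant is uniform in $r$ and $x_0$. Combining the interior and boundary mean oscillation estimates yields a bound of the form
\begin{equation*}
(|D\vu|^{q})^{\sharp}(x)\le N\theta^{\sigma}\,\mM(|D\vu|^{q})(x)+N\,\mM(|\vf|^{q})(x)
\end{equation*}
for some $q\in(1,p)$ and $\sigma>0$. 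Choosing $\theta_0$ small enough that $N\theta_0^{\sigma}$ can be absorbed after applying Fefferman--Stein (once $\|D\vu\|_{L_p}$ is known to be finite; this is first justified by a standard approximation/truncation or by working with $\vu$ in $L_1^p$ so that the first term on the right of \eqref{eqn-200619-1150} supplies the needed finite control), one obtains the claimed $W^1_p$ estimate. The hardest step is the boundary reflection/error control just described; everything else is fairly routine once the measure-based error bound is in hand.
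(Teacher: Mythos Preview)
Your overall architecture---local decomposition into a ``regular'' part with an $L_\infty$ gradient bound and an ``error'' part whose size is $\theta^\sigma$, then a maximal-function argument---matches the paper, and you correctly isolate the key new point: the discrepancy coming from $\Omega_r^-$ is handled by H\"older against $|\Omega_r^-|^{1/q-1/p}\le\theta^{1/q-1/p}$, which is exactly where the Lebesgue-measure closeness replaces the Hausdorff closeness.

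However, your ``self-improvement'' paragraph is both unnecessary and unjustified, and this is a genuine gap. The theorem is an \emph{a priori} estimate: you are given $\vu\in\dot L^1_p$, so $D\vu\in L_p$ from the start. In the H\"older step you need $D\vu\in L_q$ on $\Omega_r(x)$ for some $q$ \emph{strictly smaller} than $p$ (not larger than any ``ambient'' exponent), and this is free by Jensen on the finite-measure set $\Omega_r(x)$. The paper simply fixes $\tilde p<q<p$ (concretely $\tilde p=(p+2)/3$, $q=(p+1)/2$) and runs the argument; no Gehring lemma is invoked. Worse, a Meyers/Gehring step would require a local Sobolev--Poincar\'e inequality on $\Omega_r(x)$, which is \emph{not} part of Assumption~\ref{ass-200604-1252}; the paper explicitly adds it as an extra hypothesis only in Remark~\ref{rmk-200619-1150}(b) for solvability, not for the a priori bound. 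So your proposed route leans on a structural property of $\Omega$ you do not have.

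A secondary point: the paper does not use the Fefferman--Stein sharp function. It runs a Caffarelli--Peral level-set argument (``crawling of ink spots''), comparing $\mathcal A(s)=\{\mathcal M(U^{\tilde p})^{1/\tilde p}\ge s\}$ with $\mathcal B(s)=\{\mathcal M(U^q)^{1/q}+\theta^{-1/\tilde p+1/q}\mathcal M(F^{\tilde p})^{1/\tilde p}\ge s\}$ and integrating in $s$. This route naturally produces the lower-order term $R_0^{d(1/p-1)}\|D\vu\|_{L_1(\Omega)}$ on the right of \eqref{eqn-200619-1150}, because the decomposition is only available for $r<R_0$ and the weak-$(1,1)$ bound handles $s\le s_0\sim R_0^{-d/\tilde p}\|U\|_{L_{\tilde p}}$. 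Your Fefferman--Stein sketch is vague about where that $L_1$ term comes from; you would need to localize the sharp function to scales $\le R_0$ and account for large scales separately, which you have not done.
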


We give the following remark on the solvability of \eqref{eqn-200622-1036}-\eqref{eqn-200609-0514} with $\vg=\vvarphi=0$.
\begin{remark}\label{rmk-200619-1150}
	\begin{enumerate}[wide, labelwidth=0pt, labelindent=0pt]
		\item The $\dot{L}^1_2$ unique solvability follows from the $L_2$ estimate by testing the equation by $u$ and the Lax-Milgram lemma.
		\item Under the assumptions of Theorem \ref{thm-200609-0520}, if we further assume that $|\Omega|<\infty$ and the local Sobolev-Poincar\'e inequality with index $(2,2-\epsi)$ holds for some $\epsi>0$, i.e., there exists some constant $N_{\Omega,\epsi}$, such that for any $v\in L_2\cap\dot{L}^1_{2-\epsi}$, all small scales $r<R_0$, and $x\in \overline{\Omega}$,
		\begin{equation}\label{eqn-200619-0355}
		\left(\fint_{\Omega_r(x)}\Big|v-\fint_{\Omega_r(x)}v\Big|^2\right)^{1/2} \leq N_{\Omega,\epsi} r \left(\fint_{\Omega_{2r}(x)}|Dv|^{2-\epsi}\right)^{1/(2-\epsi)},
		\end{equation}
then we have the unique $\dot{L}^1_p$ solvability for any $p\in (1,\infty)$. Furthermore, the lower-order term $\norm{D\vu}_{L_{1}(\Omega)}$ on the right-hand side of \eqref{eqn-200619-1150} can be dropped. Indeed, from the local energy estimate (the Caccioppoli inequality), \eqref{eqn-200619-0355}, and Gehring's lemma, we get a local higher integrability of $D\vu$ for $\dot{L}^1_2$-solution, which together with the proof of Theorem \ref{thm-200609-0520} below shows that, for any $p\in (2,\infty)$, the $\dot{L}^1_2$-solution in (a) is also in $\dot{L}^1_p$. The solvability when $p\in (1,2)$ then follows from a duality argument. For more details, see Section \ref{sec-200705-1036}.
	\end{enumerate}

The so-called Jones flat domains in \cite{MR631089} are local $W^1_p$ extension domains for any $p\in[1,\infty)$, which further implies \eqref{eqn-200619-0355}. In this case, we can take the index as in the Sobolev-Poincar\'e inequality: $2-\epsi = 2^*$. For a more concrete example, see \cite[Theorem~3.3]{MR3809039} for the Reifenberg flat domain.
\end{remark}

Now we turn to a discussion on bounded Lipschitz domains.
\begin{definition}
A domain $\Omega\subset \bR^d$ is called a Lipschitz domain (with Lipschitz constant $M$), if there exits a constant $R_0>0$, such that for and any $x\in\p\Omega$, we can find a Lipschitz function $\psi:\bR^{d-1}\rightarrow \bR$ with $|D\psi|\leq M$ and a coordinate system $y=(y',y_d)$, in which
	\begin{equation*}
		\Omega_{R_0}(x) = \{y: y_d>\psi(y')\}.
	\end{equation*}
\end{definition}
Note that any Lipschitz domain is an $W^1_p$-extension domain for $p\in[1,\infty]$ and the trace operator $W^1_p(\Omega)\rightarrow W^{1-1/p}_p(\p\Omega)$ is well defined. In this case, we consider the solvability in $W^1_p(\Omega)$ of \eqref{eqn-200622-1036}-\eqref{eqn-200609-0514} with general nonzero $\vg \in L_{p^*}(\Omega)$ and $\vvarphi\in W^{-1/p}_p(\partial\Omega)$ satisfying the compatibility condition \eqref{eqn-200717-0438}. Here $W^{-1/p}_p(\partial\Omega)$ denotes the dual space of $W^{1/p}_{p'}(\partial\Omega)$, where $p'=p/(p-1)$.
\begin{theorem}\label{thm-200609-0525}
	Suppose that $\Omega$ is a bounded Lipschitz domain and $p\in(1,\infty)$. Then we can find a sufficiently small $\theta_0>0$ depending on $(d,m,\kappa,M,p)$, such that if Assumption \ref{ass-200604-1252} ($\theta_0$) is satisfied, the $W^1_p$ well-posedness holds for the problem \eqref{eqn-200622-1036}-\eqref{eqn-200609-0514}. Namely, for any $\vf_i\in L_p(\Omega)$, $\vg\in L_{p^*}(\Omega)$, and $\vvarphi\in W^{-1/p}_p(\p\Omega)$ satisfying \eqref{eqn-200717-0438}, there exists a unique solution $\vu\in W^1_p(\Omega;\bR^m)$ with mean zero, satisfying
	\begin{equation*}%\label{eqn-200611-1103}
	\norm{\vu}_{W^1_{p}(\Omega)} \leq N\sum_i \norm{\vf_i}_{L_{p}(\Omega)} + N\norm{\vg}_{L_{p^*}(\Omega)} + N\norm{\vvarphi}_{W^{-1/p}_p(\p\Omega)},
	\end{equation*}
where $N=N(d,m,\kappa,M,p,R_0,|\Omega|)$.
\end{theorem}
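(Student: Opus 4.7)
The plan is to reduce Theorem \ref{thm-200609-0525} to the homogeneous case already handled by Theorem \ref{thm-200609-0520} together with Remark \ref{rmk-200619-1150}(b), by absorbing the data $\vg$ and $\vvarphi$ into the divergence term $\vf_i$.

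For the reduction, since $\Omega$ is a bounded Lipschitz domain, the trace $\tr : W^1_{p'}(\Omega) \to W^{1/p}_{p'}(\p\Omega)$ is continuous, and the Sobolev embedding $W^1_{p'}(\Omega) \hookrightarrow L_{(p^*)'}(\Omega)$ holds (or into $L_q$ for any $q<\infty$ when $p'\ge d$). Consequently, the linear functional
\begin{equation*}
T(\vphi) := -\int_\Omega g^\alpha \phi^\alpha + \int_{\p\Omega}\varphi^\alpha \phi^\alpha
\end{equation*}
is bounded on $W^1_{p'}(\Omega;\bR^m)$ with norm controlled by $\norm{\vg}_{L_{p^*}(\Omega)} + \norm{\vvarphi}_{W^{-1/p}_p(\p\Omega)}$. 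The compatibility condition $\int_{\p\Omega}\varphi^\alpha = \int_\Omega g^\alpha$ forces $T$ to vanish on constant vector fields, so it descends to the quotient $W^1_{p'}(\Omega;\bR^m)/\{\text{constants}\}$. Lipschitz boundedness supplies Poincar\'e's inequality, making $\norm{D\vphi}_{L_{p'}}$ equivalent to the full $W^1_{p'}$-norm on this quotient, and the gradient map identifies the quotient isometrically (up to this equivalence) with a closed subspace of $L_{p'}(\Omega)^{md}$. Hahn-Banach extension combined with the duality $(L_{p'})^* = L_p$ then produces $\tilde{\vf}_i \in L_p(\Omega;\bR^m)$ with
\begin{equation*}
\sum_i \norm{\tilde{\vf}_i}_{L_p(\Omega)} \le N\bigl(\norm{\vg}_{L_{p^*}(\Omega)} + \norm{\vvarphi}_{W^{-1/p}_p(\p\Omega)}\bigr)
\end{equation*}
such that $T(\vphi) = \int_\Omega \tilde f_i^\alpha D_i \phi^\alpha$ for every $\vphi \in W^1_{p'}(\Omega;\bR^m)$. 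Substituting into \eqref{eqn-200703-0330} rewrites the weak formulation as the homogeneous conormal problem with data $\vf_i + \tilde{\vf}_i$ and $\vg = \vvarphi = 0$.

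For this reduced problem I would apply Remark \ref{rmk-200619-1150}(b). The hypothesis $|\Omega| < \infty$ is immediate, and the local Sobolev-Poincar\'e inequality \eqref{eqn-200619-0355} follows from the Lipschitz property: each piece $\Omega_r(x) = \Omega \cap B_r(x)$ with $r < R_0$ and $x \in \overline{\Omega}$ is a bounded Lipschitz subdomain whose constants are uniform in $x$ and $r$, on which the classical Sobolev-Poincar\'e inequality at any exponent $q \in (2d/(d+2), 2]$ is valid. Remark \ref{rmk-200619-1150}(b) then yields $\dot{L}^1_p$-solvability and, crucially, allows the lower-order term $\norm{D\vu}_{L_1(\Omega)}$ in \eqref{eqn-200619-1150} to be dropped, so that
\begin{equation*}
\norm{D\vu}_{L_p(\Omega)} \le N\Bigl(\sum_i \norm{\vf_i}_{L_p(\Omega)} + \norm{\vg}_{L_{p^*}(\Omega)} + \norm{\vvarphi}_{W^{-1/p}_p(\p\Omega)}\Bigr).
\end{equation*}

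Finally, bounded Lipschitz domains satisfy the global Poincar\'e inequality $\norm{\vu - (\vu)_\Omega}_{L_p(\Omega)} \le N\norm{D\vu}_{L_p(\Omega)}$ for every $p \in (1,\infty)$, so the unique mean-zero representative $\vu$ lies in $W^1_p(\Omega;\bR^m)$ and satisfies the claimed estimate. Uniqueness among mean-zero $W^1_p$ solutions follows by applying the above gradient estimate to the difference of two solutions with the same data. I expect the main technical obstacle to be not any single step in the reduction but the careful verification that the Lipschitz pieces $\Omega_r(x)$ admit Sobolev-Poincar\'e constants uniform in $r < R_0$ and $x \in \overline{\Omega}$, which is what legitimizes the use of Remark \ref{rmk-200619-1150}(b); this uniformity is precisely where the Lipschitz hypothesis is genuinely used beyond Assumption \ref{ass-200604-1252}($\theta_0$) alone, the latter not even sufficing to define a trace on $\p\Omega$.
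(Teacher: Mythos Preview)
Your proposal is correct and follows essentially the same route as the paper's own proof: reduce $\vg$ and $\vvarphi$ to divergence-form data via the Hahn--Banach/Riesz representation argument on $\dot L^1_{p'}(\Omega)$ (the paper invokes \cite[Theorem~II.8.2]{MR2808162} for this), then apply Theorem~\ref{thm-200609-0520} together with Remark~\ref{rmk-200619-1150}(b), using that bounded Lipschitz domains are $W^1_p$-extension domains and hence satisfy the required local Sobolev--Poincar\'e inequality.
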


The weak formulation of the Robin problem \eqref{eqn-200622-1036}-\eqref{eqn-200527-1002} is given as follows.
\begin{definition}
We say that a function $\vu\in \dot W^1_p(\Omega),p\in (1,\infty),$ is a weak solution to the problem \eqref{eqn-200622-1036}-\eqref{eqn-200527-1002} if for any $\vec{\phi}\in C^\infty(\overline{\Omega})$,
\begin{equation*}
	-\int_\Omega a_{ij}^{\alpha\beta}D_ju^\beta D_i\phi^\alpha - \int_{\p\Omega}\gamma^{\alpha\beta}u^\beta\phi^\alpha = -\int_\Omega f_i^\alpha D_i\phi^\alpha + \int_\Omega g^\alpha\phi^\alpha .
\end{equation*}
\end{definition}

For the Robin problem, we prove the following result.
\begin{theorem}\label{thm-200527-1035}
Suppose that $\Omega$ is a bounded Lipschitz domain, $p\in(1,\infty)$, and $\omega\in A_p$. We can find a sufficiently small $\theta_0$ depending on $(d,m,\kappa,M,p,[\omega]_{A_p})$, such that if Assumption \ref{ass-200604-1252} ($\theta_0$) is satisfied, then we have the $W^1_{p,\omega}$ well-posedness for \eqref{eqn-200622-1036}-\eqref{eqn-200527-1002}: for any $\vf_i\in L_{p,\omega}$ and $\vg\in L_{q,\omega^a}$, there exists a unique solution $\vu\in W^1_{p,\omega}(\Omega;\bR^m)$ satisfying
	\begin{equation*}
		\norm{\vu}_{W^1_{p,\omega}(\Omega)}\leq N\sum_i\norm{\vf_i}_{L_{p,\omega}(\Omega)} + N \norm{\vg}_{L_{q,\omega^a}(\Omega)},
	\end{equation*}
where the constant $N=N(d,m,\kappa,M,p,R_0,\gamma_0,\Omega,E,[\omega]_{A_p})$. The exponents $q$ and $a$ can be taken from either of the following two cases.
	\begin{enumerate}[wide, labelwidth=0pt, labelindent=0pt]
		\item $q= p_\omega := \frac{1}{1/p + 1/(\hat{p})^* - 1/\hat{p}}(<p)$ and $a = p_\omega/p$, where $\hat{p}>1$ depends on $[\omega]_{A_p}$ and can be any constant satisfying Lemma \ref{lem-200615-1006}(b), which is usually close to $1$ for general $\omega\in A_p$.
		\item $q=\frac{dp}{d+p-1}$ and $a=\frac{d-1}{d+p-1}$.
	\end{enumerate}
\end{theorem}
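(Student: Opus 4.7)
The plan is to reduce the theorem to an a priori weighted $W^1_{p,\omega}$ estimate and then establish it by a level-set decomposition in the spirit of \cite{MR1486629,BW05,MR3013054}. Unique $W^1_2$ solvability follows from the Lax-Milgram lemma: the bilinear form
$$
B[\vu,\vphi]=\int_\Omega a_{ij}^{\alpha\beta}D_j u^\beta D_i\phi^\alpha+\int_{\p\Omega}\gamma^{\alpha\beta}u^\beta\phi^\alpha
$$
is bounded on $W^1_2(\Omega)$ and coercive once \eqref{eqn-200701-0518} and \eqref{eqn-200717-0438} are combined with a Robin-type Poincar\'e inequality $\|\vu\|_{L_2(\Omega)}\leq N(\|D\vu\|_{L_2(\Omega)}+\|\vu\|_{L_2(E)})$ available on bounded Lipschitz domains with $|E|>0$. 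Once the a priori weighted estimate is in place, a standard continuity/approximation argument, in which $\vf_i$ and $\vg$ are approximated by smooth data, will lift $W^1_2$ solvability to well-posedness in $W^1_{p,\omega}$.

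The heart of the proof is a reverse H\"older inequality (Lemma \ref{lemma1}) for weak solutions of the \emph{homogeneous} Robin problem $\vf_i=\vg=0$ on $\Omega_{2r}(x_0)$, $x_0\in\overline\Omega$:
$$
\Big(\fint_{\Omega_r(x_0)}|D\vu|^p\Big)^{1/p}\leq N\Big(\fint_{\Omega_{2r}(x_0)}|D\vu|^{p_0}\Big)^{1/p_0}
$$
for some $p_0<p$. Rather than using the Moser iteration and non-tangential maximal function machinery of \cite{YYY20} (which breaks for systems), I would derive it from the conormal estimates of Theorems \ref{thm-200609-0525} and \ref{thm-200609-0520}. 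The key observation is that the homogeneous Robin condition can be rewritten as $a_{ij}^{\alpha\beta}D_ju^\beta n_i=-\gamma^{\alpha\beta}u^\beta$, i.e., a conormal condition with data $\vvarphi=-\gamma\vu$. Introducing a cutoff $\zeta$ with $\zeta\equiv 1$ on $B_{r/2}$ and $\supp\zeta\subset B_r$, and applying Theorem \ref{thm-200609-0525} to $(\vu-\vec c)\zeta$ with $\vec c=\fint_{\Omega_r}\vu$, yields
$$
\|D\vu\|_{L_p(\Omega_{r/2})}\leq N r^{-1}\|\vu-\vec c\|_{L_p(\Omega_r)}+N\|\gamma\vu\|_{W^{-1/p}_p(\p\Omega\cap B_r)},
$$
and the Lipschitz trace theorem converts the last term into another $L_p$ expression in $\vu-\vec c$ on $\Omega_r$. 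To supply the initial $p_0<2$ I would first Gehring-improve the Caccioppoli bound
$$
\int_{\Omega_r}|D\vu|^2\zeta^2\leq N\int_{\Omega_r}|\vu-\vec c|^2|D\zeta|^2+N\int_{\p\Omega\cap B_r}|\gamma||\vu-\vec c|^2\zeta^2,
$$
then iterate the conormal step to reach any $p<\infty$.

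With Lemma \ref{lemma1} in hand, the level-set argument parallels the conormal proof of Theorem \ref{thm-200609-0525}. On each small ball one splits $\vu=\vw+\vv$, with $\vw$ solving the frozen-coefficient homogeneous Robin problem matching the boundary data of $\vu$, and $\vv=\vu-\vw$ handled via the conormal theory applied to the equation for $\vv$. Combining the reverse H\"older bound for $D\vw$ with the $L_{p_0}$ bound for $D\vv$ yields a pointwise sharp-function inequality
$$
(|D\vu|^{p_0})^{\#}(x)\leq N\cM(|\vf_i|^{p_0})(x)+N\cM(|\vg_*|^{p_0})(x)+\theta\cM(|D\vu|^{p_0})(x),
$$
where $\vg_*$ is a Riesz-potential substitute for $\vg$. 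The Fefferman-Stein inequality and weighted boundedness of the Hardy-Littlewood maximal function on $A_{p/p_0}$-weights then deliver the weighted estimate, with the $\theta$-term absorbed by taking $\theta$ small depending on $[\omega]_{A_p}$; choosing $p_0$ slightly below the $\omega$-dependent exponent supplied by Lemma \ref{lem-200615-1006}(b) ensures $\omega\in A_{p/p_0}$. The $\vg$-term is converted into a divergence-form datum via the weighted Hardy-Littlewood-Sobolev inequality (case (a)) or the classical embedding $L_q\hookrightarrow W^{-1}_p$ followed by Lemma \ref{lem-200615-1006}(b) (case (b)). The main obstacle I anticipate is the clean extraction of the reverse H\"older inequality: since the Robin condition couples $\vu$ with $D\vu$ on $\p\Omega$, the cutoff/commutator step must be executed carefully so that the boundary contribution from $\gamma\vu$ does not destroy the gain of integrability provided by Gehring's lemma.
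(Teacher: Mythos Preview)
Your overall architecture—a reverse H\"older inequality for the homogeneous Robin problem followed by a weighted interpolation—is the paper's, but the execution of both steps has real gaps. The reverse H\"older you state involves only $|D\vu|$, whereas the paper's Lemma~\ref{lemma1} necessarily carries $|\vu|$ on both sides:
\[
(|\vu|^p)^{1/p}_{\Omega_r}+(|D\vu|^p)^{1/p}_{\Omega_r}\le N(|\vu|)_{\Omega_{2r}}+N(|D\vu|)_{\Omega_{2r}}.
\]
The Robin condition couples $\vu$ and $D\vu$ on $\partial\Omega$, and your proposed derivation breaks precisely where you anticipate: applying Theorem~\ref{thm-200609-0525} to $(\vu-\vec c)\zeta$ leaves the boundary datum $\gamma^{\alpha\beta}u^\beta\zeta$ (the constant does \emph{not} cancel in a Robin condition), and there is no way to control $\|\gamma\vu\|_{W^{-1/p}_p}$ by $\|\vu-\vec c\|_{L_p}$ alone. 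The paper's remedy is different: it does not subtract a constant from $\vu$. Instead it extends $\gamma\vu\zeta$ to a function $\vvarphi$ on $\partial\Omega$ with $\int_{\partial\Omega}\vvarphi=0$, solves the \emph{global} conormal problem for $\vec w$ with boundary data $-\vvarphi$ and normalization $(\vec w)_{\Omega_{3r/2}}=(\vu)_{\Omega_{3r/2}}$, and then $\vv=\vu-\vec w$ has homogeneous conormal data locally. The mean-zero condition on $\vvarphi$ is what yields $\|\vvarphi\|_{W^{-1/p}_p}\le Nr^{1/p}\|\vvarphi\|_{L_p}$ via a fractional Poincar\'e inequality, and a trace estimate $W^1_q\to L_p(\partial\Omega)$ with $q<p$ then produces $\varepsilon\|D\vu\|_{L_p(\Omega_{2r})}$ plus $L_1$ lower-order terms, absorbed by iteration. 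Your Caccioppoli--Gehring route does not supply this mechanism.

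The weighted step also differs. The paper does not freeze coefficients or use a sharp-function inequality. On each ball it invokes the already-established \emph{unweighted} Robin solvability (Theorem~\ref{thm-200614-1046}) to solve, on all of $\Omega$, the variable-coefficient Robin problem with cutoff data $\vf_i\zeta$ (respectively $\vg\zeta$); the remainder satisfies the homogeneous Robin problem on $\Omega_{2r}$, to which Lemma~\ref{lemma1} applies. This decomposition of $U=|\vu|+|D\vu|$ is fed directly into the ready-made interpolation Theorem~\ref{thm-200604-0840}, with $p_1=p_2=\hat p$, $\nu=0$ for the $\vf_i$-part and $p_1=\hat p$, $p_2=(\hat p)^*$, $\nu=1-(\hat p)^*/\hat p$ for the $\vg$-part. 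Case~(b) is then obtained from Case~(a) by duality (using $\omega^{-p'/p}\in A_{p'}$ and the weighted Sobolev--Poincar\'e embedding of Lemma~\ref{lem-200624-0616}), not by an $L_q\hookrightarrow W^{-1}_p$ embedding.
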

\begin{remark}
	\label{rem2.6}
	Let us make some remarks on the index $p_{\omega}$. By H\"older's inequality, we have $L_{p,\omega}(\Omega)\subset L_{p_\omega,\omega^{p_\omega/p}}(\Omega)$. Hence the theorem holds for $\vg\in L_{p,\omega}(\Omega)$. Also, when $\omega\equiv 1$, we can take $\hat{p}=p$. In this case, $p_\omega=p^*$, which recovers the unweighted estimate in Theorem \ref{thm-200614-1046}. Note that we always have $p_\omega\ge p^*$.
\end{remark}
\begin{remark}
		The boundedness of $\gamma$ in Theorem \ref{thm-200527-1035} can be generalized. If the weight $\omega$ is in the reverse H\"older class $RH_s$ (see the definition in Lemma \ref{lem-200615-1006}), then for the \textbf{case (a)}, it suffices to assume
	\begin{equation}\label{eqn-200709-1214}
	\gamma\in \begin{cases}
		L_{\frac{ps(d-1)}{(s-1)d}+\epsi}\quad &\text{when}\,\,ps/(s-1)\geq d\\
		L_{d-1 +\epsi} &\text{when}\,\,ps/(s-1)< d,
	\end{cases}
	\end{equation}
	where $\epsi$ can be an arbitrarily small positive number. For the \textbf{case (b)}, we also require conditions coming from the duality, say, \eqref{eqn-200709-1214} with $(p,s)$ replaced with $(p',s')$ where $1/p+1/p'=1$ and $s'$ is the constant such that $\omega^{-p'/p}\in RH_{s'}$. Note that $s$ and $s'$ are typically slightly bigger than $1$.
	
When $\omega\equiv 1$, we can take $s=s'=\infty$. Except for an $\epsi$ loss in the case when $p>d$, the condition \eqref{eqn-200709-1214} is consistent with the one in Remark \ref{rmk-200709-0752} for the unweighted solvability and estimate.
\end{remark}

\begin{remark}
In the above theorems, when the leading coefficients are of small BMO (in all variables, instead of partial BMO), the strong ellipticity condition can be generalized to the Legendre-Hadamard condition, i.e., the condition \eqref{eqn-200701-0518} holds with $\xi_i^\alpha$ being of the form $\zeta_i\eta^\alpha$.
\end{remark}

\subsection{Results for scalar elliptic equations}
The last part of this paper will be devoted to the scalar equation
\begin{equation}
\label{eq11.20}
Lu := D_i(a_{ij}D_j u)=D_i f_i+g\quad\text{in}\quad \Omega.
\end{equation}
Clearly, the scalar equation is a special case of the aforementioned elliptic system. We prove that the assumption on the $\p\Omega$ can be relaxed, when the leading coefficients are symmetric and of small BMO. Let us assume that the leading coefficients satisfy
\begin{equation*}
	a_{ij}\xi_i\xi_j\geq \kappa |\xi|^2\quad \forall \xi\in \bR^d,\quad |a_{ij}|\leq \kappa^{-1},\quad a_{ij}=a_{ji}.
\end{equation*}
\begin{assumption}[$\theta$]\label{ass-200617-1149}
	There exists some $R_0>0$, such that $$\sup_{x, 0<r<R_0}\fint_{B_r(x)\cap \Omega}|a_{ij}(y)-(a_{ij})_{B_r(x)\cap\Omega}|\,dy < \theta.$$
\end{assumption}
In the following assumption, instead of a $(d-1)$-dimensional hyperplane, $\partial\Omega$ is locally close (in the sense of the Lebesgue measure) to the boundary of a convex domain.
\begin{assumption}[$\theta$]\label{ass-200519-1052}
For any $x\in\p\Omega$ and $r\le R_0$, there is an orthogonal coordinate system in which $x=0$ and
	\begin{align*}
		%\label{eq11.59}
		&\Omega^+_r:=\{x:\ x'\in B_r',\ \psi(x')<x_d<2Mr\}\subset \Omega\cap ((-Mr,2Mr)\times B'_{r}),\notag\\
		&\Omega\cap ((-\infty,-Mr)\times B'_{r})=\emptyset,\notag\\
		&|\Omega^-_r|\le \theta |B_r|,\quad\text{where}\,\, \Omega^-_r=\{x: \ x'\in B_r',\ -Mr<x_d<\psi(x'),\ x\in \Omega\},
	\end{align*}
	where $\psi$ is a convex function (with Lipschitz constant say $M/10$ in the chosen coordinate system).
\end{assumption}
It is easy to check that the semi-convex domains considered in \cite{Y16,YCYY20} and \cite{YYY20} satisfy Assumption \ref{ass-200519-1052} with vanishing constant, i.e., for arbitrarily small $\theta>0$, we can find $R_0(\theta)>0$ such that Assumption \ref{ass-200519-1052} ($\theta$) is satisfied. Note that in the theorems below, the choice of small $\theta$ is independent of the constant $R_0$, which means that our theorems hold for semi-convex domains.

As in the system case, we first consider the conormal boundary condition
\begin{equation}\label{eqn-200617-1155}
a_{ij}D_j u n_i = f_i n_i + \varphi\quad \text{on}\quad \partial \Omega.
\end{equation}
In the same spirit as Theorems \ref{thm-200609-0520} and  \ref{thm-200609-0525}, we prove the following two theorems regarding the conormal boundary value problem.
\begin{theorem}\label{thm-200617-1154}
	Suppose that $\Omega$ is a domain in $\bR^d$ (not necessarily bounded), $p\in (1,\infty)$, and $u\in \dot{L}^1_p(\Omega)$ is a weak solution to \eqref{eq11.20}-\eqref{eqn-200617-1155} with $f_i\in L_p(\Omega)$ and $g = \varphi = 0$. Then we can find a sufficiently small $\theta_0$ depending on $(d,\kappa,M,p)$, such that if Assumption \ref{ass-200617-1149} ($\theta_0$) and \ref{ass-200519-1052} ($\theta_0$) are satisfied, then we have
	\begin{equation*}
		\norm{Du}_{L_{p}(\Omega)}\le N(R_0^{d(1/p-1)}\norm{Du}_{L_1(\Omega)} + \norm{f_i}_{L_{p}(\Omega)}),
	\end{equation*}
where $N=N(d,\kappa,M,p)$.
\end{theorem}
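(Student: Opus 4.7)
The plan is to adapt the proof of Theorem \ref{thm-200609-0520} to the scalar, symmetric setting, replacing the hyperplane-reflection construction (which used Assumption \ref{ass-200604-1252}) by a passage to the local convex replacement provided by Assumption \ref{ass-200519-1052}, and invoking the local Lipschitz estimate for Neumann harmonic functions on convex domains from \cite{MR3168044} in place of its half-space analogue.

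First, following the $L_p$ mean-oscillation / level-set framework of \cite{DK11b, MR3013054}, I would reduce the assertion to the following local decomposition: there exists $p_0 \in (1,p)$ such that for any $\delta > 0$, every $x_0 \in \overline{\Omega}$, and every $r \in (0, R_0)$, one can write $u = v + w$ on $\Omega_{r/4}(x_0)$ with
\begin{align*}
\|Dv\|_{L_\infty(\Omega_{r/4}(x_0))} &\le N\bigl((|Du|^{p_0})_{\Omega_r(x_0)}^{1/p_0} + (|f_i|^{p})_{\Omega_r(x_0)}^{1/p}\bigr),\\
(|Dw|^{p_0})_{\Omega_{r/4}(x_0)}^{1/p_0} &\le \delta\,(|Du|^{p_0})_{\Omega_r(x_0)}^{1/p_0} + N(\delta)\,(|f_i|^{p})_{\Omega_r(x_0)}^{1/p}.
\end{align*}
In the interior case $B_r(x_0) \subset \Omega$, this is standard: freeze $\bar a = (a)_{B_r(x_0)}$ and take $v$ to solve $D_i(\bar a_{ij} D_j v) = 0$ on $B_{r/2}(x_0)$ with $v = u$ on $\p B_{r/2}(x_0)$; the constant-coefficient interior Lipschitz estimate controls $\|Dv\|_{L_\infty(B_{r/4}(x_0))}$, and a Caccioppoli argument combined with Assumption \ref{ass-200617-1149} gives the smallness of $Dw$ in $L_{p_0}$.

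The new content lies in the boundary case. For $x_0 \in \p\Omega$, I work in the local coordinates of Assumption \ref{ass-200519-1052} and let $\tilde\Omega := \{y_d > \psi(y')\}$ be the convex epigraph. Since $\tilde\Omega \cap B_r \subset \Omega^+_r \subset \Omega$ (using $M\geq 1$), the symmetric difference $(\Omega \setminus \tilde\Omega) \cap B_r \subset \Omega^-_r$ has measure at most $\theta|B_r|$. Freezing $\bar a = (a)_{\Omega_r(x_0)}$, the linear change of variables $z = \bar a^{-1/2} y$ reduces $D_i(\bar a_{ij} D_j \cdot)$ to the Laplacian by symmetry of $\bar a$, while preserving the convexity of $\tilde\Omega$. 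On the image of $\tilde\Omega \cap B_{r/2}$, I would solve the Neumann problem for the Laplacian with homogeneous conormal data on the image of $\{y_d = \psi(y')\}$ and Dirichlet data on the remaining piece of the boundary chosen to match a suitable extension of $u$; by \cite{MR3168044}, the resulting $v$ satisfies a local Lipschitz estimate
\begin{equation*}
\|Dv\|_{L_\infty(B_{r/4})} \le N\,(|Dv|^{p_0})_{B_{r/2}}^{1/p_0}.
\end{equation*}
Pulling back to the original coordinates, $v$ provides the bounded-gradient piece. The remainder $w = u - v$ solves a conormal problem on $\Omega_{r/2}(x_0)$ whose right-hand side collects the BMO defect $(a - \bar a)Du$, the source $f_i$, and contributions supported on the small-measure set $\Omega^-_r$; each is absorbed via H\"older's inequality combined with a Gehring-type reverse H\"older self-improvement of the local integrability of $Du$.

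I expect the main obstacle to lie in the construction of the auxiliary $v$ and the accompanying control of the mismatch between the conormal conditions on $\p\Omega$ and on $\p\tilde\Omega$. One must couple the boundary-condition switch with a choice of Dirichlet data on the outer piece of $\p B_{r/2}$ so that the residual equation for $w$ has right-hand side supported either on the small set $\Omega^-_r$ (absorbed via H\"older and the self-improved integrability) or of pure BMO / $f_i$ type (already handled as in the proof of Theorem \ref{thm-200609-0520}). The symmetry of $a_{ij}$ is essential both for the reduction to the Laplacian and for preserving convexity under $\bar a^{-1/2}$, and the Lipschitz estimate of \cite{MR3168044} is exactly what permits relaxing Assumption \ref{ass-200604-1252}(b) to the weaker convex-replacement Assumption \ref{ass-200519-1052}.
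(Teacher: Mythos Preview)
Your overall plan---reduce to a local decomposition and run the level-set argument of Section~\ref{sec-200626-1027}---matches the paper's structure, and your identification of \cite{MR3168044} as the replacement for the half-space Lipschitz estimate is exactly right. However, there is a genuine gap in your boundary construction. You propose to build $v$ first, imposing zero Neumann data on the convex graph $\Gamma_{cr}=\{y_d=\psi(y')\}$ and Dirichlet data matching $u$ on the outer piece. The problem is that $\Gamma_{cr}$ lies in the \emph{interior} of $\Omega$ (Assumption~\ref{ass-200519-1052} gives $\Omega_r^+\subset\Omega$), so $u$ carries no boundary condition there whatsoever. Consequently $w=u-v$ has conormal flux $\bar a_{ij}D_j u\, n_i$ on $\Gamma_{cr}$, which is of size $|Du|$ and cannot be made small by any choice of Dirichlet data on the lateral boundary. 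Your acknowledged ``main obstacle'' is real, and the mechanism you sketch (``couple the boundary-condition switch with a choice of Dirichlet data'') cannot remove it: the mismatch is a genuine Neumann flux along a $(d-1)$-dimensional set, not a bulk term supported on the small set $\Omega_r^-$.

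The paper resolves this with a reflection across the convex graph, $T(x)=(x',2\psi(x')-x_d)$, which is the Lipschitz-graph analogue of the flat reflection in Proposition~\ref{prop-200611-0515}. Testing the original weak formulation on $\Omega$ against the even extension of a test function shows that $u$ itself satisfies a conormal problem on $\Omega_r^+$ with boundary condition \emph{on $\Gamma_{cr}$}, the new right-hand side $\tilde f_i$ picking up extra terms of the form $1_{T(x)\in\Omega_r^-}(\cdot)\circ T$ supported on a set of measure $\le\theta|B_r|$. One then constructs $w$ first, as the $\dot L^1_{\tilde p}$ solution of the full conormal problem $D_i(\bar a_{ij}D_j w)=D_i\tilde f_i+D_i((\bar a_{ij}-a_{ij})D_j u)$ on the convex region $\Omega_r^+$, using the Neumann solvability of Geng--Shen \cite{GS10}; then $v=u-w$ automatically has homogeneous conormal data on $\Gamma_{cr}$ and \cite{MR3168044} applies after the linear change of variables you describe. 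In short: the order of construction is reversed from yours, and the reflection is the missing device that transports the conormal condition from $\partial\Omega$ (buried in $\Omega_r^-$) up to the convex surface.
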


The discussions on the solvability is the same as in Remark \ref{rmk-200619-1150}, which we will omit here. As before, when $\Omega$ is also a Lipschitz domain, we can include non-zero $\varphi$ and $g$, with the compatibility
\begin{equation}\label{eqn-200621-0643}
\int_{\p\Omega}\varphi = \int_\Omega g.
\end{equation}
\begin{theorem}\label{thm-200618-1200}
	Consider the problem \eqref{eq11.20}-\eqref{eqn-200617-1155} on a bounded Lipschitz domain $\Omega$. For any $p\in(1,\infty)$, we can find a sufficiently small $\theta_0$ depending on $(d,\kappa,M,p)$, such that if Assumption \ref{ass-200617-1149} ($\theta_0$) and \ref{ass-200519-1052} ($\theta_0$) are satisfied, the $W^1_p$ well-posedness holds. Namely, for any $f_i\in L_p(\Omega)$, $g\in L_{p^*}(\Omega)$, and $\varphi\in W^{-1/p}_p(\p\Omega)$ satisfying \eqref{eqn-200621-0643}, there exists a unique solution $u\in W^1_p(\Omega)$ with mean zero, satisfying
	\begin{equation*}%\label{eqn-200618-1201}
	\norm{u}_{W^1_{p}(\Omega)}\leq N(\norm{f_i}_{L_{p}(\Omega)} + \norm{g}_{L_{p^*}(\Omega)} + \norm{\varphi}_{W^{-1/p}_p(\p\Omega)}),
	\end{equation*}
where $N=N(d,\kappa,M,p,R_0,|\Omega|)$ is a constant.
\end{theorem}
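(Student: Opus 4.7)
The plan is to deduce Theorem \ref{thm-200618-1200} from the homogeneous a priori estimate Theorem \ref{thm-200617-1154} in two steps that parallel the passage from Theorem \ref{thm-200609-0520} to Theorem \ref{thm-200609-0525} in the system case. First I would upgrade that a priori estimate to full $W^1_p$ well-posedness of the problem with $g=\varphi=0$; then I would absorb $(g,\varphi)$ into a modified divergence source via a Hahn--Banach duality construction.

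For the homogeneous step, a bounded Lipschitz domain is a $W^1_p$-extension domain for every $p\in[1,\infty)$, and therefore satisfies the local Sobolev--Poincar\'e inequality \eqref{eqn-200619-0355}. The scalar version of Remark \ref{rmk-200619-1150}(b) then applies: an $\dot L^1_2$-solution produced by Lax--Milgram is upgraded to $\dot L^1_p$ for $p>2$ via Caccioppoli plus Gehring, while the range $p\in(1,2)$ is handled by duality using the symmetry $a_{ij}=a_{ji}$. After dropping the lower-order $L_1$ term as in Remark \ref{rmk-200619-1150}(b), the homogeneous problem is uniquely solvable in the mean-zero class with $\|Du\|_{L_p(\Omega)}\le N\|f_i\|_{L_p(\Omega)}$.

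For the reduction step, define
\[
\Lambda(\phi):=\int_\Omega g\phi-\int_{\p\Omega}\varphi\phi,\qquad \phi\in C^\infty(\overline{\Omega}).
\]
Using the Sobolev embedding $W^1_{p'}(\Omega)\hookrightarrow L_{(p^*)'}(\Omega)$ for the first term and the boundedness of the trace $W^1_{p'}(\Omega)\to W^{1/p}_{p'}(\p\Omega)$ combined with the duality $W^{-1/p}_p=(W^{1/p}_{p'})^*$ for the second, $\Lambda$ extends to a bounded functional on $W^1_{p'}(\Omega)$ of norm at most $N(\|g\|_{L_{p^*}}+\|\varphi\|_{W^{-1/p}_p})$. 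The compatibility condition \eqref{eqn-200621-0643} makes $\Lambda$ vanish on constants, so it descends to $W^1_{p'}(\Omega)/\bR$. Poincar\'e's inequality on $\Omega$ realizes the gradient as an isometric embedding of $W^1_{p'}(\Omega)/\bR$ as a closed subspace of $L_{p'}(\Omega;\bR^d)$, so Hahn--Banach followed by $L_p$--$L_{p'}$ duality yields $H_i\in L_p(\Omega)$ with $\Lambda(\phi)=-\int_\Omega H_iD_i\phi$ for every $\phi\in C^\infty(\overline\Omega)$ and $\|H_i\|_{L_p}\le N(\|g\|_{L_{p^*}}+\|\varphi\|_{W^{-1/p}_p})$. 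Substituting back into the scalar analogue of \eqref{eqn-200703-0330}, the original problem is equivalent to the homogeneous one with modified source $\widetilde f_i:=f_i+H_i$, to which the first step applies and yields the unique mean-zero $u\in W^1_p(\Omega)$ with the desired estimate, the $L_p$-norm of $u$ being controlled by Poincar\'e on mean-zero functions.

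The main obstacle is the Hahn--Banach construction of $H_i$, which depends on several standard but nontrivial features of Lipschitz domains: the extension property, Poincar\'e's inequality, density of $C^\infty(\overline\Omega)$ in $W^1_{p'}(\Omega)$, and the duality pairing of the trace space. Once these are in place, the real analytical work is hidden inside Theorem \ref{thm-200617-1154}, whose small-$\theta$ hypotheses \ref{ass-200617-1149} and \ref{ass-200519-1052} provide the core estimate via the reflection and convex-harmonic-approximation arguments described in the introduction.
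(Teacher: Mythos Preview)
Your proposal is correct and follows essentially the same route as the paper: the paper states in Section~\ref{sec5} that Theorem~\ref{thm-200618-1200} is proved exactly as Theorem~\ref{thm-200609-0525} once Proposition~\ref{prop-200625-0705} replaces Proposition~\ref{prop-200611-0515}, and your two steps (Remark~\ref{rmk-200619-1150}(b) for the homogeneous problem, then the Hahn--Banach reduction of $(g,\varphi)$ to a divergence term) reproduce the proof of Theorem~\ref{thm-200609-0525} in Section~\ref{sec-200705-1036} with only a harmless reversal of the order of the two steps.
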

We also consider the Robin condition
\begin{equation}\label{eqn-200528-1239}
a_{ij}D_j u n_i+ \gamma u=f_i n_i\quad \text{on}\quad \partial \Omega.
\end{equation}
For the boundary operator, we assume for some $\gamma_0\in (0,1]$,
\begin{equation*}
	\gamma\geq \gamma_0\quad \text{on}\,\,E\subset\p\Omega\,\,\text{with}\,\,|E|>0,\quad \gamma \leq \gamma_0^{-1}.
\end{equation*}
\begin{theorem}\label{thm-200622-1138}
	On a bounded Lipschitz domain, we consider the problem \eqref{eq11.20}-\eqref{eqn-200528-1239}. For any $p\in(1,\infty)$, we can find a sufficiently small $\theta_0$ depending on $(d,\kappa,M,p,[\omega]_{A_p})$, such that if Assumption \ref{ass-200617-1149} ($\theta_0$) and \ref{ass-200519-1052} ($\theta_0$) are satisfied, then we have the $W^1_{p,\omega}$ well-posedness for any $A_p$ weight $\omega$. To be more specific, for any $f\in L_{p,\omega}, g\in L_{q,\omega^a}$, there exists a unique solution $u\in W^1_{p,\omega}$ satisfying
	\begin{equation*}
		\norm{u}_{W^1_{p,\omega}(\Omega)}\leq N( \norm{f_i}_{L_{p,\omega}(\Omega)} + \norm{g}_{L_{q,\omega^a}(\Omega)}),
	\end{equation*}
where the constant $N=N(d,\kappa,M,p,R_0,\gamma_0,\Omega,E,[\omega]_{A_p})$, and $q$ and $a$ can be taken as in Theorem \ref{thm-200527-1035}.
\end{theorem}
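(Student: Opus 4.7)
The plan is to follow the same strategy as in the proof of Theorem \ref{thm-200527-1035} for elliptic systems, with the scalar conormal result Theorem \ref{thm-200618-1200} replacing the system version Theorem \ref{thm-200609-0525} and with the weaker boundary hypothesis Assumption \ref{ass-200519-1052} in place of Assumption \ref{ass-200604-1252}. First I would establish unweighted $W^1_2$ well-posedness: the bilinear form
\[
B[u,\phi]=\int_\Omega a_{ij}D_j u\,D_i\phi+\int_{\p\Omega}\gamma u\phi
\]
is continuous on $W^1_2(\Omega)$, and coercive because $\gamma\ge \gamma_0$ on the positive-measure set $E\subset\p\Omega$. Indeed, on a bounded Lipschitz domain, $\norm{u}_{L_2(\Omega)}$ is controlled by $\norm{Du}_{L_2(\Omega)}+\norm{u}_{L_2(E)}$ via a standard Poincar\'e-type inequality, and Lax--Milgram produces the unique $W^1_2$-solution with the corresponding $L_2$-estimate.

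The central analytic ingredient for passing to general $p$ is a reverse H\"older inequality for the homogeneous Robin problem, exactly analogous to Lemma \ref{lemma1}. To prove it in the present scalar setting, on each small ball $B_r(x_0)$ with $x_0\in\overline\Omega$ I would split the solution as $u=v+w$, where $v$ captures the Robin lower-order term $\gamma u$ by solving a local conormal problem
\[
D_i(a_{ij}D_j v)=0\quad\text{in }\Omega_{2r}(x_0),\qquad a_{ij}D_j v\,n_i=-\gamma u\,\chi_{\p\Omega}\quad\text{on }\p\Omega_{2r}(x_0),
\]
together with the homogeneous conormal condition on the interior boundary piece, while $w=u-v$ solves the conormal problem with source data $f_i,g$ inherited from $u$. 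Theorem \ref{thm-200618-1200} then provides sharp $L_p$-control of $Dv$ and $Dw$ in terms of the data and of the trace $\gamma u|_{\p\Omega\cap B_{2r}(x_0)}$, and a trace / Sobolev chain on the Lipschitz boundary converts the resulting estimate into the self-improving bound $(\fint_{\Omega_r}|Du|^q)^{1/q}\le N(\fint_{\Omega_{2r}}|Du|^{q_0})^{1/q_0}$ for some $q>q_0$, which is the desired reverse H\"older inequality.

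With this reverse H\"older inequality and the conormal $W^1_p$-estimate of Theorem \ref{thm-200618-1200}, I would run the level-set / good-$\lambda$ decomposition of Section \ref{sec4}, modeled on \cite{MR1486629,BW05,S07,MR3013054}, to control a suitable sharp function of $Du$ by maximal functions of the data. This immediately yields the unweighted $W^1_p$-bound for any $p\in(1,\infty)$. The weighted $W^1_{p,\omega}$-statement then follows by running the same level-set argument with the Muckenhoupt measure $\omega\,dx$ (via Lemma \ref{lem-200615-1006}) or by Rubio de Francia extrapolation from the unweighted scale. The two admissible choices of $(q,a)$ for the source $g$ are read off exactly as in Theorem \ref{thm-200527-1035}: case (a) comes from the weighted Sobolev embedding using the reverse-H\"older character of $\omega$, and case (b) from the embedding $L_{dp/(d+p-1)}(\Omega)\hookrightarrow W^{-1/p}_p(\p\Omega)$ combined with the divergence trick that rewrites $g$ as $D_i\tilde f_i$.

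The step I expect to be the main obstacle is the reverse H\"older inequality for the Robin problem under Assumption \ref{ass-200519-1052}. Unlike the semi-convex or $C^1$ setting of \cite{YYY20}, neither Moser iteration (which still applies in the scalar case but requires a better Sobolev--Poincar\'e inequality than I have here) nor non-tangential maximal function estimates are comfortable tools, because $\p\Omega$ is merely measure-close to a convex boundary. Instead, the absorption of $\gamma u$ into the conormal datum must be carried out carefully so that the error terms arising from the non-flat piece $\Omega^-_r$ of $\p\Omega$ are swallowed by the $|\Omega^-_r|\le\theta|B_r|$ gain. The Lipschitz character of $\p\Omega$ is used to define the trace of $\gamma u$ and to run the local conormal theory with inhomogeneous boundary data, while the symmetric small-BMO condition on $a_{ij}$ is used through Theorem \ref{thm-200618-1200} to produce scale-invariant constants depending only on $(d,\kappa,M,p,[\omega]_{A_p})$ once $\theta_0$ is chosen small enough.
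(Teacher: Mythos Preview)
Your overall plan matches the paper's: Section~\ref{sec5} says explicitly that once Proposition~\ref{prop-200625-0705} (hence Theorem~\ref{thm-200618-1200}) is available, Theorems~\ref{thm-200617-1154}, \ref{thm-200618-1200}, and \ref{thm-200622-1138} follow by repeating the system arguments of Sections~\ref{sec3}--\ref{sec4} verbatim, because Assumption~\ref{ass-200519-1052} enters \emph{only} through the conormal decomposition and all remaining steps use just the Lipschitz structure. This means your last paragraph misidentifies the obstacle: the $\Omega^-_r$ error terms have already been absorbed inside the conormal theory, and the reverse H\"older step for the Robin problem needs no further reference to the convex-closeness of $\partial\Omega$. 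Also, case~(b) of the indices $(q,a)$ is obtained in the paper by duality against the adjoint Robin problem (using the weighted Poincar\'e inequality of Lemma~\ref{lem-200624-0616}(b)), not by a divergence trick or a boundary embedding.

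There is, however, a concrete gap in your reverse H\"older construction. You propose to solve the auxiliary conormal problem for $v$ on the \emph{local} domain $\Omega_{2r}(x_0)$, with conormal data $-\gamma u$ on $\partial\Omega\cap B_{2r}$ and homogeneous conormal data on the spherical piece. This cannot be carried out with Theorem~\ref{thm-200618-1200}, which applies only to the full domain $\Omega$; the truncated domain $\Omega_{2r}$ need not satisfy Assumption~\ref{ass-200519-1052} near the artificial boundary, and the conormal compatibility condition (mean zero of the boundary data) generically fails. The paper's Lemma~\ref{lemma1} instead takes a cutoff $\zeta$, extends $\gamma u\,\zeta$ to a mean-zero function $\varphi$ supported on $\Lambda_{3r}$, solves the inhomogeneous conormal problem on all of $\Omega$, normalizes so that $(w)_{\Omega_{3r/2}}=(u)_{\Omega_{3r/2}}$, and then localizes via the fractional Poincar\'e inequality~\eqref{eqn-200627-0438}, the trace lemma, and an iteration in the radius. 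Note too that the resulting reverse H\"older inequality must track $|u|$ together with $|Du|$ (one cannot subtract constants from a Robin solution), not $|Du|$ alone as you wrote; this is why the interpolation via Theorem~\ref{thm-200604-0840} is applied to $U=|u|+|Du|$.
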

\section{Elliptic systems with conormal boundary conditions}
                            \label{sec3}

\subsection{Decomposition lemma}
The key step in proving Theorem \ref{thm-200609-0520} is the following decomposition lemma.
\begin{proposition}\label{prop-200611-0515}
Suppose that Assumption \ref{ass-200604-1252} $(\theta)$ is satisfied with some $\theta\in (0,1)$ and for some $q\in (1,\infty)$, $\vu\in \dot{L}^1_q(\Omega;\bR^m)$ is a weak solution to \eqref{eqn-200622-1036}-\eqref{eqn-200609-0514} with $\vf_i\in L_q(\Omega)$ and $\vg= \vvarphi = 0$. Then, for any $\tilde{p}\in (1,q)$, $r\in (0,R_0)$, and $x\in\overline{\Omega}$, we can find non-negative functions $V, W\in L_{\tilde{p}}(\Omega_{r/(4\sqrt{d}M)}(x))$ satisfying $|D\vu|\leq V + W$. Furthermore, the following estimates hold:
	\begin{equation}\label{eqn-200610-0250}
	(W^{\tilde{p}})^{1/\tilde{p}}_{\Omega_{r/(4\sqrt{d}M)}(x)} \le N \theta^{1/\tilde{p} - 1/q}(|D\vu|^{q})^{1/q}_{\Omega_r(x)} + N (F^{\tilde{p}})^{1/\tilde{p}}_{\Omega_{r}(x)},
	\end{equation}
	\begin{equation}\label{eqn-200611-0449}
	\norm{V}_{L_\infty(\Omega_{r/(8\sqrt{d}M)}(x))} \le N (|D\vu|^{q})^{1/q}_{\Omega_r(x)} + N(F^{\tilde{p}})^{1/\tilde{p}}_{\Omega_{r}(x)},
	\end{equation}
where $F=\sum_{i,\alpha}|f_i^\alpha|$ and $N=N(d,m,\kappa,M,q,\tilde p)>0$.
\end{proposition}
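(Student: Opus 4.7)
The plan is to follow the Caffarelli--Peral freezing-and-comparison scheme, as refined in the Reifenberg setting by Dong--Kim and Byun--Wang, adapting the reflection argument of \cite{MR3013054} to a boundary that is merely measure-close, not Hausdorff-close, to a half-space. By a standard covering reduction one splits into two cases: either $B_r(x)\subset\Omega$, in which case only Assumption \ref{ass-200604-1252}(a) is used and the argument is the interior version of what follows; or $B_r(x)$ meets $\p\Omega$, in which case one translates $x$ to a nearby boundary point and invokes Assumption \ref{ass-200604-1252}(b). I describe the boundary case, as the interior one is strictly simpler.

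Pass to the coordinates from Assumption \ref{ass-200604-1252}(b) so that $\Omega^+_r=B'_r\times(0,Mr)\subset\Omega$ and $|\Omega^-_r|\le\theta|B_r|$. Extend $\vu|_{\Omega^+_r}$ to the symmetric cylinder $C_r:=B'_r\times(-Mr,Mr)$ by even reflection across $\{y_d=0\}$, overriding the original values on $\Omega^-_r$, and reflect the leading coefficients with appropriate sign changes so that on $C_r$ the extension $\tilde\vu$ weakly satisfies
\[
\int_{C_r}\tilde a^{\alpha\beta}_{ij}D_j\tilde u^\beta D_i\phi^\alpha
= \int_{C_r}\tilde f^\alpha_i D_i\phi^\alpha + \error(\vphi)
\]
for $\vphi\in C_c^\infty(C_r)$, where $\error$ is supported in $\Omega^-_r$ together with its reflection. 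The error encodes both the discrepancy between the artificial conormal condition on $\{y_d=0\}$ and the genuine one on $\p\Omega\cap B_r$, and the fact that $\tilde\vu$ disagrees with the original $\vu$ on the small bad set $\Omega^-_r$.

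Freeze in the tangential variables by setting $\bar a^{\alpha\beta}_{ij}(y_d):=\fint_{B'_r}\tilde a^{\alpha\beta}_{ij}(z',y_d)\,dz'$, and let $\vw$ solve the Dirichlet problem $D_i(\bar a^{\alpha\beta}_{ij}D_j w^\beta)=0$ on $B_{r/(2\sqrt{d}M)}$ with data $\vw=\tilde\vu$ on the boundary. Since $\bar a$ is independent of $y'$, the tangential derivatives $D_{y'}\vw$ satisfy the same equation, and bootstrapping through the equation for $D_{y_d}\vw$ (as in \cite{MR2540989,MR3013054}) yields $\|D\vw\|_{L_\infty(B_{r/(4\sqrt{d}M)})}\le N(|D\tilde\vu|^q)^{1/q}_{B_{r/(2\sqrt{d}M)}}$. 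Setting $V:=|D\vw|$ gives \eqref{eqn-200611-0449}. Let $W:=|D(\tilde\vu-\vw)|$; the difference $\tilde\vu-\vw$ vanishes on $\p B_{r/(2\sqrt{d}M)}$ and solves a Dirichlet problem whose right-hand side has three pieces: $(\bar a-\tilde a)D\tilde\vu$, $\tilde\vf$, and $\error$. The $W^1_{\tilde p}$-estimate for this frozen-coefficient model problem, combined with a H\"older inequality on each piece, produces a small factor $\theta^{1/\tilde p-1/q}$ from the partial-BMO bound \eqref{eqn-200604-1228} in the first piece and from $|\Omega^-_r|\le\theta|B_r|$ in the third, together with a clean $F$-contribution from the second; passing to averages over $B_r$ then yields \eqref{eqn-200610-0250}.

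The main obstacle is controlling $\error$ when $\p\Omega$ is only measure-close to the flat model, which is precisely where the standard Reifenberg-style argument breaks down. The key observation driving the proof is that $\error$ is supported in the set $\Omega^-_r$ of small measure, so a H\"older inequality immediately converts the bound $|\Omega^-_r|\le\theta|B_r|$ into the required $L_{\tilde p}$-smallness factor $\theta^{1/\tilde p-1/q}$, and no Hausdorff-type flatness is needed. A secondary subtlety is the careful choice of signs when reflecting $\tilde a^{\alpha\beta}_{ij}$ and $\tilde f^\alpha_i$ so that the artificial conormal condition on $\{y_d=0\}$ is compatible with the even reflection; this follows the template developed in \cite{MR3013054}.
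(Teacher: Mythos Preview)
Your overall scheme is sound and you have correctly isolated the heart of the matter: the reflection error lives on the set $\Omega^-_r$ of measure $\le\theta|B_r|$, and a single H\"older inequality turns that into the factor $\theta^{1/\tilde p-1/q}$. That is precisely the new observation in the paper.

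Your decomposition, however, is organized differently from the paper's. You perform a ``harmonic replacement'': extend $\vu$ to $\tilde\vu$ on the full cylinder, solve the homogeneous $\bar a$--Dirichlet problem with boundary datum $\tilde\vu$, call the solution $\vw$, and set $V=|D\vw|$, $W=|D(\tilde\vu-\vw)|$. The paper instead stays on the upper half-cylinder $\Omega^+_{cr}$, lets $\vw$ absorb \emph{all} the error (it solves the $\bar a$--conormal problem with right-hand side $\bar\vf$, constructed by extending to a symmetric domain and solving a Dirichlet problem there, so that evenness in $y_d$ recovers the conormal condition), and takes $\vv=\vu-\vw$ as the Lipschitz piece. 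Both routes work; yours is closer in spirit to the original Caffarelli--Peral argument, while the paper's avoids introducing $\tilde\vu$ and keeps the comparison problem conormal throughout.

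There is one genuine gap in your write-up. On the bad set $\Omega^-_r\cap\Omega_{r/(4\sqrt dM)}$ you have replaced $\vu$ by its reflection, so your $V+W$ dominates $|D\tilde\vu|$, not $|D\vu|$, and the required pointwise inequality $|D\vu|\le V+W$ fails there. The paper handles this by simply \emph{defining} $W:=|D\vu|$ and $V:=0$ on $\Omega^-_r$; the extra contribution to $(W^{\tilde p})^{1/\tilde p}$ is then controlled by the same H\"older step,
\[
\|D\vu\|_{L_{\tilde p}(\Omega^-_r)}\le |\Omega^-_r|^{1/\tilde p-1/q}\|D\vu\|_{L_q(\Omega_r)}\le (\theta|B_r|)^{1/\tilde p-1/q}\|D\vu\|_{L_q(\Omega_r)},
\]
which is exactly the bound you already use for the error term. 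Add this patch and your argument is complete.
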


\begin{proof}
The proof is similar to that of \cite[Lemma~5.1]{MR3013054} even though our assumption on the boundary is more general. Below we only give a sketch of the proof. Let $c=1/(\sqrt{d}M)\le 1$. We discuss three cases.

\textbf{Case 1}: the interior case $B_{cr/4}(x)\subset \Omega$. The construction is the standard freezing coefficient procedure, in which no boundary condition is involved. See for example \cite[Lemma~8.3 (i)]{DK11b}.

\textbf{Case 2}: $x\in\p\Omega$. We will focus on this case. Here the smallness also comes from ``flattening'' the boundary. We choose the coordinate system as in Assumption \ref{ass-200604-1252}.
	
	We first construct $W$ using the reflection technique in \cite{MR3013054}. In the following, we denote
	\begin{equation*}
\Gamma_{cr}:=\p\Omega_{cr}^{+}\cap\{y^d=0\}.
	\end{equation*}
A direct computation shows that $u$ satisfies
	\begin{equation*}
		\begin{cases}
			D_i(\bar{a}_{ij}^{\alpha\beta}D_j u^{\beta}) = D_i\bar{f}_i^\alpha \quad&\text{in}\quad \Omega_{cr}^{+},\\
			\bar{a}_{ij}^{\alpha\beta}D_j u^\beta n_i = \bar{f}_i^\alpha n_i \quad &\text{on}\quad \Gamma_{cr},
		\end{cases}
	\end{equation*}
where,
	\begin{equation*}
		\bar{a}_{ij}^{\alpha\beta}=\bar{a}_{ij}^{\alpha\beta}(y_d)=\fint_{B_r'(0)}a_{ij}^{\alpha\beta}(y',y_d)\,dy'
	\end{equation*}
	and the source term $\{\bar{f}_i^\alpha\}$ is defined as follows. For $i=1,\ldots,d-1$,
	\begin{equation*}
		\bar f_i^\alpha=f_i^\alpha + D_i((\bar{a}_{ij}^{\alpha\beta} - a_{ij}^{\alpha\beta})D_ju^\beta) + 1_{T(y)\in \Omega^-_{cr}}
		\big(f_i^\alpha-a_{ij}^{\alpha\beta}D_j u^{\beta} \big)\circ T(y)
	\end{equation*}
and
		\begin{equation*}
		\bar f_d^{\alpha}=f_d^\alpha + D_d((\bar{a}_{dj}^{\alpha\beta} - a_{dj}^{\alpha\beta})D_ju^\beta)- 1_{T(y)\in \Omega^-_{cr}}\big(f_d^\alpha-a_{dj}^{\alpha\beta}D_ju^\beta\big)\circ T(y),
		\end{equation*}
where
\begin{equation*}
	T(y',y_d) = (y', -y_d).
\end{equation*}
	To see this, we can take any test function $\phi\in C^\infty_c(\Omega_{cr}^+\cup\Gamma_{cr})$ and construct a legitimate test function for the original problem on $\Omega\cap (B_{cr}'\times(-Mcr,Mcr))$ with conormal boundary condition on $\p\Omega\cap B_{cr}'\times(-Mcr,Mcr)$ by
	\begin{equation*}
		\mathcal{E}\phi:=\begin{cases}
			\phi\quad &\text{in}\,\,\Omega_{cr}^{+},\\
			\phi\circ T\quad &\text{in}\,\,\Omega_{cr}^{-}.
		\end{cases}
	\end{equation*}
	For details of this computation, we refer the reader to \cite[Lemma~4.2]{MR3013054}.
	
	Now we find a function $w\in W^1_{\tilde{p}}(\Omega_{cr}^{+})$ satisfying
	\begin{equation}\label{eqn-200611-1218}
	\begin{cases}
	D_i(\bar{a}_{ij}^{\alpha\beta}D_j w^{\beta}) = D_i\bar{f}_i^\alpha \quad&\text{in}\quad \Omega_{cr}^{+},\\
	\bar{a}_{ij}^{\alpha\beta}D_j w^\beta n_i = \bar{f}_i^\alpha n_i  \quad &\text{on}\quad \Gamma_{cr},
	\end{cases}
	\end{equation}
with $D\vw$ being controlled. Indeed, this can be achieved by considering an extended and mollified problem. Let $\tilde{B}$ be a smooth domain which is symmetric with respect to the $y_d$-hyperplane, with
	\begin{equation*}
		B_{cr}'\times(-Mcr,Mcr)\subsetneq\tilde{B} \subsetneq B_{2cr}'\times(-2Mcr,2Mcr).
	\end{equation*}
Next, we construct an extended problem on $\tilde{B}$ with coefficients $\tilde{a}_{ij}^{\alpha\beta}$. First, let $\tilde{a}_{ij}^{\alpha\beta} = \bar{a}_{ij}^{\alpha\beta}$ on $\Omega_{cr}^+$, and in $B'_{cr}\times (-Mcr,0)$,
	\begin{equation}\label{eqn-200702-0626}
		\tilde{a}_{ij}^{\alpha\beta}(y', y_d) := \begin{cases}
			\bar{a}_{ij}^{\alpha\beta}(-y_d)\quad &i,j<d\,\,\text{or}\,\,i=j=d,\\
			-\bar{a}_{ij}^{\alpha\beta}(-y_d)\quad &i=d,j<d\,\,\text{or}\,\,i<d, j=d.
		\end{cases}
	\end{equation}
Near the boundary of $\tilde{B}$, simply let $\tilde a_{ij}^{\alpha\beta}$ be $\delta_{ij}\delta_{\alpha\beta}$. In between, let $\tilde a_{ij}^{\alpha\beta}$ changes smoothly. Clearly, we can construct in the way that the oddness and evenness in \eqref{eqn-200702-0626} and the ellipticity condition \eqref{eqn-200701-0518} are always satisfied. To extend $\bar{f}$, we first take the even/odd extension with respective to the $y_d$-hyperplane to $B'_{cr}\times (-Mcr,0)$ for $\{\bar{f}_i\}_{i<d}$ and $\bar{f}_d$, respectively. In $\tilde B\setminus (B_{cr}'\times(-Mcr,Mcr))$, we simply take the zero extension. Now we solve
	\begin{equation}\label{eqn-200619-0700}
	\begin{cases}
	D_i(\tilde{a}_{ij}^{\alpha\beta}D_j w^{\beta}) = D_i\bar{f}_i^\alpha \quad&\text{in}\quad \tilde{B}\\
	w^\alpha = 0 \quad &\text{on}\quad \p \tilde{B}.
	\end{cases}
	\end{equation}
	According to \cite[Theorem~8.6~(iii)]{DK11b}, there is a unique solution $\vec{w}=(w^\alpha)_{\alpha=1}^m$ to \eqref{eqn-200619-0700} in $W^1_{\tilde{p}}(\tilde{B};\bR^m)$, with
	\begin{equation}\label{eqn-200619-0754}
	\norm{D\vec{w}}_{L_{\tilde{p}}(\tilde{B})} \leq N(d,m, \kappa,\tilde{p})\sum_{i,\alpha} \norm{\bar{f}_i^\alpha}_{L_{\tilde{p}(\tilde{B})}},
	\end{equation}
where the fact that the constant $N$ is independent of $r$ can be seen by using a scaling argument.
	Clearly the solution $\vec{w}$ is even in the $y^d$-variable, from which we can deduce that \eqref{eqn-200611-1218} is satisfied. Furthermore, from \eqref{eqn-200619-0754}, H\"older's inequality, Assumption \ref{ass-200604-1252} ($\theta$), and our construction,
	\begin{equation}\label{eqn-200611-0448}
	\begin{split}
	&(|D\vec{w}|^{\tilde{p}})^{1/\tilde{p}}_{\Omega_{cr}^+} \leq N (|\bar{f}_i^\alpha|^{\tilde{p}})^{1/\tilde{p}}_{\Omega_{cr}^{+}}\\
	&\leq N \left((|f_i^\alpha|^{\tilde{p}})^{1/\tilde{p}}_{\Omega_{r}} + (|1_{T(x)\in \Omega^-_{cr}} D\vu|^{\tilde{p}})^{1/\tilde{p}}_{\Omega_{cr}^{+}} + (|(\bar{a}_{ij}^{\alpha\beta} - a_{ij}^{\alpha\beta})D_ju^\beta|^{\tilde{p}})^{1/\tilde{p}}_{\Omega_{cr}^+}\right)\\
	&\leq N \left((|f_i^\alpha|^{\tilde{p}})^{1/\tilde{p}}_{\Omega_{r}} + \theta^{1/\tilde{p}-1/q} (|D\vu|^{q})^{1/q}_{\Omega_r}\right).
	\end{split}
	\end{equation}
	Now, $\vv:=\vu-\vw$ satisfies
	\begin{equation*}
	\begin{cases}
	D_i(\bar{a}_{ij}^{\alpha\beta}D_j v^{\beta}) = 0 \quad&\text{in}\quad \Omega_{cr}^{+},\\
	\bar{a}_{ij}^{\alpha\beta}D_j v^\beta n_i = 0 \quad &\text{on}\quad \Gamma_{cr}.
	\end{cases}
	\end{equation*}
	Since the problem has coefficients depending only on $y^d$, the Lipschitz estimate holds:
	\begin{equation}\label{eqn-200611-0451}
	\norm{D\vv}_{L_\infty(\Omega_{cr/2}^{+})} \leq N(d,m,\kappa,\tilde{p})
(|D\vv|^{\tilde{p}})^{1/\tilde{p}}_{\Omega_{cr}^{+}}.
	\end{equation}
This can be obtained by using \cite[Lemma~3.7]{MR3013054}, where the right-hand side is given by the $L_2$ average. To replace the $L_2$ average on the right-hand side with the $L_{\tilde{p}}$ average, we apply a standard iteration argument which involves rescaling, H\"older's inequality, and Young's inequality. See for example \cite[Lemma~8.18]{MR3099262}, or \eqref{eqn-200714-1113}-\eqref{eqn-200715-1202} below. Now, we define
	\begin{equation*}
		W:=\begin{cases}
			|D\vw|\quad &\Omega_{cr}^+,\\
			|D\vu|\quad &\Omega_{cr}^-,
		\end{cases}
		\quad \text{and}\quad
		V:=\begin{cases}
			|D\vv|\quad &\Omega_{cr}^+,\\
			0\quad &\Omega_{cr}^-.
		\end{cases}
	\end{equation*}
The estimate \eqref{eqn-200610-0250} then follows from \eqref{eqn-200611-0448} and H\"older's inequality
\begin{equation}\label{eqn-200715-1236}
	\norm{D\vec{u}}_{L_{\tilde{p}}(\Omega_{cr}^-)} \le N (\theta r^d)^{1/\tilde{p}-1/q}\norm{D\vec{u}}_{L_q(\Omega_r(x))}.
\end{equation}
The estimate \eqref{eqn-200611-0449} follows from \eqref{eqn-200611-0451}, the inequality $|D\vv|\leq |D\vu| + |D\vw|$, and \eqref{eqn-200611-0448}.

	\textbf{Case 3}: $x\notin\p\Omega$ and $B_{cr/4}(x)\cap \p\Omega \neq \emptyset$. In this case, we find a point $y\in\p\Omega$ satisfying $|x-y|< cr/4$. Let $r_1=(1-c/4)r$. By the previous case, we get the deposition of $u$ in $\Omega_{cr_1}(y)$, and $V,W$ satisfies
\begin{align*}
	(W^{\tilde{p}})^{1/\tilde{p}}_{\Omega_{cr_1}(y)} &\le N \theta^{1/\tilde{p} - 1/q}(|D\vu|^{q})^{1/q}_{\Omega_{r_1}(y)} + N(F^{\tilde{p}})^{1/\tilde{p}}_{\Omega_{r_1}(y)},\\
	\norm{V}_{L_\infty(\Omega_{cr_1/2}(y))} &\le N (|D\vu|^{q})^{1/q}_{\Omega_{r_1}(y)} + N(F^{\tilde{p}})^{1/\tilde{p}}_{\Omega_{r_1}(y)}.
\end{align*}
Finally, it remain to observe that by the triangle inequality,
$$
\Omega_{cr/8}(x)\subset \Omega_{cr_1/2}(y),\quad
\Omega_{cr/4}(x)\subset \Omega_{cr_1}(y),\quad
\Omega_{r_1}(y)\subset \Omega_r(x).
$$
The proposition is proved.
\end{proof}

\subsection{Level set argument and proof of Theorem \ref{thm-200609-0520}}\label{sec-200626-1027}
Once we have the decomposition lemma, the proof of Theorem \ref{thm-200609-0520} is more or less standard by using a level set argument. More precisely, we prove by deriving that the measure of the level set of $\mathcal{M}(|D\vu|^p)$ decays with a proper rate. Here $\mathcal{M}$ is the Hardy-Littlewood maximal function: for $f\in L_{1,loc}(\Omega)$,
\begin{equation*}
	\mM(f):=\sup_{x\in \bR^d,r>0} \fint_{B_r(x)} |f(y)|\mathbb{I}_{\Omega}\,dy.
\end{equation*}
Such idea was introduced in \cite{MR1486629} and is relied on a measure theoretical lemma in \cite{MR563790}. In this section, we give a sketch of the proof. For details, we refer the reader to \cite[Section~5]{MR3013054} and \cite[Section~5.2, 5.3]{2019arXiv190400545C}.

For two constants $(\tilde{p},q)$ with $\tilde{p}<q < p$ to be chosen later, we interpolate the $W^1_{\tilde{p}}$-estimate and the Lipschitz estimate in Proposition \ref{prop-200611-0515}, by comparing the following two level sets:
\begin{equation*}
	\mA(s):=\{x\in \Omega:\ (\mM(U^{\tilde{p}}))^{1/\tilde{p}} \geq s\}
\end{equation*}
and
\begin{equation*}
	\mB(s):= \{x\in \Omega:\ (\mM(U^{q}))^{1/{q}} + \theta^{-1/\tilde{p} + 1/{q}} (\mM(F^{\tilde{p}}))^{1/\tilde{p}} \geq s\},
\end{equation*}
where
\begin{equation*}
	U:= \sum_{i,\alpha} |D_i u^\alpha|\quad \text{and}\quad F:=\sum_{i,\alpha} |f_i^\alpha|.
\end{equation*}
The decomposition in Proposition \ref{prop-200611-0515} leads to the following stability-type result.
\begin{lemma}\label{lem-200622-1239}
	Suppose $0\in\overline{\Omega}$, under the same hypothesis of Proposition \ref{prop-200611-0515}, there exist constants $k_1$ and $N$ depending on $d$, $m$, $\kappa$, $p$, $\tilde p$, and $M$, such that for all $k>\max\{2^{d/\tilde{p}},k_1\}$ and $s>0$, the following holds: suppose for some $r<R_0$,
	\begin{equation}\label{eqn-200622-1235}
		|\Omega_{r/(32\sqrt{d}M)}\cap \mA(ks)| \geq N k^{-\tilde{p}}\theta^{1 - \tilde{p}/q}|\Omega_{r/(32\sqrt{d}M)}|,
	\end{equation}
then $\Omega_{r/(32\sqrt{d}M)} \subset \mB(s)$.
\end{lemma}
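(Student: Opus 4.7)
The plan is to argue by contrapositive: assume some $x_0\in\Omega_{r/(32\sqrt{d}M)}\setminus\mB(s)$ exists, and then bound
\[
|\Omega_{r/(32\sqrt{d}M)}\cap\mA(ks)|\le Nk^{-\tilde p}\theta^{1-\tilde p/q}|\Omega_{r/(32\sqrt{d}M)}|.
\]
The hypothesis $x_0\notin\mB(s)$ gives the two key controls $\mM(U^q)(x_0)^{1/q}<s$ and $\mM(F^{\tilde p})(x_0)^{1/\tilde p}<\theta^{1/\tilde p-1/q}s$. I would apply Proposition~\ref{prop-200611-0515} at the center $x=0\in\overline{\Omega}$ with the given radius $r$ to obtain non-negative $V,W$ on $\Omega_{r/(4\sqrt{d}M)}(0)$ with $U:=|D\vu|\le V+W$, and convert the right-hand sides of \eqref{eqn-200610-0250} and \eqref{eqn-200611-0449} into multiples of $s$ by inserting the $x_0$-bounds (using the inclusion $B_r(0)\subset B_{2r}(x_0)$ and a lower volume bound on $|\Omega_r(0)|$ coming from $\Omega_r^+$ in Assumption~\ref{ass-200604-1252}(b)).

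Next, for $y\in\Omega_{r/(32\sqrt{d}M)}$ I would split $\mM(U^{\tilde p})(y)$ at the radius threshold $\rho_0:=r/(16\sqrt{d}M)$. Since $|y-x_0|\le 2\cdot r/(32\sqrt{d}M)=\rho_0$, any ball $B_\rho(y)$ with $\rho>\rho_0$ satisfies $B_\rho(y)\subset B_{2\rho}(x_0)$, so H\"older's inequality yields
\[
\Bigl(\fint_{B_\rho(y)}U^{\tilde p}\mathbb{I}_\Omega\Bigr)^{1/\tilde p}\le \Bigl(\fint_{B_\rho(y)}U^q\mathbb{I}_\Omega\Bigr)^{1/q}\le 2^{d/q}\mM(U^q)(x_0)^{1/q}<2^{d/\tilde p}s<ks,
\]
using $k>2^{d/\tilde p}$. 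Hence, for $y\in\mA(ks)$ the threshold $ks$ must be attained by a ball of radius $\rho\le\rho_0$; by a direct computation, such balls satisfy $B_\rho(y)\cap\Omega\subset\Omega_{r/(8\sqrt{d}M)}(0)$, which places us inside the region where $U\le V+W$ holds.

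On that region \eqref{eqn-200611-0449} together with the $x_0$-bounds yields $\|V\|_{L_\infty(\Omega_{r/(8\sqrt{d}M)}(0))}\le N_0 s$ for some $N_0=N_0(d,m,\kappa,M,q,\tilde p)$. Choosing $k_1:=4N_0\cdot 2^{d/\tilde p}$ forces the $V$-contribution to the small-radius part of $\mM(U^{\tilde p})(y)$ to be at most $(ks/2)^{\tilde p}$, so if $y\in\mA(ks)$ the $W$-part alone must reach $ks/2$. The weak-type $(1,1)$ inequality for $\mM$ applied to $W^{\tilde p}\mathbb{I}_{\Omega_{r/(4\sqrt{d}M)}(0)}$ then produces
\[
|\Omega_{r/(32\sqrt{d}M)}\cap\mA(ks)|\le N(ks)^{-\tilde p}\|W\|_{L_{\tilde p}(\Omega_{r/(4\sqrt{d}M)}(0))}^{\tilde p}.
\]
Plugging \eqref{eqn-200610-0250} into the right-hand side and using the $x_0$-bounds a second time yields $\|W\|_{L_{\tilde p}}^{\tilde p}\le N\theta^{1-\tilde p/q}s^{\tilde p}|\Omega_{r/(4\sqrt{d}M)}(0)|$, and the volume comparability $|\Omega_{r/(4\sqrt{d}M)}(0)|\le N(d,M)|\Omega_{r/(32\sqrt{d}M)}|$ coming again from Assumption~\ref{ass-200604-1252}(b) closes the chain.

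The step requiring the most care is the geometric bookkeeping: pinning down a single radius threshold $\rho_0$ for which the big-ball inclusion $B_\rho(y)\subset B_{2\rho}(x_0)$ and the small-ball inclusion $B_\rho(y)\cap\Omega\subset\Omega_{r/(8\sqrt{d}M)}(0)$ both hold uniformly in $y\in\Omega_{r/(32\sqrt{d}M)}$, and extracting the two-sided volume comparison $|\Omega_{\rho}(0)|\asymp \rho^d$ at boundary points from Assumption~\ref{ass-200604-1252}(b). Once those geometric facts are in place, the rest is the standard Caffarelli--Peral level-set bookkeeping as in \cite{MR3013054}.
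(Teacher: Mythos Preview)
Your proposal is correct and follows essentially the same contrapositive argument as the paper: pick a point $x_0\in\Omega_{r/(32\sqrt{d}M)}\setminus\mB(s)$, apply Proposition~\ref{prop-200611-0515} at the center $0$ to get $U\le V+W$ with $\|V\|_{L_\infty}\le N_1s$ and the $L_{\tilde p}$-average of $W$ bounded by $N_1\theta^{1/\tilde p-1/q}s$, split the maximal function at the radius $r/(16\sqrt{d}M)$ so that large radii are handled by $B_\rho(y)\subset B_{2\rho}(x_0)$ and small radii force the $W$-part to exceed $ks/2$, and conclude via the weak-$(1,1)$ inequality. The paper's version is slightly terser on the volume-comparison step and takes $k_1=2N_1$ rather than your larger constant, but the structure and all key inclusions are identical.
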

\begin{proof}
We prove the lemma by contradiction. Suppose that there exists some $y\in \Omega_{r/(32\sqrt{d}M)} \cap (\mB(s))^c$. Then by definition,
\begin{equation}\label{eqn-200624-0207}
	(\mM(U^q))^{1/q}(y) + \theta^{-1/\tilde{p} + 1/q} (\mM(F^{\tilde{p}}))^{1/\tilde{p}}(y) < s.
\end{equation}
Furthermore, we can decompose the solution according to Proposition \ref{prop-200611-0515} to obtain $V$ and $W$ on $B_{r/(4\sqrt{d}M)}$  satisfying $U\leq V+W$ on $B_{r/(4\sqrt{d}M)}$,
\begin{equation}
                                \label{eq8.32}
\norm{V}_{L_\infty(B_{r/(8\sqrt{d}M)})} \leq N_1s,\quad\text{and}\quad (W^{\tilde{p}})^{1/\tilde{p}}_{B_{r/(4\sqrt{d}M)}} < N_1
\theta^{1/\tilde{p}-1/q}s.
\end{equation}
Choose $k_1=2N_1$. Now for any point $z\in\Omega_{r/(32\sqrt{d}M)}\cap \mA(k s)$, by definition, $\mM(U^{\tilde{p}})^{1/\tilde{p}}(z)\geq k s$. Furthermore, the condition $k>2^{d/\tilde{p}}$, \eqref{eqn-200624-0207}, and H\"older's inequality guarantee that
\begin{equation}\label{eqn-200713-1237}
	\Big(\fint_{B_\tau(z)}U^{\tilde{p}}\Big)^{1/\tilde{p}}\geq k s
\end{equation}
can only occur at small scale $\tau<r/(16\sqrt{d}M)$, since otherwise because $B_\tau(z)\subset B_{2\tau}(y)$,
	\begin{equation*}
		\Big(\fint_{B_\tau(z)}U^{\tilde{p}}\Big)^{1/\tilde{p}} \leq 2^{d/\tilde{p}}\Big(\fint_{B_{2\tau}(y)}U^{\tilde{p}}\Big)^{1/\tilde{p}} \leq 2^{d/\tilde{p}} (\mM(U^q))^{1/q}(y) < ks,
	\end{equation*}
which contradicts \eqref{eqn-200713-1237}.

Now at such scale, by \eqref{eq8.32},
\begin{equation*}
	\norm{V}_{L_\infty(B_\tau(z))}\leq \norm{V}_{L_\infty(B_{r/(8\sqrt{d}M)})} \leq k_1 s/2< ks/2,
\end{equation*}
so we must have
$$(\mM(W^{\tilde{p}}\mathbb{I}_{B_{r/(4\sqrt{d}M)}}))^{1/\tilde{p}}(z) \geq \Big(\fint_{B_\tau(z)}W^{\tilde{p}}\Big)^{1/\tilde{p}} >\kappa s/2.$$
Since this is true for any $z\in \Omega_{r/(32\sqrt{d}M)}\cap\mA(ks)$, it follows from the weak-type $(1,1)$ estimate of the Hardy-Littlewood maximal function and \eqref{eq8.32} that
\begin{align*}
|\Omega_{r/(32\sqrt{d}M)}\cap \mA(ks)|&\le \Big|\big\{(\mM(W^{\tilde{p}}\mathbb{I}_{B_{r/(4\sqrt{d}M)}}))^{1/\tilde{p}}(z)>\kappa s/2\big\} \Big|\\
&\leq N\frac{\norm{W}_{L_{\tilde{p}(B_{r/(4\sqrt{d}M)})}}^{\tilde{p}}}{(ks)^{\tilde{p}}}\\
& < N k^{-\tilde{p}}\theta^{1 - \tilde{p}/q}|\Omega_{r/(32\sqrt{d}M)}|,
\end{align*}
which contracts \eqref{eqn-200622-1235} and finishes the proof of the lemma.
\end{proof}

From Lemma \ref{lem-200622-1239}, we can prove the following decay estimate by the ``crawling of ink spots'' lemma.
\begin{corollary}\label{cor-200624-0227}
 	Under the same hypothesis of Proposition \ref{prop-200611-0515}, there exists a constant $N=N(d,m,\kappa,q,\tilde p,M)>0$ such that for $k>\{2^{d/p},k_1,k_2\}$ and
	\begin{equation}\label{eqn-200624-0108}
		s> s_0:= N\theta^{1/q-1/\tilde{p}}\norm{U}_{L_{\tilde{p}}}/|B_{R_0/(32\sqrt{d}M)}|^{1/\tilde{p}},
	\end{equation}
	we have
	\begin{equation*}%\label{eqn-200622-1235b}
	|\mA(ks)| \leq N k^{-\tilde{p}}\theta^{1 - \tilde{p}/q}|\mB(s)|,
	\end{equation*}
were $k_1$ is the constant in Lemma \ref{lem-200622-1239} and $k_2$ is a constant such that
	\begin{equation}\label{eqn-200624-0159}
		Nk_2^{-\tilde{p}}\theta^{1/\tilde{p}-1/q}<1/3.
	\end{equation}
\end{corollary}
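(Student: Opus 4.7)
The plan is to deduce this corollary from Lemma \ref{lem-200622-1239} by a Vitali-type covering argument, namely the ``crawling of ink spots'' lemma of Krylov--Safonov (the measure-theoretic lemma of \cite{MR563790}). That lemma roughly states: if $A\subset B$ are measurable subsets of a domain, and if for any ball $B_r(x)$ with $r\le R_0/(32\sqrt dM)$ for which $|A\cap B_r(x)\cap\Omega|\ge\epsilon|B_r(x)\cap\Omega|$ one has $B_r(x)\cap\Omega\subset B$, and additionally $|A|<\epsilon|\Omega_{R_0/(32\sqrt dM)}|$ for some reference scale, then $|A|\le N\epsilon|B|$. We apply this with $A=\mA(ks)$, $B=\mB(s)$, and $\epsilon:=Nk^{-\tilde p}\theta^{1-\tilde p/q}$, where $N$ is the constant from Lemma \ref{lem-200622-1239}.

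The first step is to verify the ``starting'' smallness hypothesis: I would use the weak type $(1,1)$ bound for the Hardy--Littlewood maximal function to estimate
\begin{equation*}
|\mA(ks)|\le N\frac{\norm{U}_{L_{\tilde p}}^{\tilde p}}{(ks)^{\tilde p}}.
\end{equation*}
For this to fall below $\epsilon|\Omega_{R_0/(32\sqrt dM)}|$ it suffices to require
\begin{equation*}
s>N\theta^{1/q-1/\tilde p}\norm{U}_{L_{\tilde p}}/|B_{R_0/(32\sqrt dM)}|^{1/\tilde p},
\end{equation*}
which is precisely the definition of $s_0$ in \eqref{eqn-200624-0108}. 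This provides the global smallness needed to launch the covering argument.

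Next, I would check the local dichotomy needed for crawling of ink spots. For any $x\in\overline\Omega$ and any $r\in(0,R_0]$, Lemma \ref{lem-200622-1239} (applied at the point $x$ and scale $r$, after translating coordinates so that the ball is centered at $x$) tells us that whenever
\begin{equation*}
|\Omega_{r/(32\sqrt dM)}(x)\cap\mA(ks)|\ge\epsilon|\Omega_{r/(32\sqrt dM)}(x)|,
\end{equation*}
the entire set $\Omega_{r/(32\sqrt dM)}(x)$ lies in $\mB(s)$. The conditions $k>k_1$ and \eqref{eqn-200624-0159} on $k_2$ guarantee that the constants in Lemma \ref{lem-200622-1239} are compatible with the crawling lemma (in particular $\epsilon<1/3$, which is the threshold making the covering iteration converge).

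With the starting smallness and the local dichotomy established, Safonov's covering lemma (see, e.g., the presentation in \cite{MR3013054,2019arXiv190400545C}) applies verbatim and yields
\begin{equation*}
|\mA(ks)|\le N\epsilon|\mB(s)|=Nk^{-\tilde p}\theta^{1-\tilde p/q}|\mB(s)|,
\end{equation*}
which is the desired estimate. I do not expect any serious obstacle: Proposition \ref{prop-200611-0515} did the real analytic work of decomposing $|D\vu|$, Lemma \ref{lem-200622-1239} translated that decomposition into a pointwise stability statement about the maximal functions, and the remaining content here is the standard Calder\'on--Zygmund/Krylov--Safonov measure-theoretic packaging. The only mildly delicate point is the bookkeeping of the constants ($k_1$, $k_2$, the $32\sqrt dM$ factor) so that the radii in Lemma \ref{lem-200622-1239} and in the covering lemma are consistent, which is handled by the explicit quantitative choices above.
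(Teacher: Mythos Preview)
Your proposal is correct and matches the paper's own approach: the paper likewise invokes the Krylov--Safonov ``crawling of ink spots'' lemma (or equivalently a stopping-time argument as in \cite[Corollary~5.7]{2019arXiv190400545C}), noting that condition \eqref{eqn-200624-0108} permits initiating the argument at radius $R_0$ and that \eqref{eqn-200624-0159} together with the Lebesgue differentiation theorem ensures termination. Your explanation of the roles of $s_0$, $k_1$, and $k_2$ is consistent with this.
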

Such measure theoretical lemma in \cite[Section~2]{MR563790}. Or, one can refer to the proof of \cite[Corollary~5.7]{2019arXiv190400545C} via a stopping time argument. Note that the condition \eqref{eqn-200624-0108} allows us to start the stopping time argument at the radius $r=R_0$, while combined with the Lebesgue differentiation theorem, \eqref{eqn-200624-0159} means that such argument would stop at finite time (radius).

As the last step, the $L_p$ estimate follows by estimating
\begin{equation}\label{eqn-200703-1217}
	\int_0^\infty |\mA(s)|s^{p-1}\,ds.
\end{equation}
Let us now choose some parameters: fix $\tilde{p}=(p+2)/3$ and $q=(p+1)/2$ both lying between $1$ and $p$, and then we fix $k$ to be some constant satisfying the condition in Corollary \ref{cor-200624-0227} with $\theta=1$, so that it is satisfied for any $\theta\le 1$. For \eqref{eqn-200703-1217}, we take the change of variable $s\mapsto ks$ and then split the integral into two parts. For $s\leq s_0$, we apply the weak-$(1,1)$ estimate. For $s>s_0$, we apply Corollary \ref{cor-200624-0227}. From these, we obtain
\begin{align*}
 \int_0^\infty |\mA(s)|s^{p-1}\,ds \leq N_1 R_0^{d(\tilde{p}-p)/\tilde{p}}\norm{U}_{L_{\tilde{p}}}^p + N_2\int_0^\infty k^{p-\tilde{p}}\theta^{1-\tilde{p}/q}|\mB(s)|s^{p-1}\,ds,
\end{align*}
where $N_1=N_1(d,m,\kappa,p,M,k,\theta)$ and $N_2=N_2(d,m,\kappa,p,M)$. Splitting $\mB(s)$ as
\begin{equation*}
	\mB(s)\subset \{(\mM(U^q))^{1/q}>s/2\}\cup \{\theta^{-1/\tilde{p}+1/q} (\mM(F^{\tilde{p}}))^{1/\tilde{p}} \geq s/2\}
\end{equation*}
and using the representation formula for the $L_p$ norm, we obtain
\begin{multline*}%\label{eqn-2000626-1053}
\norm{\mM(U^{\tilde{p}})}_{L_{p/\tilde{p}}}^{p/\tilde{p}} \leq N_3\left(R_0^{d(\tilde{p}-p)/\tilde{p}}\norm{U}_{L_{\tilde{p}}}^p + \norm{\mM(F^{\tilde{p}})}_{L_{p/\tilde{p}}}^{p/\tilde{p}}\right)\\
+ N_4k^{p-\tilde{p}}\theta^{1-\tilde{p}/q}\norm{\mM(U^q)}_{L_{p/q}}^{p/q},
\end{multline*}
where $N_3=N_3(d,m,\kappa,p,M,k,\theta)$ and $N_4=N_4(d,m,\kappa,p,M)$.
Now, noting that $U\in L_p(\Omega)$ and $\tilde{p}<q<p$, we can use the Hardy-Littlewood maximal function estimate to obtain
\begin{equation*}
		\norm{U}_{L_p}^p \leq \tilde{N}_3\left(R_0^{d(\tilde{p}-p)/\tilde{p}}\norm{U}_{L_{\tilde{p}}(\Omega)}^p + \norm{F}_{L_p}^p\right) + \tilde{N}_4k^{p-\tilde{p}}\theta^{1-\tilde{p}/q}\norm{U}_{L_p}^p.
\end{equation*}
%\begin{equation}\label{eqn-2000626-1053}
%\begin{split}
%	\int_0^\infty |\mA(s)|s^{p-1}\,ds \leq& N_0({\color{cyan}*},k,\theta)R_0^{d(\tilde{p}-p)/\tilde{p}}\norm{U}_{L_{\tilde{p}}(\Omega)}^p\\
%	&+ N({\color{cyan}*,k,\theta})\int_0^\infty |\{\theta^{-1/\tilde{p}+1/q} (\mM(F^{\tilde{p}}))^{1/\tilde{p}} \geq s/2\}| \\
%	&+ Nk^{p-\tilde{p}}\theta^{1-\tilde{p}/q}\int_0^\infty|\{(\mM(U^q))^{1/q}>s/2\}|s^{p-1}\,ds.
%\end{split}
%\end{equation}
By H\"older's inequality and Young's inequality, we can replace the $L_{\tilde{p}}$ norm on the right-hand side by
\begin{equation*}
		\tilde{N}_3R_0^{d(1-p)}\norm{U}_{L_1(\Omega)}^p + \frac{1}{3}\norm{U}_{L_p}^p,
\end{equation*}
where $\tilde{N}_3$ is another constant still depending on $(d,m,\kappa,p,M,k,\theta)$. Finally, we absorb the $L_p$ norm  on the right-hand side by choosing $\theta$ small enough, noting that $\tilde{N}_4$ is independent of $\theta$.

\subsection{Solvability and proof of Theorem \ref{thm-200609-0525}}\label{sec-200705-1036}
Before we prove Theorem \ref{thm-200609-0525}, let us first show how to obtain Remark \ref{rmk-200619-1150} (b). For $p>2$, actually having the Sobolev-Poincar\'e inequality \eqref{eqn-200619-0355} in hand, under the assumptions of Theorem \ref{thm-200609-0520}, instead of the a priori estimate, we can prove a regularity result: any solution $\vu\in\dot{L}^1_2$ with $\vf_i\in L_p(\Omega)$ is also in $\dot{L}^1_p$. The proof is almost the same as that of Theorem \ref{thm-200609-0520}. First, by \eqref{eqn-200619-0355}, Gehring's lemma, and the standard $L_2$ estimate, we can derive the following reverse H\"older inequality: for any $x\in\overline{\Omega}$ and $r<R_0$,
\begin{equation}\label{eqn-200620-0607}
	(|D\vu|^{2\mu})^{1/(2\mu)}_{\Omega_{r/2}(x)} \leq N \left((|D\vu|^{2})^{1/2}_{\Omega_{r}(x)} + |F|^{2\mu})^{1/(2\mu)}_{\Omega_{r}(x)}\right),
\end{equation}
where $\mu>1$ is a constant depending on $d$, $m$, $\kappa$, $p$, $\epsi$, and $N_{\Omega,\epsi}$. The proof of such reverse H\"older inequality is classical and can be found in \cite[Section~6.5]{MR3099262} and \cite[Lemma~3.3]{2019arXiv190400545C}. This improves the regularity by a little bit, from $\dot{L}^1_2$ to $\dot{L}^1_{2\mu}$. To further improve the regularity all the way up to $\dot{L}^1_p$, we apply the level set argument. We first apply Proposition \ref{prop-200611-0515} with $\tilde{p}=2$ and $q=2\mu$ to find $V$ and $W$. Then, by \eqref{eqn-200620-0607}, the estimates \eqref{eqn-200610-0250} and \eqref{eqn-200611-0449} can be improved:
\begin{equation*}
(W^2)^{1/2}_{\Omega_{r/(4\sqrt{d}M)}(x)} \le N \theta^{1/2 - 1/(2\mu)}(|D\vu|^{2})^{1/2}_{\Omega_r(x)} + N(F^{2\mu})^{1/(2\mu)}_{\Omega_{r}(x)},
\end{equation*}
\begin{equation*}
\norm{V}_{L_\infty(\Omega_{r/(8\sqrt{d}M)}(x))} \le N(|D\vu|^2)^{1/2}_{\Omega_r(x)} + N(F^{2\mu})^{1/(2\mu)}_{\Omega_{r}(x)}.
\end{equation*}
From this, the aforementioned level set argument shows that $\vu\in \dot{L}^1_p(\Omega)$ and
\begin{equation*}
	\norm{D\vu}_{L_p(\Omega)} \leq N(d,m,\kappa,p,\epsi,N_{\Omega,\epsi})\left( R_0^{d(1/p-1/2)}\norm{D\vu}_{L_2(\Omega)} + \norm{F}_{L_p(\Omega)}\right).
\end{equation*}
Furthermore, the $\norm{D\vu}_{L_2(\Omega)}$ term on the right-hand side can be dropped by the $L_2$ estimate, H\"older's inequality, and the condition $|\Omega|<\infty$.

Once we have the unique solvability and estimate for $p>2$, the result for $p\in(1,2)$ can be obtained by duality.

Now we are in the position of proving Theorem \ref{thm-200609-0525}.

\begin{proof}[Proof of Theorem \ref{thm-200609-0525}]
	We first deal with the nonzero $\vvarphi$ and $\vg$. Let $p'$ be the conjugate exponent of $p$: $1/p'+1/p=1$ and $(p^*)'$ be the conjugate exponent of $p^*$. When $p^*=1$, $(p^*)'=\infty$.
For each $\alpha$, consider the linear functional $\ell$, defined by
	\begin{equation*}
		\ell(\phi) := \int_{\p\Omega} \phi\varphi^\alpha - \int_{\Omega}\phi g^\alpha.
	\end{equation*}
Due to the embeddings
\begin{equation*}
	W^1_{p'}(\Omega) \hookrightarrow W^{1-1/p'}_{p'}(\p\Omega)\quad \text{and}\quad W^1_{p'}(\Omega)\hookrightarrow L_{(p^*)'}(\Omega)
\end{equation*}
and the compatibility condition
\begin{equation*}
	\int_{\p\Omega} \varphi^\alpha = \int_{\Omega} g^\alpha,
\end{equation*}
we know that $\ell$ is well-defined and bounded on the homogeneous space $\dot{L}^1_{p'}(\Omega)$ equipped with the norm $\sum_i\norm{D_i \phi}_{L_{p'}(\Omega)}$. According to \cite[Theorem~II.8.2]{MR2808162}, which is obtained by the Hahn-Banach theorem and the Riesz representation theorem, we can find functions $h_i^\alpha \in L_p(\Omega)$, such that
\begin{equation*}
	\ell(\phi) = \int_\Omega h_i^\alpha D_i \phi.
\end{equation*}
Furthermore, we have the bound
\begin{equation*}%\label{eqn-200619-1152}
	\sum_i\norm{h_i^\alpha}_{L_p(\Omega)} = \norm{\ell}_{(\dot{L}^1_{p'}(\Omega))'} \le N \norm{\varphi^\alpha}_{W^{-1/p}_p(\p\Omega)} + N\norm{g^\alpha}_{L_{p^*}(\Omega)}.
\end{equation*}
Noting the weak formulation \eqref{eqn-200703-0330} and the fact that on a Lipschitz domain the space $\dot{L}^1_{p'}$ can be identified with the space $W^1_{p'}(\Omega)\cap\{\int_\Omega \phi =0\}$, now we only need to deal with \eqref{eqn-200622-1036}-\eqref{eqn-200609-0514} with $\vg=\vvarphi=0$, and $\vf_i$ replaced with $\vf_i + \vec{h}_i$.

After this reduction, the situation is the same with Theorem \ref{thm-200609-0520}. The estimate and the solvability follows from Remark \ref{rmk-200619-1150}.
\end{proof}

\section{Weighted estimate for Robin boundary value problems}
                            \label{sec4}
\subsection{Properties of \texorpdfstring{$A_p$}{Ap} weights and an interpolation lemma}
In this section, we summarize some properties of $A_p$ weights which will be used in this paper. The properties in Lemma \ref{lem-200615-1006} are standard. See for example, \cite{MR2445437}.
\begin{lemma}\label{lem-200615-1006}
	Suppose that $\omega\in A_p$ for some $p\in(1,\infty)$. There exist constants $s$ and $\hat{p}$ depending on $d$, $p$, and $[\omega]_{A_p}$, such that the following hold.
	\begin{enumerate}[wide, labelwidth=0pt, labelindent=0pt]
		\item There exists a constant $s>1$ depending on $d$, $p$, and $[\omega]_{A_p}$, such that for any ball $B\subset \bR_d$,
		\begin{equation}\label{eqn-200616-0917}
			\left(\fint_B \omega(x)^{s}\,dx\right)^{1/s} \leq N \fint_B \omega(x)\,dx,
		\end{equation}
		where $N=N(d,p,[\omega]_{A_p})$. A weight $\omega$ satisfying  \eqref{eqn-200616-0917} is said to belong to the reverse H\"older class $RH_s$, where the optimal constant $N$ is usually denoted to be $[\omega]_{RH_s}$.
		\item There exists a constant $\hat{p}\in(1,p)$ depending on $d$, $p$, and $[\omega]_{A_p}$, such that $\omega\in A_{p/\hat{p}}$.
		\item Let $p'$ be the conjugate exponent of $p$, namely $1/p+1/p'=1$, then $\omega^{-p'/p} \in A_{p'}$, where $[\omega^{-p'/p}]_{A_{p'}} = [\omega]^{p'/p}_{A_p}$.
		\item For any constant $\delta\in(0,1)$, $\omega^\delta \in A_{\delta p + 1 - \delta}$ with
		\begin{equation*}
			[\omega^\delta]_{A_{\delta p + 1 - \delta}} \leq [\omega]_{A_p}^\delta.
		\end{equation*}
	\end{enumerate}
\end{lemma}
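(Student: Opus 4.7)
The proof proposal treats each item in turn, relying on direct manipulations of the definitions and on the classical self-improvement machinery for $A_p$ weights. I would not attempt to reprove these four properties from scratch in the paper; rather, I would cite a standard reference (such as \cite{MR2445437}) and include only the short computations needed to get the explicit constant in (c) and the exponent bookkeeping in (d), since (a) and (b) are the truly nontrivial assertions.

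For (a), the plan is to invoke the Coifman--Fefferman / Muckenhoupt self-improvement result: any $A_p$ weight is in $A_\infty$, and any $A_\infty$ weight satisfies a reverse H\"older inequality with an exponent $s>1$ depending only on $d$, $p$, and $[\omega]_{A_p}$. The standard argument goes through a Calder\'on--Zygmund stopping-time decomposition together with the good-$\lambda$ inequality $\omega(\{x\in B:\omega(x)>\lambda\})\le C(\lambda/\omega(B))^{-\epsilon}\omega(B)$, which after integration yields the $L^s$ estimate. For (b), once (a) is in hand the self-improvement into a smaller $A_q$ class is obtained by applying the reverse H\"older inequality in (a) to $\omega^{-1/(p-1)}$ (which also lies in the corresponding $A_{p'}$ class by duality, see (c)); writing $\fint_B\omega^{-1/(p-1)\cdot s'}\le N(\fint_B\omega^{-1/(p-1)})^{s'}$ and translating back then produces an exponent $\hat p\in(1,p)$ with $\omega\in A_{p/\hat p}$. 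This step is the main obstacle in the sense that everything else is bookkeeping, but it is entirely textbook material.

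For (c), the computation is purely algebraic. Using $p'=p/(p-1)$ one has $p'-1=1/(p-1)$, so
\begin{equation*}
[\omega^{-p'/p}]_{A_{p'}}=\sup_B\Bigl(\fint_B\omega^{-1/(p-1)}\Bigr)\Bigl(\fint_B\omega\Bigr)^{p'-1}=\sup_B\Bigl[\Bigl(\fint_B\omega\Bigr)\Bigl(\fint_B\omega^{-1/(p-1)}\Bigr)^{p-1}\Bigr]^{1/(p-1)}=[\omega]_{A_p}^{p'/p},
\end{equation*}
which gives both the conclusion and the sharp constant. I would present exactly this one-line identification.

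For (d), set $q=\delta p+1-\delta$ so that $q-1=\delta(p-1)$ and $-1/(q-1)=-1/(\delta(p-1))$. Jensen's inequality (with $\delta\in(0,1)$) yields $\fint_B\omega^\delta\le(\fint_B\omega)^\delta$, hence
\begin{equation*}
\Bigl(\fint_B\omega^\delta\Bigr)\Bigl(\fint_B\omega^{-1/(p-1)}\Bigr)^{\delta(p-1)}\le\Bigl[\Bigl(\fint_B\omega\Bigr)\Bigl(\fint_B\omega^{-1/(p-1)}\Bigr)^{p-1}\Bigr]^\delta\le[\omega]_{A_p}^\delta,
\end{equation*}
which is precisely the claimed bound. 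In the write-up I would present (c) and (d) as short explicit computations and dispatch (a) and (b) by citation, since the entire lemma is used as a black-box tool later in the paper.
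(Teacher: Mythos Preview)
Your proposal is correct and aligns with the paper's treatment: the paper does not give a proof at all but simply declares the properties standard and cites \cite{MR2445437}. Your plan to cite for (a) and (b) while including the short algebraic verifications of (c) and (d) is therefore more detailed than what the paper actually does, but entirely consistent with it; the explicit computations you give for (c) and (d) are accurate.
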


The second lemma includes two embedding results for weighted spaces.
\begin{lemma}\label{lem-200624-0616}
	Suppose that $\omega\in A_p$ for some $p\in(1,\infty)$.
	\begin{enumerate}[wide, labelwidth=0pt, labelindent=0pt]
	\item For any $q\in [p,\infty)$, we have the embedding
	$$
L_{qs/(s-1)}\hookrightarrow L_{q,\omega}\hookrightarrow L_{q/p},
$$
where $s>0$ can be any constant satisfying \eqref{eqn-200616-0917}.
	\item For any $u\in W^1_{p,\omega}$, we have the following Poincar\'e type inequality
	\begin{equation*}
		\norm{u-c}_{L_{np/(n-1),\omega}(\Omega)} \leq N(d,p,M ,[\omega]_{A_p})|\Omega|^{1/n}|\omega(\Omega)|^{-1/(np)}
\norm{Du}_{L_{p,\omega}(\Omega)},
	\end{equation*}
	where
$$
c=\fint_\Omega u\quad \text{or}\quad
\frac 1{ \omega(\Omega)}\int_\Omega u\omega.
$$
	\end{enumerate}
\end{lemma}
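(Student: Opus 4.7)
The two parts of the lemma are essentially independent, so I would treat them separately.

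For part (a), both inclusions reduce to a single application of Hölder's inequality together with one of the weight-class properties collected in Lemma \ref{lem-200615-1006}. For the first embedding $L_{qs/(s-1)}\hookrightarrow L_{q,\omega}$ I would apply Hölder's inequality with conjugate exponents $s/(s-1)$ and $s$ to $|u|^q\omega$ on $\Omega$, obtaining
$$
\int_\Omega |u|^q\,\omega\le\Big(\int_\Omega|u|^{qs/(s-1)}\Big)^{(s-1)/s}\Big(\int_\Omega\omega^s\Big)^{1/s},
$$
and then invoke the reverse Hölder inequality \eqref{eqn-200616-0917} from Lemma \ref{lem-200615-1006}(a) to bound $\int_\Omega\omega^s$ by a constant multiple of $|\Omega|^{1-s}\omega(\Omega)^s$. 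For the second embedding $L_{q,\omega}\hookrightarrow L_{q/p}$ I would split $|u|^{q/p}=\bigl(|u|^{q/p}\omega^{1/p}\bigr)\omega^{-1/p}$ and apply Hölder with exponents $p$ and $p'=p/(p-1)$; the resulting factor $\int_\Omega\omega^{-p'/p}$ is finite by the $A_p$ condition itself.

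For part (b), the plan is to combine the classical weighted Sobolev--Poincaré inequality on balls for $A_p$ weights (Fabes--Kenig--Serapioni, Chanillo--Wheeden) with a chaining argument on the Lipschitz domain $\Omega$. The ball version asserts
$$
\Big(\frac{1}{\omega(B)}\int_B|u-u_B^\omega|^{dp/(d-1)}\,\omega\Big)^{(d-1)/(dp)}\le C\, r_B\,\Big(\frac{1}{\omega(B)}\int_B|Du|^p\,\omega\Big)^{1/p},
$$
and regrouping the normalizing factors (using $r_B\sim|B|^{1/d}$) produces exactly the geometric prefactor $|B|^{1/d}\omega(B)^{-1/(dp)}$ that appears on the right-hand side of the lemma. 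Globalizing this from balls to $\Omega$ would proceed by covering $\Omega$ with a Boman-type chain of balls, exploiting the fact that bounded Lipschitz domains are John domains; summing the local estimates along the chain and using the doubling property of $\omega$ yields the global estimate with the averages $c=\fint_\Omega u$ or $c=\omega(\Omega)^{-1}\int_\Omega u\,\omega$, which differ only by a constant controlled by $|\Omega|$ and $\omega(\Omega)$ via Jensen's inequality.

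The main obstacle in part (b) is bookkeeping: if one sums naively along the Boman chain the geometric factors $|B|^{1/d}\omega(B)^{-1/(dp)}$ of the local ball estimates, the accumulated constant does not automatically have the clean form $|\Omega|^{1/d}\omega(\Omega)^{-1/(dp)}$ claimed in the lemma. To close this gap I would rely on the $A_p$ reverse-doubling and the finite-overlap properties of the John chain to bound the sum by a single global factor of the correct shape, with constant depending only on $(d,p,M,[\omega]_{A_p})$ as required. Part (a) can be handled first since it is needed only in elementary form, whereas part (b) is the technical core and is what will be invoked in the subsequent level-set arguments for the weighted Robin estimates.
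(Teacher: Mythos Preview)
Your proposal is correct and matches the paper's treatment: for part (a) the paper dispenses with the embeddings by exactly the H\"older-plus-$A_p$ argument you outline, and for part (b) it does not give a proof but simply cites \cite[Theorem~1.5]{MR643158} (Fabes--Kenig--Serapioni) and \cite[Lemma~5.9]{YYY20}, which encode precisely the ball-level weighted Sobolev--Poincar\'e inequality you propose to chain. Rather than carrying out the Boman-chain bookkeeping yourself (the obstacle you flag is real but is handled in those references), you may follow the paper and cite them directly.
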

The first embedding can be proved simply by H\"older's inequality and the definition of $A_p$ weights. The second embedding was established in \cite[Theorem~1.5]{MR643158}. See also \cite[Lemma~5.9]{YYY20}.

The idea of proving the $W^1_{p,\omega}$ well-posedness is again by interpolation. Compared to Section \ref{sec-200626-1027}, here we need a modified level set argument which allows us to include the weight $\omega$. For brevity, we state a ready-to-use interpolation theorem, which can be simply derived from \cite[Theorem~3.1]{YYY20}. Recall from Lemma \ref{lem-200615-1006} that for any $A_p$-weight $\omega$, we can find $\hat{p}\in(1,p)$ and $s>1$ depending on $d,p,[\omega]_{A_p}$ such that $\omega\in A_{p/\hat{p}}$ and $\omega\in RH_s$.

\begin{theorem}\label{thm-200604-0840}
	Let $\Omega\subset \bR^d$ be a bounded Lipschitz domain with parameters $R_0$ and $M$, $p\in(1,\infty)$, $\omega\in A_p$, and $F$ be a non-negative function defined on $\Omega$. Let $p_1$, $p_2$, $p_3$, and $\nu$ be parameters satisfying
	\begin{equation*}
		p_1<p, \quad \nu\in[0,1), \quad p_3>\frac{ps}{s-1},\quad\text{and}\quad \frac{p_2}{1-\nu}\leq \hat{p}.
	\end{equation*}
Suppose that $U\in L_{p_1}(\Omega)$ is a non-negative function such that for any $x\in\overline{\Omega}$ and any $r<R_0$, we have a non-negative decomposition $U \leq V + W$ in $\Omega_r(x)$ with estimates
	\begin{equation*}
	\Big(\fint_{\Omega_r(x)} V^{p_3}\Big)^{1/p_3} \leq C_1 \left(\fint_{\Omega_{2r}(x)} U^{p_1}\right)^{1/p_1} + C_1 \left(|\Omega_{2r}(x)|^\nu\fint_{\Omega_{2r}(x)} F^{p_2}\right)^{1/p_2}
	\end{equation*}
	and
	\begin{equation*}
	\Big(\fint_{\Omega_r(x)} W^{p_1}\Big)^{1/p_1} \leq C_2 \left(|\Omega_{2r}(x)|^\nu\fint_{\Omega_{2r}(x)} F^{p_2}\right)^{1/p_2},
	\end{equation*}
	where $C_1$ and $C_2$ are constants independent of $U$, $F$, $x$, and $r$. Then if $F\in L_{p_0,\omega^{p_0/p}}$, we have $U\in L_{p,\omega}$ with
	\begin{equation*}
	\left[\frac{1}{\omega(\Omega)}\int_\Omega U^p\omega\right]^{\frac 1 p} \leq N \left(\frac{1}{|\Omega|}\int_\Omega U^{p_1}\right)^{\frac 1 {p_1}} + N \left(\frac{1}{\omega(\Omega)}\int_\Omega F^{p_0}\omega^{\frac {p_0} p}\omega\right)^{\frac 1 {p_0}}\left[\omega(\Omega)\right]^{\frac \nu {p_2}}.
	\end{equation*}
Here $p_0$ is a constant given by $1/p_0 = 1/p + \nu/p_2$ and $N$ is a constant depending only on $(R_0,M, C_1,C_2,d,p_1,p_2,p,[\omega]_{A_p})$.
\end{theorem}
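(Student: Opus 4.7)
The plan is to adapt the Calder\'on--Zygmund / good-$\lambda$ scheme used in Section~\ref{sec-200626-1027} to the weighted setting, just as in \cite[Theorem~3.1]{YYY20}, absorbing the fractional factor $|\Omega_{2r}(x)|^\nu$ into a fractional maximal operator on the data. Concretely, I would introduce the two level sets
$$
\mA(t) = \{x\in\Omega: (\mM(U^{p_1}\mathbb{I}_\Omega))^{1/p_1}(x) > t\},
$$
$$
\mB(t) = \Big\{x\in\Omega: (\mM(U^{p_1}\mathbb{I}_\Omega))^{1/p_1}(x) + K\,\mathcal{F}(x) > t\Big\},
$$
where $K$ is a large constant to be chosen, and $\mathcal{F}(x) := \sup_{r<R_0}\bigl(|\Omega_{2r}(x)|^\nu \fint_{\Omega_{2r}(x)}F^{p_2}\bigr)^{1/p_2}$ is the fractional-type maximal function dictated by the hypothesis.

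The first step is the deterministic comparison: at each point $x_0$ of $\mA(kt)\cap\Omega_{r_0}$ with $k$ sufficiently large and $t$ beyond a threshold $t_0$ chosen via the $L^{p_1}$ norm of $U$, pick the natural ball given by a stopping-time argument on $\mM(U^{p_1})^{1/p_1}$ and apply the hypothesis $U\le V+W$ there. Chebyshev's inequality combined with the $L^{p_3}$ bound on $V$ shows that $V\le Ckt/2$ on most of the ball (measured with Lebesgue measure), while the $L^{p_1}$ bound on $W$ controls $|\{W > kt/2\}|$ in terms of $\mathcal F$. Combining these with a Vitali covering of $\mA(kt)$ gives
$$
|\mA(kt)\cap\Omega_{r_0}|\le \epsilon(k)\,|\mB(t)\cap\Omega_{2r_0}|
$$
with $\epsilon(k)\to 0$ as $k\to\infty$. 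The reverse H\"older property $\omega\in RH_s$ then upgrades this to a weighted inequality $\omega(\mA(kt))\le\epsilon'(k)\,\omega(\mB(t))$, and the hypothesis $p_3>ps/(s-1)$ is precisely what guarantees $\epsilon'(k)\to 0$.

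The second step is the layer-cake integration $\int U^p\,\omega\simeq\int_0^\infty\omega(\mA(t))\,t^{p-1}\,dt$. Splitting at $t_0$ handles the low range through a weak-type bound, while the high range, after choosing $k$ so that $\epsilon'(k)$ is small, splits $\mB(t)$ into a $U$-part (absorbable) and an $\mathcal F$-part. The $\mathcal F$-part is controlled using the weighted boundedness of the relevant fractional maximal function; here the assumption $p_2/(1-\nu)\le\hat p$ together with $\omega\in A_{p/\hat p}$ ensures that $\mathcal F$ is bounded from $L_{p_0,\omega^{p_0/p}}$ to $L_{p,\omega}$ up to the factor $\omega(\Omega)^{\nu/p_2}$ that appears in the conclusion (recall $1/p_0=1/p+\nu/p_2$). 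The only place $|\Omega|$ enters explicitly is when converting the fractional maximal bound back to the integral formulation of the target estimate.

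The main obstacle is bookkeeping: verifying that each of the three parameter constraints ($p_1<p$, $p_3>ps/(s-1)$, $p_2/(1-\nu)\le\hat p$) is precisely what a distinct step of the argument requires, and tracking the $|\Omega_{2r}(x)|^\nu$ factor through the Vitali covering and the reverse H\"older upgrade so that the dimensional powers match the stated $\omega(\Omega)^{\nu/p_2}$ prefactor. Once these exponents are aligned, the argument is a translation of \cite[Theorem~3.1]{YYY20} from the $\nu=0$ case to the general fractional case, and no genuinely new idea is needed.
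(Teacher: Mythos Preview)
The paper does not actually prove this theorem: it is stated as a ``ready-to-use interpolation theorem, which can be simply derived from \cite[Theorem~3.1]{YYY20}'' and no argument is given. Your sketch is precisely the weighted good-$\lambda$ scheme that underlies \cite[Theorem~3.1]{YYY20}, and your identification of the roles of the three parameter constraints (the reverse H\"older exponent $s$ for the upgrade from Lebesgue to $\omega$-measure, the open-ended $A_{p/\hat p}$ property for the maximal bound on the $F$-term, and $p_1<p$ for the absorption) is correct, so your proposal is consistent with what the paper intends.
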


\subsection{Unweighted \texorpdfstring{$L_p$}{Lp} estimate and a reverse H\"older inequality}
As a preparation, we first prove the unweighted estimate. In this case, we consider a more general Robin boundary condition
\begin{equation}
                                        \label{eq2.05}
    a_{ij}^{\alpha\beta}D_j u^\beta n_i+ \gamma^{\alpha\beta} u^\beta=f_i^\alpha n_i+\varphi^\alpha\quad \text{on}\quad \partial \Omega,
\end{equation}
where $\varphi^\alpha\in W^{-1/p}_p(\p\Omega)$.
\begin{theorem}\label{thm-200614-1046}
	Under the assumptions of Theorem \ref{thm-200527-1035}, we have the $W^1_p$ well-posedness of \eqref{eqn-200622-1036}-\eqref{eq2.05} for any $p\in(1,\infty)$. In other words, for any $\vf_i\in L_p$, $\vg\in L_{p^*}$, and $\vvarphi\in W^{-1/p}_p(\p\Omega)$. there exists a unique solution $\vu\in W^1_p(\Omega;\bR^m)$, satisfying
	\begin{equation*}
	\|\vu\|_{W^1_p(\Omega)}\le N
\Big(\sum_i\|\vf_i\|_{L_p(\Omega)}+\|\vg\|_{L_{p^*}(\Omega)}+\|\vvarphi\|_{W^{-1/p}_p(\p\Omega)}\Big),
	\end{equation*}
where $N=N(d,m,\kappa,M,p,R_0,\gamma_0,\Omega,E)>0$.
%	Here, N=N(d,\kappa,p,\gamma_0,R_0,{\color{cyan}Lip(\Omega)},\diam(\Omega)).
\end{theorem}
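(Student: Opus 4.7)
The argument parallels that of Theorem \ref{thm-200609-0525} for the conormal case, with the extra boundary term $\gamma^{\alpha\beta}u^\beta$ as the essentially new element. First I would reduce to the case $\vg=\vvarphi=0$. Since the Robin problem imposes no compatibility condition, the linear functional $\phi\mapsto\int_\Omega g^\alpha\phi^\alpha-\int_{\p\Omega}\varphi^\alpha\phi^\alpha$ is bounded on the full $W^1_{p'}(\Omega)$ via the embeddings $W^1_{p'}(\Omega)\hookrightarrow W^{1-1/p'}_{p'}(\p\Omega)$ and $W^1_{p'}(\Omega)\hookrightarrow L_{(p^*)'}(\Omega)$. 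The Riesz representation theorem then yields $h_0^\alpha,h_i^\alpha\in L_p(\Omega)$ with the functional equal to $\int_\Omega(h_i^\alpha D_i\phi^\alpha+h_0^\alpha\phi^\alpha)$. The $h_i$ piece is absorbed into $\vf_i$ and the $h_0$ piece into an effective $\vg$; any remaining bulk source in $L_{p^*}$ can be written as the divergence of a vector field in $L_p$ (since $1/p=1/p^*-1/d$, solve an auxiliary Laplace equation componentwise on a larger ball and take the gradient), and this too is absorbed into $\vf_i$.

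With $\vg=\vvarphi=0$, the main task is the a priori estimate. Following the strategy of Proposition \ref{prop-200611-0515}, I would split $\vu=\vv+\vw$ locally on $\Omega_r(x)$ for each $r<R_0$, where $\vw$ solves a conormal problem whose forcing consists of $\vf_i$ together with an effective conormal datum that absorbs the boundary term $\gamma^{\alpha\beta}u^\beta$, and $\vv=\vu-\vw$ solves a locally homogeneous Robin problem. Theorem \ref{thm-200609-0525} provides the required $L_{\tilde p}$ bound on $D\vw$, with the effective conormal datum controlled via the trace theorem on the Lipschitz portion of $\p\Omega$. For $\vv$ I would invoke the reverse H\"older inequality for homogeneous Robin solutions (this is Lemma \ref{lemma1}, obtained via Caccioppoli combined with a further splitting that again calls on the conormal results), which upgrades the $L_2$ energy bound for $D\vv$ to an $L_{2\mu}$ bound for some $\mu>1$. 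These two ingredients then feed directly into the level-set ``crawling of ink-spots'' argument of Section \ref{sec-200626-1027} to produce the $L_p$ bound on $|D\vu|$; the $L_p$ bound on $\vu$ itself follows from a Poincar\'e-type inequality using the nondegeneracy of $\gamma$ on $E\subset\p\Omega$.

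Unique solvability in $W^1_p$ for every $p\in(1,\infty)$ then follows by the method of continuity, anchored at $p=2$ where Lax--Milgram applied to the bilinear form $\int_\Omega a_{ij}^{\alpha\beta}D_ju^\beta D_i\phi^\alpha+\int_{\p\Omega}\gamma^{\alpha\beta}u^\beta\phi^\alpha$ gives existence and uniqueness (coercivity comes from \eqref{eqn-200701-0518} together with \eqref{eqn-200717-0438} via a Friedrichs-type inequality). The principal technical obstacle lies in the decomposition step: one must track how the boundary contribution $\gamma\vu$ rescales under localization, dominate its $L_p(\p\Omega_r)$-norm by bulk $L_q$-norms of $D\vu$ through a sharp trace interpolation, and ensure that the resulting loss is compatible with the measure-theoretic flatness of $\p\Omega$ (Assumption \ref{ass-200604-1252}), which is weaker than both Reifenberg flatness and semi-convexity. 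This is precisely where the delicate decomposition and localization announced in the introduction, and the avoidance of non-tangential maximal functions used in \cite{YYY20}, become essential.
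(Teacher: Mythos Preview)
Your proposal takes a substantially more elaborate route than the paper. The paper's proof is a short bootstrap: at $p=2$ existence and uniqueness come from Lax--Milgram together with the Friedrichs inequality, exactly as you say. For $p>2$ the paper simply treats the Robin condition as a conormal condition with the already-known datum $-\gamma^{\alpha\beta}u^\beta$ moved to the right-hand side. Starting from the $W^1_2$ solution, the trace theorem and boundary Sobolev embedding give $\vu|_{\p\Omega}\in W^{1/2}_2(\p\Omega)\hookrightarrow W^{-1/q}_q(\p\Omega)$ for some $q>2$, so Theorem \ref{thm-200609-0525} applied to this conormal problem yields $\vu\in W^1_{\min\{p,q\}}$; iterating finitely many times reaches $W^1_p$. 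The case $p<2$ is by duality. No local decomposition, no level-set argument, and no appeal to Lemma \ref{lemma1} are needed.

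Your approach has two concrete problems. First, you invoke Lemma \ref{lemma1} to handle the ``good'' piece $\vv$, but in the paper's logical order Lemma \ref{lemma1} is proved \emph{after} Theorem \ref{thm-200614-1046}, and its proof explicitly opens by appealing to the bootstrap of Theorem \ref{thm-200614-1046} to secure the initial $W^1_p$ regularity of $\vu$ on $\Omega_{r_1}$; as written this is circular. Second, your decomposition is internally inconsistent: if $\vw$ solves a \emph{conormal} problem whose boundary datum absorbs $\gamma\vu$, then $\vv=\vu-\vw$ inherits a homogeneous \emph{conormal} condition, not a homogeneous Robin condition as you claim; and if instead $\vw$ solved a Robin problem, its $L_{\tilde p}$ estimate would presuppose exactly the result you are proving. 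The upshot is that once Theorem \ref{thm-200609-0525} is available, the Robin term $\gamma\vu$ is a genuinely lower-order boundary perturbation, and the direct bootstrap is both simpler and avoids these ordering issues.
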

We prove this by using the estimate for the conormal problem.
\begin{proof}
%	As before, by solving the divergence equation
%	for $\tilde{\vec{h}}^\alpha = (\tilde{h}_i^\alpha)_i$:
%	\begin{equation*}
%		\dv(\tilde{\vec{h}}^\alpha) = g^\alpha\quad\text{in}\,\,\Omega,\quad \tilde{\vec{h}}^\alpha=0\quad\text{on}\,\,\p\Omega,\quad \norm{\tilde{\vec{h}}^\alpha}_{L_p(\Omega)} \lesssim \norm{g^\alpha}_{L_{p^*}(\Omega)},
%	\end{equation*}
%	we only need to consider the case when $g^\alpha=0$. Again, we discuss according to the range of $p$. {\color{cyan}Here $\int g\neq 0$, we need to consider $\gamma u$ at the same time.}
	When $p=2$, the result is standard and no smallness condition on $\theta$ is needed. It follows from the Lax-Milgram lemma, the Sobolev embedding $W^1_2(\Omega)\hookrightarrow L_{2d/(d-2)}(\Omega)$, the trace lemma, and the Friedrichs inequality: for $v\in W^1_2$ and $E\subset \p\Omega$ with $|E|>0$, there exists some constant $N=N(d,E,\Omega)$, such
	\begin{equation*}
		\int_\Omega |v|^2 \leq N\left(\int_\Omega |Dv|^2 + \int_E|v|^2\right).
	\end{equation*}
The above Friedrichs inequality is classical. See, for example, \cite[Appendix]{MR2038145} and \cite{MR3429656}.
	
	For the case when $p>2$, we apply a bootstrap argument. Let $\theta$ be small enough such that Theorem \ref{thm-200609-0525} with the integrability exponent between $2$ and $p$ holds. First, since $\vf_i\in L_p(\p\Omega)\subset L_2(\p\Omega)$, $\vg\in L_{p^*}(\Omega)\subset L_{2^*}(\Omega)$, and $\vvarphi\in W^{-1/p}_p(\p\Omega)\subset W^{-1/2}_2(\p\Omega)$,
we can solve for a weak solution $\vu\in W^1_2(\Omega)$. By the trace theorem and embedding,
	$$
	\vu\in W^1_2(\Omega) \hookrightarrow W^{1/2}_2(\p\Omega) \hookrightarrow W^{-1/q}_q(\partial\Omega),
	$$
	where $q>2$ satisfying $-1/2 + (d-1)/2 = 1/q + (d-1)/q$. Using Theorem \ref{thm-200609-0525}, we can improve the regularity of $\vu$ to $W^1_{\min\{p,q\}}(\Omega)$ with the corresponding estimate. Repeating this if needed, in finite steps we reach the required $W^1_p$ regularity.
	
	As usual, the case when $p\in (1,2)$ can be obtained by duality. The theorem is proved.  %{\color{blue}By following the proof of Theorem \ref{thm-200609-0525}, in the unweighted case when $\omega\equiv 1$},  %{\color{cyan}(double check the duality argument)}.
\end{proof}
\begin{remark}\label{rmk-200709-0752}
		For the unweighted $W^1_p$ solvability and estimate, the boundedness on $\gamma$ can be generalized to
		\begin{equation*}
			\gamma\in \begin{cases}
				L_{p(d-1)/d}\quad &\text{when}\,\,p>d\\
				L_{d-1 +\epsi} &\text{when}\,\,p\leq d,
			\end{cases}
		\end{equation*}
where $\epsi$ can be any positive number.%\todo{Li: such condition comes from the requirement that $\gamma u\in L_{p(d-1)/d}+\epsi$. We lose $\epsi$ here because we want the bootstrap to stop in finite steps.}
\end{remark}
Next, we prove a local result: a reverse H\"older inequality for the Robin problem with homogeneous right-hand side. This plays the key role in the proof of Theorem \ref{thm-200527-1035}. Compared to the localization argument for conormal problems, here we do not have the freedom of subtracting any constant $c$ from a solution. In the proof, we instead subtract a function which is obtained by solving a conormal problem with a inhomogeneous boundary condition.
\begin{lemma}
	\label{lemma1}
	Let $p\in (1,\infty)$, $x_0\in \partial\Omega$, and $r\in (0,R_0/2)$. Suppose that $\vu\in W^1_{p_0}(\Omega)$ is a weak solution to \eqref{eqn-200622-1036}-\eqref{eqn-200527-1002} with $p_0>1$ and $\vf_i\equiv \vg\equiv 0$ in $\Omega_{2r}(x_0)$. Then we have
	$$
	(|\vu|^p)^{1/p}_{\Omega_r(x_0)}+(|D\vu|^p)^{1/p}_{\Omega_r(x_0)}
	\le N(|\vu|)_{\Omega_{2r}(x_0)}+N(|D\vu|)_{\Omega_{2r}(x_0)},
	$$
	where $N>0$ depends on $(d,m,\kappa,M,p,R_0,|\Omega|,\gamma_0)$.
\end{lemma}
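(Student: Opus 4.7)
\emph{Overview.} Following the strategy sketched in the introduction, I would write $\vu = \vv + \vw$, where $\vw$ solves a \emph{conormal} problem on all of $\Omega$ whose boundary datum encodes the Robin term $-\gamma\vu$ near $x_0$, so that $\vv = \vu - \vw$ satisfies a \emph{homogeneous} conormal problem on $\Omega_r(x_0)$. Both pieces are then accessible through Theorem~\ref{thm-200609-0525}.

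\emph{Construction of $\vw$.} Fix a cutoff $\zeta$ with $\zeta \equiv 1$ on $B_r(x_0)$, $\supp\zeta \subset B_{3r/2}(x_0)$, and $|D\zeta|\le N/r$. Define $\vw$ to be the mean-zero weak solution of
\begin{equation*}
D_i(a_{ij}^{\alpha\beta}D_j w^\beta) = c^\alpha \ \text{in } \Omega,\qquad a_{ij}^{\alpha\beta}D_j w^\beta n_i = -\zeta\gamma^{\alpha\beta}u^\beta\ \text{on } \p\Omega,
\end{equation*}
with $c^\alpha := -|\Omega|^{-1}\int_{\p\Omega}\zeta\gamma^{\alpha\beta}u^\beta$ chosen to enforce the compatibility condition. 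Theorem~\ref{thm-200609-0525} then yields
\begin{equation*}
\|\vw\|_{W^1_q(\Omega)}\le N\|\zeta\gamma\vu\|_{W^{-1/q}_q(\p\Omega)}+N|c|,\qquad q\in(1,\infty),
\end{equation*}
and, by duality together with the local trace theorem, the boundary norm on the right is dominated by a norm of $\vu$ on $\Omega\cap B_{3r/2}(x_0)$ at one Sobolev index below $q$. On $\Omega_r(x_0)$, since $\zeta\equiv 1$ there, $\vv = \vu-\vw$ satisfies the homogeneous conormal problem (with only the constant source $-c^\alpha$).

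\emph{Local estimate for $\vv$ and bootstrap.} To control $\vv$ on a smaller ball, multiply by a second cutoff $\eta$ with $\eta\equiv 1$ on $B_{r/4}(x_0)$ and $\supp\eta\subset B_{r/2}(x_0)$. Then $\eta\vv$, extended by zero, solves a conormal problem on $\Omega$ with source terms $a_{ij}^{\alpha\beta}v^\beta D_j\eta$ and $(a_{ij}^{\alpha\beta}D_j v^\beta)D_i\eta$, both supported in the annulus $B_{r/2}\setminus B_{r/4}$ and bounded respectively by $N|\vv|/r$ and $N|D\vv|/r$. Another application of Theorem~\ref{thm-200609-0525} produces a self-improving estimate of the form
\begin{equation*}
\|\vv\|_{W^1_q(\Omega_{r/4}(x_0))}\le \tfrac{N}{r}\|\vv\|_{L_q(\Omega_{r/2}(x_0))}+\tfrac{N}{r}\|D\vv\|_{L_{q^*}(\Omega_{r/2}(x_0))}+Nr^{d/q}|c|,
\end{equation*}
which gains one Sobolev step (here $q^*$ is the paper's lower Sobolev exponent). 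Combined with the bound for $\vw$, this yields $W^1_q$ control of $\vu$ on a smaller ball from $W^1_{q^*}$ control on a slightly larger ball. Iterating on a nested sequence of radii $r_k\in[r,2r]$ reaches any fixed $p$ in finitely many steps; a standard fill-the-gap absorption combined with H\"older's inequality then lowers the initial $L_{p_0}$ right-hand side down to the $L_1$ averages stated in the lemma.

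\emph{Main obstacle.} The most delicate point is tracking the boundary quantity $\zeta\gamma\vu$ in the correct negative Sobolev space across each iteration, and ensuring that the compatibility constant $c$ contributes only an admissible error scaling properly in $r$, $|\Omega|$, and $R_0$. Closing the bootstrap with an $L_1$ lower-order term (rather than the $L_{p_0}$ norm one starts with) requires feeding the self-improving estimate into a fill-the-gap iteration on nested radii, which is precisely where the dependence of $N$ on $|\Omega|$ and $R_0$ in the statement of Lemma~\ref{lemma1} arises.
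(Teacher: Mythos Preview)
Your approach is essentially the same as the paper's: decompose $\vu=\vv+\vw$, where $\vw$ solves a global conormal problem absorbing the Robin term $\gamma\vu$ near $x_0$, so that $\vv$ satisfies a homogeneous conormal problem locally and can be treated by localizing Theorem~\ref{thm-200609-0525}. The paper makes two technical choices that differ from yours and streamline the argument. First, rather than enforcing compatibility via a constant interior source $c$, the paper modifies the boundary datum itself: it takes $\varphi^\alpha$ supported on $\Lambda_{3r}$, equal to $\gamma^{\alpha\beta}u^\beta\zeta$ on $\Lambda_{2r}$, and with $\int_{\partial\Omega}\varphi^\alpha=0$. This keeps $\vv$ satisfying a \emph{genuinely} homogeneous equation, with no residual constant to track. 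Second, instead of normalizing $(\vw)_\Omega=0$, the paper sets $(\vw)_{\Omega_{3r/2}}=(\vu)_{\Omega_{3r/2}}$, so that $(\vv)_{\Omega_{3r/2}}=0$ and the Sobolev--Poincar\'e inequality applies directly to $\vv$ on the local ball.

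The point you flagged as the ``main obstacle'' --- controlling $\|\zeta\gamma\vu\|_{W^{-1/p}_p(\partial\Omega)}$ with the correct power of $r$ --- is indeed the heart of the matter. The paper handles it cleanly: since $\varphi$ has mean zero and is supported on $\Lambda_{3r}$, duality plus the fractional Poincar\'e inequality $\|\phi-(\phi)_{\Lambda_{3r}}\|_{L_{p'}(\Lambda_{3r})}\le Nr^{1/p}\|\phi\|_{\dot W^{1/p}_{p'}(\Lambda_{3r})}$ yields $\|\varphi\|_{W^{-1/p}_p(\partial\Omega)}\le Nr^{1/p}\|\varphi\|_{L_p(\Lambda_{3r})}$. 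This $r^{1/p}$ gain, followed by the trace inequality into $L_q(\Omega_{2r})$ with $1/q=(1/d)((d-1)/p+1)<1/p$ and an interpolation (H\"older plus Young), produces the small coefficient $\varepsilon$ in front of $\|D\vu\|_{L_p(\Omega_{2r})}$ that the final iteration absorbs. Your route via a constant $c$ can be made to work (the $r^{1/q}$ gain is recoverable through boundary Sobolev embedding and H\"older on the small patch), but the paper's mean-zero construction avoids carrying $c$ through the localization of $\vv$ and makes the scaling transparent.
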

\begin{proof}
Without loss of generality, we assume that $x_0=0$ and $p>2$. By using a bootstrap argument similar to the proof of Theorem \ref{thm-200614-1046}, we know that $u\in W^1_p(\Omega_{r_1})$ for any $r_1\in (0,2r)$. In the following, for $r>0$, denote $$\Lambda_r:=\p\Omega\cap B_r.$$
	%We may certainly assume that
	We take a smooth cutoff function $\zeta\in C_0(B_{2r})$ such that $\zeta=1$ on $B_{3r/2}$. %Then $\vu\zeta$ satisfies
	%$$
	%D_i(a_{ij}D_j(\vu\zeta))=D_i(a_{ij}D_i\zeta \vu)+a_{ij}D_i\zeta D_j \vu
	%$$
	%in $\Omega$ with the boundary condition
	%$$
	%a_{ij}D_j (\vu\zeta)n_i+\gamma \vu\zeta=a_{ij}D_j\zeta \vu n_i\quad\text{on}\ \partial\Omega.
	%$$
	Take another function $\varphi^\alpha\in L_p(\partial\Omega)$ such that $\supp(\varphi^\alpha)\subset\Lambda_{3r}$, $\varphi^\alpha=\gamma^{\alpha\beta} u^{\beta}\zeta$ on $\Lambda_{2r}$,
	$$
	\int_{\partial \Omega} \varphi^\alpha=0, \quad\text{and}\quad \|\varphi^\alpha\|_{L_p(\Lambda_{3r})}\le N\|\gamma^{\alpha\beta} u^\beta\zeta\|_{L_p(\Lambda_{2r})}.
	$$
	Denote $\vec{\varphi}:=(\varphi^\alpha)_\alpha$. Let $\vec{w}=(w^\alpha)_\alpha\in W^1_p(\Omega;\bR^m)$ be a solution to
	$$
	D_i(a_{ij}^{\alpha\beta}D_j w^\beta)=0\quad\text{in}\quad \Omega
	$$
	with the inhomogeneous conormal boundary condition
	$$
	a_{ij}^{\alpha\beta}D_j w^\beta n_i=-\varphi^\alpha\quad\text{on}\quad \partial\Omega
	$$
	and satisfies $(\vec{w})_{\Omega_{3r/2}}=(\vu)_{\Omega_{3r/2}}$. According to Theorem \ref{thm-200609-0525}, such solution exists and satisfies
	$$
	\|D\vec{w}\|_{L_p(\Omega)}\le N\|\vec{\varphi}\|_{W^{-1/p}_p(\p\Omega)}.
	$$
	Noting that $\vvarphi$ has mean zero and has support on $\Lambda_{3r}$,	we have
	\begin{align}
			\|\vec{\varphi}\|_{W^{-1/p}_p(\p\Omega)} &= \sup_{\norm{\vec{\phi}}_{\dot{W}^{1/p}_{p'}(\Lambda_{3r})}=1}\left|\int_{\Lambda_{3r}} \vvarphi\cdot\vec{\phi}\right| = \sup_{\vec{\phi}}\left|\int_{\Lambda_{3r}} \vvarphi\cdot(\vec{\phi} - (\vec{\phi})_{\Lambda_{3r}})\right|\nonumber\\
			&\leq N\sup_{\vec{\phi}} \norm{\vvarphi}_{L_p(\Lambda_{3r})} \norm{\vec{\phi}- (\vec{\phi})_{\Lambda_{3r}}}_{L_{p'}(\Lambda_{3r})}\nonumber\\
			&= Nr^{1/p}\norm{\vvarphi}_{L_p(\Lambda_{3r})},\label{eqn-200706-0358}
	\end{align}
where $\norm{\cdot}_{\dot{W}^{1/p}_{p'}}$ is the usual homogeneous semi-norm for the fractional Sobolev space. In order to obtain \eqref{eqn-200706-0358}, we used the following fractional Sobolev inequality
	\begin{equation}\label{eqn-200627-0438} \|f-(f)_{\Lambda_{3r}}\|_{L_{p'}
(\Lambda_{3r})}\leq N(d,M,p',s)r^{s}\|f\|_{\dot{W}^s_{p'}(\Lambda_{3r})}
	\end{equation}
	and H\"older's inequality. The inequality \eqref{eqn-200627-0438} can be found, for example, in \cite{MR1945278}. Hence,
	$$\norm{D\vec{w}}_{L_p(\Omega)}	\le Nr^{1/p}\|\vec{\varphi}\|_{L_p(\Lambda_{3r})}
	\le N\sum_\alpha r^{1/p}\|\gamma^{\alpha\beta} u^\beta\zeta\|_{L_p(\Lambda_{2r})}.
	$$
Since $(\vec{w})_{\Omega_{3r/2}}=(\vu)_{\Omega_{3r/2}}$, by the Poincar\'e inequality and the trace lemma with a scaling, for any $\varepsilon\in (0,1)$,
	we have
	\begin{align}
		&\|\vec{w}\|_{W^{1}_p(\Omega_{3r/2})}\nonumber\\
&\le \|D\vec{w}\|_{L_p(\Omega_{3r/2})}
+\|\vec{w}-(\vec{w})_{\Omega_{3r/2}}\|_{L_p(\Omega_{3r/2})}
+\|(\vec{w})_{\Omega_{3r/2}}\|_{L_p(\Omega_{3r/2})}\nonumber\\
&\le Nr^{1/p}\|\gamma^{\alpha\beta} u^\beta\zeta\|_{L_p(\partial\Omega)} + Nr^{d/p-d}\|\vu\|_{L_1(\Omega_{3r/2})}\nonumber\\
%&{\color{red}\leq N(r^{1/p}\norm{\vu}_{\dot{W}^{1/p}_p(\Omega_{2r})} + \norm{u}_{L_p(\Omega_{2r})}) + Nr^{d/p-d}\|\vu\|_{L_1(\Omega_{3r/2})}}\nonumber\\
&\leq Nr^{1/p}(\norm{D\vu}_{L_q(\Omega_{2r})} + r^{-1}\norm{\vu}_{L_q(\Omega_{2r})}) + Nr^{d/p-d}\|\vu\|_{L_1(\Omega_{3r/2})}\nonumber\\
		&\le \varepsilon(r\|D\vu\|_{L_p(\Omega_{2r})}
		+\|\vu\|_{L_p(\Omega_{2r})})
		+N(\varepsilon)r^{d/p-d}\Big(r\|D\vu\|_{L_1(\Omega_{2r})}+
\|\vu\|_{L_1(\Omega_{2r})}\label{eqn-200703-0513}\Big)\\
		&\le \varepsilon r\|D\vu\|_{L_p(\Omega_{2r})}
		+N(\varepsilon)r^{d/p-d}\big(r\|D\vu\|_{L_1(\Omega_{2r})}
+\|\vu\|_{L_1(\Omega_{2r})}\big),\label{eq4.08}
	\end{align}
where $q>1$ satisfies $$
\frac 1q =\frac 1 d\Big(\frac{d-1} p+1\Big).
$$
Here in \eqref{eqn-200703-0513} we also used H\"older's inequality and Young's inequality, and in \eqref{eq4.08} we used the interpolation inequality.	

Let $\vec{v}:=\vu-\vec{w}$, which satisfies the homogeneous equation
	$$
	D_i(a_{ij}^{\alpha\beta}D_j v^\beta)=0\quad\text{in}\quad \Omega_{3r/2}
	$$
	with the homogeneous conormal boundary condition
	$$
	a_{ij}^{\alpha\beta}D_j v^\beta n_i=0\quad\text{on}\quad \Lambda_{3r/2}.
	$$
We can localize the estimate in Theorem \ref{thm-200609-0525} to obtain the estimate for $\vv$. For $B_r\subset B_s\subset B_t\subset B_{3r/2}$, let $\eta\in C^\infty_c(B_s)$ be a usual cut-off function with $\eta=1$ on $B_s$ and $|D\eta|\le  N/(t-s)$. Then $\vec{v}\eta$ solves
	\begin{equation}\label{eqn-200710-0150}
		D_i(a_{ij}^{\alpha\beta}D_j(v^\beta \eta)) = D_i(a_{ij}^{\alpha\beta}v^\beta D_j\eta) + a_{ij}^{\alpha\beta}D_j v^\beta D_i\eta\quad\text{in}\quad \Omega
	\end{equation}
	with the homogeneous conormal boundary condition on $\partial\Omega$. By Theorem \ref{thm-200609-0525} applied to \eqref{eqn-200710-0150} and our construction of $\eta$,
	\begin{equation*}
		\norm{D\vec{v}}_{L_p(\Omega_s)} \leq N\left(\frac{1}{t-s}\norm{\vec{v}}_{L_p(\Omega_t\setminus \Omega_s)} + \frac{1}{t-s}\norm{D\vec{v}}_{L_{p^*}(\Omega_t\setminus \Omega_s)}\right).
	\end{equation*}
Applying H\"older's inequality and Young's inequality to the second term on the right-hand side, we obtain that for some small $\epsi$ to be determined later,
\begin{align}\label{eqn-200714-1113}
	\norm{D\vec{v}}_{L_p(\Omega_s)} &\leq \epsi\norm{D\vec{v}}_{L_p(\Omega_t\setminus\Omega_s)} + N\frac{1}{t-s}\norm{\vec{v}}_{L_p(\Omega_t\setminus \Omega_s)}\nonumber\\
&\qquad + N(\epsi)(t-s)^{-d+d/p}\norm{D\vec{v}}_{L_1(\Omega_t\setminus\Omega_s)}.
\end{align}
We absorb the first term on the right-hand side by an iteration argument. Take an increasing sequence $r_k:=3r/2-2^{-k}r$. Note that $r_1=r$ and $r_\infty=3r/2$. Now taking $t=r_{k+1}$ and $s=r_k$ in \eqref{eqn-200714-1113}, we obtain
\begin{equation*}
	\begin{split}
	\norm{D\vec{v}}_{L_p(\Omega_{r_k})} &\leq \epsi\norm{D\vec{v}}_{L_p(\Omega_{r_{k+1}})}+ N2^{k+1}r^{-1}\norm{\vec{v}}_{L_p(\Omega_{r_{k+1}}\setminus \Omega_{r_k})}
\\
	&\quad + N(\epsi)(2^{k+1}r^{-1})^{d-d/p}\norm{D\vec{v}}_{L_1(\Omega_{r_{k+1}}\setminus\Omega_{r_k})}.
	\end{split}
\end{equation*}
Choosing $\epsi$ small enough such that $2^{d-d/p}\epsi<1$, multiplying both sides by $\varepsilon^k$, then adding up in $k$, we have
\begin{equation}\label{eqn-200715-1202}
	\norm{D\vec{v}}_{L_p(\Omega_r)} \leq Nr^{-1}\norm{\vec{v}}_{L_p(\Omega_{3r/2})} + Nr^{-d+d/p}\norm{D\vec{v}}_{L_1(\Omega_{3r/2})}.
\end{equation}
To estimate the first term on the right-hand side of \eqref{eqn-200715-1202}, by using $(\vec{v})_{\Omega_{3r/2}}=0$, the Sobolev-Poincar\'e inequality, H\"older's inequality and Young's inequality, for any small constant $\epsi>0$,
\begin{align}\label{eqn-200706-0732}
r^{-1}(|\vec{v}|^p)^{1/p}_{\Omega_{3r/2}} \leq N(|D\vec{v}|^{p^*})^{1/p^*}_{\Omega_{3r/2}} \leq \epsi(|D\vec{v}|^{p})^{1/p}_{\Omega_{3r/2}} + N(\epsi)(|D\vec{v}|)_{\Omega_{3r/2}}.
\end{align}
From \eqref{eqn-200706-0732} and \eqref{eqn-200715-1202}, we obtain
	\begin{equation}		\label{eq4.09}
	\begin{split}
	&r^{-1}(|\vec{v}|^p)^{1/p}_{\Omega_r}+(|D\vec{v}|^p)^{1/p}_{\Omega_r}
	\le \epsi(|D\vec{v}|^{p})^{1/p}_{\Omega_{3r/2}} + N(\epsi)(|D\vec{v}|)_{\Omega_{3r/2}}\\
	&\le \epsi(|D\vec{w}|^{p})^{1/p}_{\Omega_{3r/2}} + \epsi(|D\vec{u}|^{p})^{1/p}_{\Omega_{3r/2}} + N(\epsi)(|D\vec{v}|)_{\Omega_{3r/2}}.
	\end{split}
	\end{equation}
%{\color{red}which together with an iteration argument {\color{blue}as before} yields\todo{Li: here due to the condition $(\vu)=0$, we cannot iterate.}
%	\begin{align}
%		%\label{eq4.09}
%		r^{-1}(|\vec{v}|^p)^{1/p}_{\Omega_r}+(|D\vec{v}|^p)^{1/p}_{\Omega_r}
%		\le N(|D\vec{v}|)_{\Omega_{3r/2}}.
%	\end{align}}
	Combining \eqref{eq4.08} and \eqref{eq4.09}, by the triangle inequality we obtain
\begin{align*}
&	(|\vu|^p)^{1/p}_{\Omega_r}+(|D\vu|^p)^{1/p}_{\Omega_r}\\
&	\le N(\varepsilon)(|\vu|)_{\Omega_{2r}}+N(\varepsilon)(|D\vu|)_{\Omega_{2r}}
	+N\varepsilon (r+1)(|D\vu|^p)^{1/p}_{\Omega_{2r}}.
\end{align*}
Finally, an iteration argument gives the desired estimate.
\end{proof}

\subsection{Proof of Theorem \ref{thm-200527-1035}}

Now we turn to the proof of Theorem \ref{thm-200527-1035}.

\textbf{Proof of Case (a)}. Since the estimates involving $\vf$ and $\vg$ require versions of Theorem \ref{thm-200604-0840} with different parameters, we treat them separately. For some $p_1>1$ to be chosen later, by Theorem \ref{thm-200614-1046}, we solve for $u_1^\alpha$ and $u_2^\alpha$, which are $W^1_{p_1}$ weak solutions to
\begin{equation*}%\label{eqn-200604-0843-1}
\begin{cases}
D_i(a_{ij}^{\alpha\beta}D_j u_1^{\beta})=D_i f_i^\alpha\quad&\text{in}\quad \Omega,\\
a_{ij}^{\alpha\beta}D_j u_1^\beta n_i+ \gamma^{\alpha\beta} u_1^\beta=f_i^\alpha n_i\quad &\text{on}\quad \partial \Omega
\end{cases}
\end{equation*}
and
\begin{equation}\label{eqn-200604-0843-2}
\begin{cases}
D_i(a_{ij}^{\alpha\beta}D_j u_2^{\beta})=g^\alpha\quad&\text{in}\quad \Omega,\\
a_{ij}^{\alpha\beta}D_j u_2^\beta n_i+ \gamma^{\alpha\beta} u_2^\beta=0 \quad &\text{on}\quad \partial \Omega.
\end{cases}
\end{equation}

We will show that $u^\alpha_{1}, u^\alpha_{2}\in W^1_{p,\omega}$ and satisfy the desired estimates. We first estimate $u_1^\alpha$. The interpolation in Theorem \ref{thm-200604-0840} requires the further decomposition of $u^\alpha_{1}$: the ``$V$'' part which satisfies the self-improving property -- the reverse H\"older inequality, and the ``$W$'' part which is small. More precisely, for any $x_0\in \partial\Omega$, let $\zeta$ be a cut-off function supported on $B_{2r}(x_0)$ with $\zeta=1$ on $B_r(x_0)$. Now solve for $w_1^\alpha\in W^{1}_{p_1}(\Omega)$, which is the unique solution to
$$
D_i(a_{ij}^{\alpha\beta}D_j w_1^\beta)=D_i (f_i^\alpha\zeta)\quad \text{in}\quad \Omega
$$
with the boundary condition
$$
a_{ij}^{\alpha\beta}D_j w_1^\beta n_i + \gamma^{\alpha\beta} w_1^\beta=f_i^\alpha\zeta n_i\quad \text{on}\quad \partial \Omega,
$$
satisfying
$$
(|\vw_1|^{p_1})^{1/p_1}_{\Omega_{2r}(x_0)}
+(|D\vw_1|^{p_1})^{1/p_1}_{\Omega_{2r}(x_0)}
\le N\sum_i(|\vf_i|^{p_1})^{1/p_1}_{\Omega_{2r}(x_0)}.
$$
Again, the existence and estimate for $w_1^\alpha$ are guaranteed by Theorem \ref{thm-200614-1046}. Then $v_1^\alpha:=u_1^\alpha-w_1^\alpha$ satisfies the conditions in Lemma \ref{lemma1}. Hence,
\begin{equation}\label{eqn-200616-1248}
\begin{split}
&(|\vv_1|^{p_3})^{1/p_3}_{\Omega_r(x_0)}
+(|D\vv_1|^{p_3})^{1/{p_3}}_{\Omega_r(x_0)}
\le N(|\vv_1|)_{\Omega_{2r}(x_0)}
+N(|D\vv_1|)_{\Omega_{2r}(x_0)}\\
&\le N(|\vu_1|)_{\Omega_{2r}(x_0)}+N(|D\vu_1|)_{\Omega_{2r}(x_0)} + N(|\vw_1|^{p_1})^{1/p_1}_{\Omega_{2r}(x_0)}+N(|D\vw_1|^{p_1})^{1/p_1}_{\Omega_{2r}(x_0)}\\
&\le N(|\vu_1|)_{\Omega_{2r}(x_0)}+N(|D\vu_1|)_{\Omega_{2r}(x_0)}
+N\sum_i(|\vf_i|^{p_1})^{1/p_1}_{\Omega_{2r}(x_0)},
\end{split}
\end{equation}
for arbitrarily large $p_3<\infty$. There is a similar decomposition in the interior of the domain. Now, let
\begin{equation*}
U := \sum_\alpha|u^{\alpha}_1| + \sum_{i,\alpha}|D_iu^\alpha_1|,\quad |F| := \sum_{i,\alpha}|f_i^{\alpha}|,
\end{equation*}
\begin{equation*}
V := \sum_\alpha|v^{\alpha}_1| + \sum_{i,\alpha}|D_iv^\alpha_1|,\quad \text{and}\quad W := \sum_\alpha|w^{\alpha}_1| + \sum_{i,\alpha}|D_iw^\alpha_1|.
\end{equation*}
We apply Theorem \ref{thm-200604-0840} with $p_1=p_2=\hat{p}$, $p_0=p$, $\nu=0$, and $p_3=ps/(s-1) + \epsi$, where $\epsi$ can be any positive number, and $\hat p$ and $s$ are from Lemma \ref{lem-200615-1006}. We then obtain
	\begin{equation*}
	\begin{split}
	\norm{\vu_1}_{W^1_{p,\omega}(\Omega)}&\le N \sum_i\norm{\vf_i}_{L_{p,\omega}(\Omega)} + N\norm{\vu_1}_{W^1_{p_1}(\Omega)}\\
	&\le N \sum_i\norm{\vf_i}_{L_{p,\omega}(\Omega)} + N\sum_i\norm{\vf_i}_{L_{p_1}(\Omega)}\\
	&\le N \sum_i\norm{\vf_i}_{L_{p,\omega}(\Omega)},
	\end{split}
\end{equation*}
where we used Theorem \ref{thm-200614-1046} in the second inequality.

The estimate for $u_2^\alpha$ can be obtained similarly, by choosing a different group of parameters. As before, we first solve
\begin{equation*}
\begin{cases}
D_i(a_{ij}^{\alpha\beta}D_j w_2^\beta)= g^\alpha\zeta\quad&\text{in}\quad \Omega,\\
a_{ij}^{\alpha\beta}D_j w_2^\beta n_i + \gamma^{\alpha\beta} w_2^\beta=0\quad &\text{on}\quad \partial \Omega.
\end{cases}
\end{equation*}
According to Theorem \ref{thm-200614-1046}, such solution exists, satisfying
$$
(|\vw_2|^{p_1})^{1/p_1}_{\Omega_r(x_0)}+(|D\vw_2|^{p_1})^{1/p_1}_{\Omega_r(x_0)}
\le Nr^{d/p_1^* - d/p_1}(|\vg|^{p_1^{*}})^{1/p_1^{*}}_{\Omega_{2r}(x_0)}.
$$
Now $v_2^\alpha=u_2^\alpha-w_2^\alpha$ satisfies the same equations as $v_1^\alpha$ in $\Omega_{2r}$. Hence, similar to \eqref{eqn-200616-1248}, the following holds for arbitrarily large $p_3<\infty$
\begin{equation*}%\label{eqn-200617-1209}
	\begin{split}
	&(|\vv_2|^{p_3})^{1/p_3}_{\Omega_r(x_0)}
+(|D\vv_2|^{p_3})^{1/{p_3}}_{\Omega_r(x_0)}
	\le N(|\vv_2|)_{\Omega_{2r}(x_0)}
+N(|D\vv_2|)_{\Omega_{2r}(x_0)}\\
	&\le N(|\vu_2|)_{\Omega_{2r}(x_0)}+N(|D\vu_2|)_{\Omega_{2r}(x_0)} + N(|\vw_2|^{p_1})^{1/p_1}_{\Omega_{2r}(x_0)}
+N(|D\vw_2|^{p_1})^{1/p_1}_{\Omega_{2r}(x_0)}\\
	&\le N(|\vu_2|)_{\Omega_{2r}(x_0)}+N(|D\vu_2|)_{\Omega_{2r}(x_0)}
	+Nr^{d/p_1^* - d/p_1}(|\vg|^{p_1^{*}})^{1/p_1^{*}}_{\Omega_{2r}(x_0)}.
	\end{split}
	\end{equation*}
	Define $U$, $V$, and $W$ as before, and
$$
F := \sum_\alpha|g^\alpha|.
$$
Now, applying Theorem \ref{thm-200604-0840} with
$$
p_1 = \hat{p},\quad p_2 = (\hat{p})^*,\quad p_3 = \frac{ps}{s-1} + \epsi,\quad p_0 = p_\omega,\quad\text{and}\quad  \nu = 1-(\hat{p})^*/\hat{p},
$$
we obtain
\begin{align*}
\norm{\vu_2}_{W^1_{p,\omega}}
&\le N \norm{\vu_2}_{W^1_{\hat{p}}(\Omega)}
+ N\norm{\vg}_{L_{p_\omega,\omega^{p_\omega/p}}(\Omega)}\\
&\le N \norm{\vg}_{L_{(\hat{p})^*}(\Omega)} + N \norm{\vg}_{L_{p_\omega,\omega^{p_\omega/p}}(\Omega)},
\end{align*}
where the second inequality is obtained by the unweighted estimate in Theorem \ref{thm-200614-1046}. Now, applying the properties of $A_p$ weights in Lemma \ref{lem-200615-1006}, we have $\omega\in A_{p/\hat{p}}$, and hence,
\begin{equation*}
\omega^{p_\omega/p} \in A_{p_\omega/\hat{p} + 1 - p_\omega/p}.
\end{equation*}
From the embedding in Lemma \ref{lem-200624-0616}, we have
\begin{equation*}
	\norm{\vg}_{L_{(\hat{p})^*}(\Omega)} \le N \norm{\vg}_{L_{p_\omega,\omega^{p_\omega/p}}(\Omega)}
\end{equation*}
because by the definition of $p_\omega$, we have
$$
\frac{p_\omega}{p_\omega/\hat{p} + 1 - p_\omega/p}=(\hat{p})^*\ge 1.
$$
The case (a) of the theorem is proved.

\textbf{Proof of Case (b)}. Note that the only difference in this case is the treatment of the $g^\alpha$ part. More precisely, here we only need to deal with \eqref{eqn-200604-0843-2} with $g^\alpha \in L_{q,\omega^a}$, where $q=dp/(d+p-1)$ and $a=(d-1)/(d+p-1)$. The proof is by duality. By Lemma \ref{lem-200615-1006}, $\omega_1:=\omega^{-p'/p}\in A_{p'}$. Using the result in (a) and Remark \ref{rem2.6} (b), for any function $h_i^\alpha,\tilde g^\alpha\in L_{p',\omega_1}$, there exists a unique solution $v_1^\alpha$ to the adjoint equation of \eqref{eqn-200622-1036}-\eqref{eqn-200527-1002} with $h_i^\alpha$ and $\tilde g^\alpha$ in place of $f_i^\alpha$ and $g^\alpha$. Furthermore, such solution satisfies
\begin{equation}\label{eqn-200624-0627-2}
	\sum_\alpha\norm{v_1^\alpha}_{W^1_{p',\omega_1}}\leq N \sum_{i,\alpha}\norm{h_i^\alpha} _{L_{p',\omega_1}}
+N\sum_{\alpha}\norm{\tilde g^\alpha} _{L_{p',\omega_1}}.
\end{equation}
In the following, the $\sup$ is always taken in the set
$$
\big\{\norm{h_i^\alpha}_{L_{p',\omega_1}}=\norm{\tilde g^\alpha}_{L_{p',\omega_1}}=1\big\},
$$
which is omitted in the computation.
Now, for any weak solution $\vu^2:=(u_2^\alpha)_\alpha$ to \eqref{eqn-200604-0843-2}, we have
\begin{align}
\norm{\vu_2}_{L_{p,\omega}} +
\norm{D\vu_2}_{L_{p,\omega}} =&\sup_{\norm{h_i^\alpha}_{L_{p',\omega_1}}=\norm{\tilde g^\alpha}_{L_{p',\omega_1}}=1}\left|\int_\Omega h_i^\alpha D_i u_2^\alpha-\tilde g^\alpha u_2^\alpha\right| \nonumber\\
		=& \sup_{h_i^\alpha,\tilde{g}^\alpha}\left|\int_\Omega a_{ij}^{\alpha\beta}D_j v_1^\beta D_i u_2^\alpha + \int_{\p\Omega}\gamma^{\alpha\beta}v_1^\beta u_2^\alpha \right| \nonumber\\
		=& \sup_{h_i^\alpha,\tilde{g}^\alpha}\left|\int_\Omega \vec{v}_1 \cdot \vec{g}\right| = \sup_{h_i^\alpha,\tilde{g}^\alpha}\left|\int_\Omega \vec{v}_1 \omega^{-a/q} \cdot\vec{g} \omega^{a/q}\right|\nonumber\\
		\leq& \sup_{h_i^\alpha,\tilde{g}^\alpha} N \norm{\vec{g}}_{L_{q,\omega^a}} \norm{\vec{v}_1}_{L_{q',\omega^{-aq'/q}}} \nonumber\\
		\leq& \sup_{h_i^\alpha,\tilde{g}^\alpha} N \norm{\vec{g}}_{L_{q,\omega^a}}\norm{\vec{v}_1}_{W^1_{p',\omega_1}}\label{eqn-200624-0619}\\
		\leq& \sup_{h_i^\alpha,\tilde{g}^\alpha} N \norm{\vec{g}}_{L_{q,\omega^a}}\Big(\norm{h_i^\alpha} _{L_{p',\omega_1}}+\norm{\tilde g^\alpha} _{L_{p',\omega_1}}\Big)\label{eqn-200624-0627}\\
		=& N \norm{\vec{g}}_{L_{q,\omega^a}}\notag.
\end{align}
Here, in \eqref{eqn-200624-0619} we used the embedding in Lemma \ref{lem-200624-0616} and H\"older's inequality, noting that
$$
q'=\frac{dp}{(d-1)(p-1)}=
\frac{dp'}{d-1}\quad\text{and}\quad \frac{aq'}{q} = \frac{p'}{p}.
$$
The estimate in \eqref{eqn-200624-0627} follows from \eqref{eqn-200624-0627-2}. This gives us the a priori estimate for the case (b). The solvability can be obtained by approximation. By the first embedding in Lemma \ref{lem-200624-0616} (a), we can choose a sufficiently large  $\bar{q}$ such that
\begin{equation}
                            \label{eq1.52}
L_{\bar q}(\Omega)\subset L_{p,\omega}(\Omega)\cap L_{q,\omega^a}(\Omega).
\end{equation}
Now we take a sequence of functions $\{f_{k,i}^\alpha\}_{k=1}^\infty \subset L_{\bar{q}}(\Omega)$ and $\{g_k^\alpha\}_{k=1}^\infty\subset L_{\bar{q}}$ such that
as $k\to \infty$,
\begin{equation}
                \label{eq1.54}
f_{k,i}^\alpha\to f_i^\alpha\quad\text{in}\,\,L_{p,\omega}(\Omega),
\quad g_k^\alpha\to g^\alpha\quad\text{in}\,\,L_{q,\omega^a}(\Omega).
\end{equation}
By the unweighted $W^1_{\bar{q}}$ solvability, we can find the corresponding solutions $\vu_k$. Using \eqref{eq1.52}, the weighted a priori estimate proved above, and \eqref{eq1.54}, we know that $\{\vu_k\}$ is a Cauchy sequence in $W^1_{p,\omega}$. Upon passing to the limit in the weak formulation of the equation, we see that the limiting function $\vu\in W^1_{p,\omega}$ is the solution of the original problem.  Hence, the theorem is proved.

We would like to mention that, here we used the $W^1_p$ estimate for conormal problems with two exponents $p=p_1$ and $p=p_3$ which depend on the properties of the weight $\omega$. The small parameter $\theta_0$ in the assumptions should be chosen accordingly.
%\subsection{Proof of Theorem \ref{thm-200527-1035}}
%
%Now suppose that $\omega\in A_p$. For simplicity, we assume $g\equiv 0$. Take a sufficiently small $p_0>1$ such that $\omega\in A_{p/p_0}$ and a sufficiently large $p_1<\infty$ to be specified.
%
%
%
%With the above two estimates, I think the weighted estimate follows from the modified level set argument by taking $p_1$ sufficiently large so that $p<p_1\delta$, where $\delta\in (0,1)$ is the exponent in the inequality
%$$
%{\omega(A)}/{\omega(C)}\le N(|A|/|C|)^\delta
%$$
%for any cube $C$ and $A\subset C$. {\color{blue}(The proof should also work for elliptic systems.)}
%
%This would improve a recent result in \cite{YCYY20} for semi-convex (or $C^1$) domains and scalar equations with symmetric and VMO coefficients.

\section{Scalar equations}
                                \label{sec5}
In this section, we prove Theorems \ref{thm-200617-1154}, \ref{thm-200618-1200}, and \ref{thm-200622-1138} for scalar equations. Here the only difference lies in the counterpart of Proposition \ref{prop-200611-0515} which gives a decomposition of solutions at small scales. In this section, we will prove such decomposition in Proposition \ref{prop-200625-0705}. Once we have Proposition \ref{prop-200625-0705}, all the three theorems can be proved in the same way as in the system case: actually the condition that $\p\Omega$ is locally close to a hyperplane/convex domain is only used in constructing the decomposition. For all the rest steps, we only require the boundary to be Lipschitz.

More precisely, compared to Proposition \ref{prop-200611-0515} for the system case, here in Proposition \ref{prop-200625-0705}, the estimate for the regular part $V$ is different. In the system case, the regular part satisfies the homogeneous conormal boundary condition on a hyperplane, which led to a Lipschitz estimate. While for the scalar case, it satisfies the homogeneous conormal boundary condition on part of the boundary of a convex domain. A global Lipschitz estimate for the Poisson equation was first established in \cite{MR2576900}. Here, we refer to a local version in \cite{MR3168044}, which was proved for the more general $p$-Laplacian equations.

\begin{proposition}\label{prop-200625-0705}
Suppose that $\Omega$ is a bounded domain, $q\in (1,\infty)$, and Assumptions \ref{ass-200617-1149} $(\theta)$ and \ref{ass-200519-1052} $(\theta)$ are satisfied with some $\theta\in (0,1)$. Furthermore, let $u\in \dot{L}^1_q(\Omega)$ be a weak solution to \eqref{eq11.20}-\eqref{eqn-200617-1155} with $f_i\in L_q(\Omega)$ and $g = \varphi = 0$. Then for any $\tilde{p}\in(1,q)$, $r<R_0$, and $x\in\overline{\Omega}$, we can find non-negative functions $V, W\in L_{\tilde{p}}(\Omega_{r/4\sqrt{d}M}(x))$ satisfying $|Du|\leq V + W$,
	\begin{equation}\label{eqn-200625-0706}
	(W^{\tilde{p}})^{1/\tilde{p}}_{\Omega_{r/(4\sqrt{d}M)}(x)} \le N \theta^{1/\tilde{p} - 1/q}(|Du|^{q})^{1/q}_{\Omega_r(x)} + N (F^{\tilde{p}})^{1/\tilde{p}}_{\Omega_{r}(x)},
	\end{equation}
	and
	\begin{equation}\label{eqn-200625-0707}
	\norm{V}_{L_\infty(\Omega_{r/(8\sqrt{d}M)}(x))} \le N (|Du|^{q})^{1/q}_{\Omega_r(x)} + N(F^{\tilde{p}})^{1/\tilde{p}}_{\Omega_{r}(x)},
	\end{equation}
where $F=\sum_i|f_i|$ and $N=N(d,\kappa,M,q,\tilde p)>0$.
\end{proposition}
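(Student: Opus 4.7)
The plan is to follow the three-case structure of Proposition~\ref{prop-200611-0515}. The interior case $B_{cr/4}(x)\subset\Omega$ with $c = 1/(\sqrt d M)$ reduces to the standard frozen-coefficient procedure from \cite[Lemma~8.3]{DK11b}, and the near-boundary case $B_{cr/4}(x)\cap\p\Omega\neq\emptyset$ but $x\notin\p\Omega$ reduces to the boundary case by picking a boundary point within distance $cr/4$ and adjusting radii. The real work is the boundary case $x\in\p\Omega$, where the reflection across a hyperplane used in the system proof is replaced by a direct comparison on a convex subdomain provided by Assumption~\ref{ass-200519-1052}.

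Place the coordinate system of Assumption~\ref{ass-200519-1052} so that $x=0$. Since the scalar leading coefficients are symmetric and of full small BMO, Assumption~\ref{ass-200617-1149} lets us freeze them to the constant symmetric matrix $\bar a_{ij}:=(a_{ij})_{\Omega_{cr}}$, so that $u$ on $\Omega\cap B_{cr}$ satisfies $D_i(\bar a_{ij}D_j u)=D_i\bar f_i$ with $\bar f_i:=f_i+(\bar a_{ij}-a_{ij})D_j u$, together with the homogeneous conormal boundary condition on $\p\Omega\cap B_{cr}$. The set $D:=\Omega^+_{cr}$ is convex, being the intersection of the supergraph of the convex function $\psi$ with the convex cylinder $B'_{cr}\times(-\infty,2Mcr)$, it lies inside $\Omega$, and the mismatch $\Omega\cap B_{cr}\setminus D=\Omega^-_{cr}$ has measure at most $\theta|B_{cr}|$. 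Because $D$ is Lipschitz, there is a bounded extension operator $\mathcal{E}\colon W^1_{\tilde p'}(D)\to W^1_{\tilde p'}(\bR^d)$; plugging test functions of the form $\mathcal{E}\phi$ (with $\phi$ smooth on $\bar D$ and supported away from the lateral boundary of $B_{cr}$) into the original weak formulation and carrying out the computation in the spirit of \cite[Lemma~4.2]{MR3013054} gives
\begin{equation*}
\int_D \bar a_{ij}D_j u\,D_i\phi=\int_D \bar f_i D_i\phi+\int_{\Omega^-_{cr}}\bigl(\bar f_i-\bar a_{ij}D_j u\bigr)D_i\mathcal{E}\phi.
\end{equation*}
Thus $u|_D$ is a weak solution of $D_i(\bar a_{ij}D_j u)=D_i\tilde f_i$ on $D$ with the homogeneous conormal condition on $\p D\cap\{y_d=\psi(y')\}$, where $\tilde f_i$ is a source absorbing both the BMO error $(\bar a_{ij}-a_{ij})D_j u$ and the $\Omega^-_{cr}$-mismatch; by Assumptions~\ref{ass-200617-1149} and~\ref{ass-200519-1052} together with H\"older's inequality, $(|\tilde f_i|^{\tilde p})^{1/\tilde p}_D\le N(F^{\tilde p})^{1/\tilde p}_{\Omega_r}+N\theta^{1/\tilde p-1/q}(|Du|^q)^{1/q}_{\Omega_r}$, exactly as in \eqref{eqn-200611-0448}--\eqref{eqn-200715-1236}.

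The decomposition $u=v+w$ then follows the system blueprint. I embed $D$ in a slightly larger smooth bounded domain $\tilde D$, extend $\tilde f_i$ by zero, and solve a constant-coefficient Dirichlet problem for $w\in W^1_{\tilde p}(\tilde D)$ using \cite[Theorem~8.6(iii)]{DK11b}, producing the estimate \eqref{eqn-200625-0706} after setting $W:=|Dw|$ on $\Omega^+_{cr}$ and $W:=|Du|$ on $\Omega^-_{cr}$. The comparison function $v:=u-w$ satisfies $D_i(\bar a_{ij}D_j v)=0$ on $D$ with homogeneous conormal condition on the convex graph portion of $\p D$; under the linear change of variables diagonalizing $\bar a_{ij}$, which preserves convexity, $v$ becomes harmonic on a convex domain with homogeneous Neumann BC on a convex boundary piece, so the local boundary Lipschitz estimate of \cite{MR3168044} (specialized from the $p$-Laplacian setting to $p=2$) applies up to the graph and gives $\|Dv\|_{L_\infty(\Omega^+_{cr/2})}\le N(|Dv|^{\tilde p})^{1/\tilde p}_D$, after the standard $L_2$-to-$L_{\tilde p}$ iteration used between \eqref{eqn-200714-1113} and \eqref{eqn-200715-1202}. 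Setting $V:=|Dv|$ on $\Omega^+_{cr}$ and $V:=0$ on $\Omega^-_{cr}$ and combining with the $W$ bound yields \eqref{eqn-200625-0707}. The main obstacle is the boundary bookkeeping: without the clean reflection symmetry of the hyperplane case, the $\Omega^-_{cr}$-mismatch must be routed through the Lipschitz extension operator $\mathcal{E}$ and controlled solely by the small-measure hypothesis, and one must verify that the cited Lipschitz estimate in \cite{MR3168044} is indeed a local boundary estimate valid up to the convex portion of $\p D$ rather than only on interior subdomains.
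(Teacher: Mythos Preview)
Your outline tracks the paper closely through the three-case structure, the freezing of coefficients, and the final appeal to the local Lipschitz estimate of \cite{MR3168044} for the comparison function $v$. However, there is a genuine gap in your construction of $w$.

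You propose to embed $D=\Omega^+_{cr}$ in a larger smooth domain $\tilde D$, extend $\tilde f_i$ by zero, and solve a constant-coefficient \emph{Dirichlet} problem on $\tilde D$ via \cite[Theorem~8.6(iii)]{DK11b}. In the system case (Proposition~\ref{prop-200611-0515}) this trick works only because $\tilde B$ is chosen symmetric about the hyperplane $\{y_d=0\}$ and the coefficients are extended with the specific odd/even parity of \eqref{eqn-200702-0626}; this symmetry forces the Dirichlet solution to be even in $y_d$, which in turn yields the conormal condition on the flat piece $\Gamma_{cr}$. When $\Gamma_{cr}$ is the graph of a convex function $\psi$, no such symmetry is available: a Dirichlet solution on $\tilde D$ carries no information about its conormal derivative on the interior surface $\Gamma_{cr}$. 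Consequently $v=u-w$ will not satisfy the homogeneous conormal condition on $\Gamma_{cr}$, and you cannot invoke \cite{MR3168044}.

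The paper handles this by a different device: it solves the \emph{conormal} problem for $w$ directly on the convex domain $\Omega^+_{cr}$ (with conormal data on all of $\partial\Omega^+_{cr}$), appealing to the $W^1_{\tilde p}$ solvability of the Neumann problem on convex domains from \cite{GS10}. This is the key ingredient that replaces the symmetric Dirichlet extension of the system case. A secondary difference is that, to put the problem in the right form on $\Omega^+_{cr}$, the paper uses the explicit reflection $T(x)=(x',2\psi(x')-x_d)$ across the convex graph rather than a generic Lipschitz extension operator; this keeps the error term $\tilde f_i$ defined on $D$ itself and makes the small-measure estimate of $\Omega^-_{cr}$ transparent. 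Your extension-operator route produces a boundary functional $\phi\mapsto\int_{\Omega^-_{cr}}(\cdots)D_i\mathcal E\phi$ that is not manifestly of the form $\int_D\tilde f_iD_i\phi$, so even the derivation of the equation for $u$ on $D$ needs more care.
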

\begin{proof}
As before, we only prove the case when $x\in\p\Omega$, where the coordinate system is taken as in Assumption \ref{ass-200519-1052}. We denote
\begin{equation*}
	c=\frac{1}{\sqrt{d}M}\quad\text{and}\quad  \Gamma_{cr}:=\p\Omega_{cr}^{+}\cap\{y^d=\psi(y')\}.
\end{equation*}
The proof follows similar steps with that of Proposition \ref{prop-200611-0515}. Indeed the ``reflection'' argument there can be adapted to this case.  We define the reflection with respect to the surface $\{x',\psi(x')\}$ by
$$
T(x)=(x',2\psi(x')-x_d).
$$
Clearly, we have $T^2=I$ and $\det(DT)=1$.
Then a direct calculation reveals that if $u$ is a weak solution to \eqref{eq11.20}-\eqref{eqn-200617-1155}, then it satisfies
\begin{equation*}
%\label{eq11.20b}
D_i(a_{ij}D_j u) = D_i \tilde f_i
\end{equation*}
in $\widetilde\Omega_r$ with the conormal boundary condition on $\{x:\ x'\in B_r',\ x_d=\psi(x')\}$,
where
\begin{align*}
	\tilde f_i&=f_i+1_{T(x)\in \Omega^-_{cr}}
	\big(f_i-a_{ij}D_j u \big)\circ T(x),
	\quad i=1,\ldots,d-1,\\
	\tilde f_d&=f_d-1_{\Omega^-_{cr}}\big(f_d-a_{dj}D_ju\big)\circ T(x) \\
	&\quad +1_{T(x)\in \Omega^-_{cr}}\sum_{j=1}^{d-1}2D_i\psi(x')\big(f_i-a_{ij}D_j u\big)\circ T(x).
\end{align*}
Note that for this step, we only require $\psi$ to be Lipschitz.

Now we construct the decomposition explicitly. We freeze the coefficients
$\bar{a}_{ij} := (a_{ij})_{B_r}$.
Let $w$ be the weak solution to
\begin{equation*}
	D_i(\bar{a}_{ij}D_j w) = D_i \tilde f_i + D_i((\bar{a}_{ij}-a_{ij})D_j u)\quad \text{in}\,\,\Omega_{cr}^+
\end{equation*}
with the conormal boundary condition on $\p\Omega_{cr^{+}}$. Since $\Omega_{cr}^+$ is convex, according to \cite{GS10}, such $w$ exists and satisfies
\begin{equation*}
	\norm{Dw}_{L_{\tilde{p}}(\Omega_{cr}^+)} \leq N(d,p,M) \norm{\tilde f_i + (\bar{a}_{ij}-a_{ij})D_j u}_{L_{\tilde{p}}(\Omega_{cr}^+)}.
\end{equation*}
Now define $W := |Dw|\mathbb{I}_{\Omega_{cr}^+} + |Du|\mathbb{I}_{\Omega_{cr}^-}$. Combining this and H\"older's inequality (cf. \eqref{eqn-200611-0448} and \eqref{eqn-200715-1236}), we have
\begin{equation*}
	(W^{\tilde{p}})^{1/\tilde{p}}_{\Omega_{cr}} \leq N (|f_i|^{\tilde{p}})^{1/\tilde{p}}_{\Omega_r} + N\theta^{1/\tilde{p}-1/q}(|Du|^q)^{1/q}_{\Omega_r}.
\end{equation*}
Now $v:=u-w$ satisfies
\begin{equation*}
	D_i(\bar{a}_{ij}D_j v) = 0 \quad \text{in}\,\,\Omega_{cr}^+
\end{equation*}
with the conormal boundary condition on $\Gamma_{cr}$. Noting that since $\bar{a}_{ij}$ is symmetric, by a linear transformation, $v$ is harmonic satisfying the zero Neumann boundary condition on the curved boundary which is still convex. Now we denote $V:= Dv\mathbb{I}_{\Omega_{cr}^{+}}$. By \cite[Theorem~1.1]{MR3168044} with $p=2$ (i.e., the linear case) together with a rescaling, covering, and iteration argument, we have
\begin{equation*}
\norm{V}_{L_\infty(\Omega_{cr/2}(x))} \leq N (V^{\tilde{p}})^{1/\tilde{p}}_{\Omega_{cr}(x)}.
\end{equation*}
Using \eqref{eqn-200625-0706} and the definition of $V$, we obtain
\begin{equation*}
	\begin{split}
	\norm{V}_{L_\infty(\Omega_{cr/2}(x))} %&\leq N (V^{\tilde{p}})^{1/\tilde{p}}_{\Omega_{r/(4\sqrt{d}M)}(x)}\\
	&\leq N (|Du|^{\tilde{p}})^{1/\tilde{p}}_{\Omega_{cr}(x)} + N(|Dw|^{\tilde{p}})^{1/\tilde{p}}_{\Omega_{cr}(x)}\\
	&\leq N (|Du|^{q})^{1/q}_{\Omega_r(x)} + N(F^{\tilde{p}})^{1/\tilde{p}}_{\Omega_{r}(x)},
	\end{split}
\end{equation*}
which gives \eqref{eqn-200625-0707}.
The proposition is proved.
\end{proof}

\begin{remark}
From the proof above, it is easily seen that instead of assuming $(a_{ij})$ to be symmetric, we only need to assume that $(a_{ij})$ is locally close to some constant symmetric matrix in the sense of the $L_1$ average.
\end{remark}

\end{document}